\DeclareMathAlphabet{\pazocal}{OMS}{zplm}{m}{n}
\newcommand*\dotp{\mathpalette\dotp@{.5}}
\newcommand*\dotp@[2]{\mathbin{\vcenter{\hbox{\scalebox{#2}{$\m@th#1\bullet$}}}}}
\newtheorem{theorem}{Theorem}[subsection]
\newtheorem{corollary}[theorem]{Corollary}
\newtheorem{lemma}[theorem]{Lemma}  
\newtheorem{proposition}[theorem]{Proposition}
\theoremstyle{definition}
\newtheorem{definition}[theorem]{Definition}
\theoremstyle{remark}
\newtheorem{remark}[theorem]{Remark}
\numberwithin{equation}{subsection}
\newcommand*\bigcdot{\mathpalette\bigcdot@{.5}}
\newcommand*\bigcdot@[2]{\mathbin{\vcenter{\hbox{\scalebox{#2}{$\m@th#1\bullet$}}}}}
\title{Infinitesimal dilogarithm   on curves over truncated polynomial rings}
\author{S\.{I}nan \"{U}nver}
\address{Ko\c{c} University, Mathematics Department. Rumelifeneri Yolu, 34450, Istanbul, Turkey}
\email{sunver@ku.edu.tr}
 \subjclass[2010]{19E15, 14C25}
\begin{document}
\maketitle
\noindent

\begin{abstract}
Let  $C$ be a smooth and projective curve over the truncated polynomial ring $k_m:=k[t]/(t^m), $ where $k$ is a field of characteristic 0. Using a candidate for the motivic cohomology group ${\rm H}^{3}_{\pazocal{M}}(C,\mathbb{Q}(3))$ based on the Bloch complex of weight 3,  we construct regulators  to $k$ for every $m<r<2m.$ 
Specializing this  construction, we obtain an invariant $\rho_{m,r}(f \wedge g \wedge h)$ of   rational functions $f,$ $g$ and $h$  on $C.$ The current work is a twofold generalization of our work on the infinitesimal Chow dilogarithm: we sheafify the previous construction and therefore do not restrict ourselves  to triples of rational functions and  we  construct the regulator for any $m<r<2m,$ rather than only for $m=2.$   We also define  regulators of cycles, which we expect to give a complete set of invariants for the infinitesimal part of  ${\rm CH}^{2}(k_{m},3). $ This generalizes Park's work, where the additive Chow cycles, namely the case of cycles close to 0, is handled for $r=m+1.$ In this paper, we generalize the reciprocity theorem  to pairs of cycles which are the same modulo $(t^m)$ and for any $m<r<2m.$ We expect the theory of the paper to give regulators on categories of motives over rings with nilpotents. 
 
 \end{abstract}

\section{Introduction}

In this paper, we continue our project started in \cite{unv3} and followed up  in \cite{unv4}, which aims to give infinitesimal analogs of  real analytic regulators. We  first briefly explain the classical analog of our construction and then continue with  explaining the contents of the paper. 

If $X/\mathbb{C}$ is a smooth projective curve, then by the conjectural Leray-Serre spectral sequence for motivic sheaves one would expect a map 
$$
K_{3}(X)^{(3)} _{\mathbb{Q}}={\rm H}^{3} _{\pazocal{M}}(X,\mathbb{Q}(3)) \to {\rm H}^{1} _{\pazocal{M}}(\mathbb{C},\mathbb{Q}(2))=K_{3}(\mathbb{C})^{(2)} _{\mathbb{Q}}. 
$$
Composing with  the Borel regulator $K_{3}(\mathbb{C})^{(2)} _{\mathbb{Q}} \to \mathbb{C}/(2\pi i)^{2}\mathbb{Q}$ and taking the imaginary part would give a map $K_{3}(X)^{(3)} _{\mathbb{Q}} \to \mathbb{R}.$ Up to normalization, this map can be constructed as follows \cite[\textsection 6]{arak}. 
For  $f_{1},\, f_2, \,$ and $f_3 \in \mathbb{C}(X) ^{\times},$ let  
\begin{eqnarray*}
    r_{2}(f_{1},f_{2},f_{3}):=   
 {\rm Alt}_{3}(\frac{1}{6} \log |f_1| \cdot d \log |f_{2}| \wedge d \log |f_{3}| -\frac{1}{2}\log |f_1|\cdot d \arg f_{2} \wedge d \arg f_{3} ),
\end{eqnarray*}
(s.t. $dr_{2}(f_1,f_2,f_3) =Re(d \log (f_1)\wedge d \log (f_2) \wedge d \log (f_3))$). The Chow dilogarithm map 
$
\rho: \Lambda^{3} \mathbb{C}(X) ^{\times} \to \mathbb{R}
$ is given in terms of this by 
\begin{eqnarray*}
\rho(f_1 \wedge f_2 \wedge f_3) := \int_{X(\mathbb{C})} r_{2}(f_{1},f_{2},f_{3}).
\end{eqnarray*}

In the special case when $X=\mathbb{P}^1,$ we have  $\rho(1-z,z,z-a)=D_{2}(a),$ where  $D_{2}(z):={\rm Im}(\ell i _{2}(z))+{\rm arg}(1-z)\cdot \log (|z|)$ is the Bloch-Wigner dilogarithm, with $\ell i_{2} (z)$  the (multi-valued) analytic continuation of $\sum _{1 \leq n} \frac{z^n}{n^2}.$   

The middle cohomology of 
\begin{eqnarray}\label{b3complex}
 \to B_{2}(\mathbb{C}(X)) \otimes \mathbb{C}(X) ^{\times } _{\mathbb{Q}} \to  (\oplus _{x \in X} B_{2}(\mathbb{C})) \oplus \Lambda ^3 \mathbb{C}(X) ^{\times} _{\mathbb{Q}} \to \oplus _{x \in X} \Lambda ^2 \mathbb{C} ^{\times}  _{\mathbb{Q}}\to 
\end{eqnarray}
is ${\rm H}^{3} _{\mathcal{M}}(X,\mathbb{Q}(3))\simeq K_{3}(X)_{\mathbb{Q}}^{(3)}.$ Combining $D_{2}$ and $\rho$, if we let 
$$
\rho_{X}:=-(\oplus _{x \in X} D_{2}) \oplus \rho: (\oplus _{x \in X} B_{2}(\mathbb{C})) \oplus \Lambda ^3 \mathbb{C}(X) ^{\times} _{\mathbb{Q}} \to \mathbb{R},  
$$ then $\rho_{X}$ 
vanishes on the image of $ B_{2}(\mathbb{C}(X)) \otimes  \mathbb{C}(X) ^{\times } _{\mathbb{Q}}$ and  induces the map $K_{3}(X)_{\mathbb{Q}} ^{(3)} \to \mathbb{R},$ we were looking for above.  If one assumes a theory of motivic sheaves then this is the composition  of ${\rm H}^{3} _{\mathcal{M}}(X,\mathbb{Q}(3))\to {\rm Ext}^{1} _{\mathcal{M}_{\mathbb{C}}}(\mathbb{Q}(0),{\rm H}^{2}(X/\mathbb{C})(3) )= {\rm H}^1 _{\mathcal{M}}(\mathbb{C},\mathbb{Q}(2))\to B_{2}(\mathbb{C})$ and the Bloch-Wigner dilogarithm $D_{2}:B_{2}(\mathbb{C})\to \mathbb{R}.$ 

We will be interested in the case where $C/k_{m}$ is a smooth and projective curve. We denote the underlying reduced scheme of $C$ by  $\underline{C}.$  In order to state our result in the most general setting, we need an analog of the complex (\ref{b3complex}). This construction will be based on a choice $\mathcal{P}$ of {\it smooth liftings} of the closed points $|C|$ of $\underline{C}.$ For different choices of liftings, we expect the complexes to be isomorphic in the derived category of complexes of sheaves. We will consider sheaves of functions on $C$ which will satisfy certain regularity conditions with respect to $\mathfrak{c} \in \mathcal{P}.$ We will call such a function {\it good} with respect to $\mathfrak{c}.$   Imposing this condition will allow us to define the residue of such a function along  $\mathfrak{c}.$

%By a smooth lifting $\mathfrak{c}$ of $c\in |C|,$ we mean a closed subscheme of $C$ whose underlying topological space agrees with $c,$ and whose sheaf of ideals in $\pazocal{O}_{C,c}$ is generated by a single element which reduces to a uniformizer in $\pazocal{O}_{\underline{C},c}.$ For each $c \in |C|,$ we fix such a lifting $\mathfrak{c}$ and we let $\mathcal{P}$ denote the set of these liftings. 

For each $2\leq m<r<2m,$ our  regulator will be induced from the  corresponding map  on the degree 3 cohomology of the following two term complex of sheaves:
$$
 B_{2}(\pazocal{O}_C, \underline{\mathcal{P}}) \otimes (\pazocal{O}_{C},\underline{\mathcal{P}}) ^{\times}\to \oplus _{c \in |C|}i_{c*}(B_{2}(k(\mathfrak{c}))) \oplus \Lambda ^3 (\pazocal{O}_{C},\underline{\mathcal{P}}) ^{\times}
$$
concentrated in $[2,3].$ In this  complex, $(\pazocal{O}_{C},\underline{\mathcal{P}}) ^{\times}$ is the sheaf whose sections on an open set $U$ are those elements of $\pazocal{O}_{C,\eta} ^{\times}$ which are $\mathfrak{c}$-good for $c \in \underline{U};$  $ B_{2}(\pazocal{O}_C, \underline{\mathcal{P}})$ is the sheaf associated to the presheaf whose sections on $U$ are elements of the Bloch group on $U$ which are $\mathfrak{c}$-good for $c \in \underline{U}.$     This construction is the precise analog of the construction of $\rho_{X}$ in the complex setting. Let us explain this below.

 Suppose that we are given a Zariski open cover $\{U_{i}\}_{i \in I}$ of $\underline{C}$ and a corresponding  cocyle $\gamma,$ given by the following data:   $\gamma_{i} \in \Lambda ^3 (\pazocal{O}_{C},\underline{\mathcal{P}}) ^{\times}(U_i);$ $\varepsilon_{i,c} \in B_{2}(k(\mathfrak{c})) $ for every $c \in U_{i};$ and  
 $\beta_{ij} \in  (B_{2}(\pazocal{O}_C, \underline{\mathcal{P}}) \otimes (\pazocal{O}_{C},\underline{\mathcal{P}}) ^{\times})(U_{ij}).$  We will define $\rho_{m,r}(\gamma) \in k, $ by first making many choices and then showing that the construction is independent of all the choices. 

(i) Let $\tilde{\pazocal{A}}_{\eta}/k_{\infty}:=k[[t]]$ be a smooth lifting of $\pazocal{O}_{C,\eta}$ and for every $c \in |C|,$ let $\tilde{\pazocal{A}}_{c}/k_{\infty}$ be a smooth lifting of the completion $\hat{\pazocal{O}}_{C,c}$ of the local ring of $C$ at $c,$ together with a smooth lifting $\tilde{\mathfrak{c}}$ of $\mathfrak{c}.$

(ii) Let an  $i \in I$ be arbitrary and for each $c$ choose a $j_{c} \in I$ such that $c \in U_{j_c}$ 

(iii) Choose an arbitrary lifting $\tilde{\gamma}_{i\eta} \in \Lambda^3 \tilde{\pazocal{A}}_{\eta} ^{\times}$  of the germ $\gamma _{i\eta} \in \Lambda ^3\pazocal{O}_{C,\eta}^{\times}$   

(iii) Choose a good lifting $\tilde{\gamma}_{j_c} \in \Lambda^3(\tilde{\pazocal{A}}_c,\tilde{\mathfrak{c}})^{\times}$  of the image $\hat{\gamma}_{j_c,c}$ of  $\gamma_{j_c}$ in $\Lambda^{3}(\hat{\pazocal{O}}_{C,c},\mathfrak{c})^{\times},$ for every $c \in |C|,$  

(iv) Choose an arbitrary lifting   $\tilde{\beta}_{j_c i,\eta} \in B_{2}(\tilde{\pazocal{A}}_{\eta})\otimes \tilde{\pazocal{A}}_{\eta}$  of the image  $\beta_{j_c i,\eta}\in B_{2}(\pazocal{O}_{C,\eta})\otimes \pazocal{O}_{C,\eta} ^{\times}$ of   $\beta_{j_c i},$ for every $c \in |C|.$

We then  define the value of the regulator $\rho_{m,r}$ on the above element by the expression
\begin{align}\label{firstformrho}
\rho_{m,r}(\gamma):=   \sum _{c \in |C|}{\rm Tr}_{k}\big(\ell _{m,r}(res_{\tilde{\mathfrak{c}}}\tilde{\gamma}_{j_c})-\ell i _{m,r} (\varepsilon_{j_c,c})+res_{c}\omega_{m,r}(\tilde{\gamma}_{i\eta}-\delta(\tilde{\beta}_{j_ci,\eta}),\tilde{\gamma}_{j_c})\big). 
\end{align}
We continue with the description of this expression. 

The starting point of this paper is our construction of the additive dilogarithm in \cite{unv1}. For a regular local $\mathbb{Q}$-algebra $R,$ letting $R_{m}:= R[t]/(t^m),$ for every $2\leq m <r<2m,$ we have an additive dilogarithm map $\ell i_{m,r}:B_{2}(R_{m}) \to R$ that satisfies all the analogous properties of the Bloch-Wigner dilogarithm function. Most importantly, the direct sums of these maps over all the possible $r$'s give an isomorphism between the infinitesimal part of the $K$-group $K_{3}(R_{m})^{(2)} _{\mathbb{Q}}$ and $\oplus _{m<r<2m}R.$ We  explain this in detail in \textsection 2 and give explicit formulas for these functions $\ell i_{m,r}.$ The function $\ell i_{m,r}$ can also be described in terms of the differential $\delta$ in the Bloch complex of $B_{2}(R_{\infty}),$ with $R_{\infty}:=R[[t]],$ by the following   commutative diagram 
$$
\xymatrix{
B_{2}(R_{\infty}) \ar^{\delta}[r] \ar[d] &\Lambda ^{2}R_{\infty}^{\times} \ar^{\ell _{m,r}}[d]\\
B_{2}(R_{m})\ar^{\ell i_{m,r}}[r] & R,
}
$$
where $\ell _{m,r}$ is given explicitly in Definition 
\ref{defnlmr} below. 

We can then describe the first two terms in (\ref{firstformrho}) as follows. For a connected, \'{e}tale  $k_{m}$-algebra (resp. $k_{\infty}$-algebra) $A,$ there is a canonical isomorphism $A \simeq k'_{m}$ (resp. $A\simeq k'_{\infty}$). Using this isomorphism for $k(\mathfrak{c}),$ we get a canonical identification $B_{2}(k(\mathfrak{c}))=B_{2}(k(c)_{m}).$ Therefore, $\ell i_{m,r}(\varepsilon _{j,c})\in k(c)$  is unambiguously defined using the map 
$\ell i_{m,r}: B_{2}(k(c)_{m})\to k(c).$ Since the element $\tilde{\gamma}_{j_c} \in \Lambda^3(\tilde{\pazocal{A}}_c,\tilde{\mathfrak{c}})^{\times}$ is assumed to be $\tilde{\mathfrak{c}}$-good, the residue $res_{\tilde{\mathfrak{c}}}\tilde{\gamma}_{j_c}$ is defined as an element of $\Lambda^{2}k(\tilde{\mathfrak{c}})^{\times}$  in the beginning of  \textsection 6. Using the identification $\Lambda^2k(\tilde{\mathfrak{c}})^{\times}=\Lambda^2 k(c)_{\infty}^{\times}$ and the map $\ell_{m,r}: \Lambda ^{2}k(c)_{\infty} ^{\times} \to k(c), $ we define the element $\ell _{m,r}(res_{\tilde{\mathfrak{c}}}\tilde{\gamma}_{j_c}) \in k(c).$   

Defining the last term $res_{c}\omega_{m,r}$ and proving its properties will constitute a large proportion of the paper.  For any local $\mathbb{Q}$-algebra $R,$ we define a map $L_{m,r}:B_{2}(R_m) \otimes R_{m} ^{\times} \to \Omega^{1} _{R},$ by an explicit formula in (\ref{eqn lmr}). This should be thought of as an absolute notion and does not require that $R$ be of dimension 1 over a field $k.$   If  $R/k$ is a smooth,  $k$-algebra of  dimension 1, using  $L_{m,r}$ we  construct a map $\omega_{m,r}:\Lambda ^{3}(R_{r},(t^m))^{\times}\to \Omega^{1} _{R/k}.$ Here  $(R_{r},(t^m))^{\times}$ denotes $\{(a,b)| a, \, b \in R_r ^{\times},\; ab^{-1} \in 1+(t^m) \}.$ Since we do not fix a lifting of our curve  in the construction of $\rho_{m,r}, $ defining $\omega_{m,r}$ on this group is not enough. More precisely, we need to extend $\omega_{m,r}$ to the following context. Suppose that  $\pazocal{R}$ and $\pazocal{R}'$ are smooth of relative dimension 1 over $k_{r}$ together with a fixed isomorphism: 
$$
\chi:\pazocal{R}/(t^m)\to 
\pazocal{R}'/(t^m),
$$
of $k_{m}$-algebras  between their reductions modulo $(t^m).$  Let 
$$(\pazocal{R},\pazocal{R}',\chi)^{\times} :=\{(a,b)| a \in \pazocal{R}^{\times}, \, b \in \pazocal{R}'^{\times},\; \chi(a+(t^m))=b+(t^m)  \, \, {\rm in}\,\, \pazocal{R}'/(t^m)\}.$$
Ideally, we would like to extend the definition of  $\omega_{m,r}$ to a map from $\Lambda ^{3}(\pazocal{R},\pazocal{R}',\chi)^{\times} $ to $\Omega^{1}_{\underline{\pazocal{R}}/k}.$ 
This can be done when $r=m+1$ but it is not true if  $m+1<r.$ 

However, it turns out that for us purposes, we do not need these 1-forms themselves but only their residues and we  can construct these residues independently of all the choices. Suppose that $\pazocal{S}/k_m$ is a smooth algebra of relative dimension 1, with $x$ a closed point and $\eta$ the generic point of its spectrum.    Suppose that $\pazocal{R},\, \pazocal{R}'/k_r$ are liftings of $\pazocal{S}_{\eta}$ to $k_r,$ with $\chi$ the corresponding isomorphism from $\pazocal{R}/(t^m)$ to $\pazocal{R}'/(t^m).$   We  construct a map 
$$
res_{x}\omega_{m,r}:\Lambda ^3 (\pazocal{R},\pazocal{R}',\chi)^{\times} \to k',
$$
where $k'$ is the residue field of $x,$ which is functorial and independent of all the choices. Let $
\tilde{\chi}:\pazocal{R} \to \pazocal{R}'
$
be an isomorphism of $k_r$-algebras which is a lifting of  $\chi.$ Choosing also an isomorphism  $\underline{\pazocal{R}}_r \simeq \pazocal{R}$ of $k_{r}$-algebras,  provides us with an identification
$$
\xymatrix{
(\pazocal{R},\pazocal{R}',\chi)^{\times}  \ar^{\tilde{\chi}^{*}}[r] & (\underline{\pazocal{R}}_r,(t^m))^{\times}}.
$$
Let  $\underline{\psi}$ denote the  isomorphism $\underline{\pazocal{R}}\to \underline{\pazocal{S}}_{\eta}$ induced by the one from $\pazocal{R}/(t^m)$ to $\pazocal{S}_{\eta}.$  Then we define $res_{x}\omega_{m,r}$ by the composition 
\begin{align*}
\xymatrix@R+1pc@C+1pc{
 \Lambda^3 (\pazocal{R},\pazocal{R}',\chi)^{\times} \ar^{\Lambda^3\tilde{\chi}^{*}}[r] & \Lambda^3(\underline{\pazocal{R}}_r,(t^m))^{\times} \ar[r]^{\;\;\;\omega_{m,r}} &\Omega^1 _{\underline{\pazocal{R}}/k} \ar[r]^{d\underline{\psi}}& \Omega^1 _{\underline{\pazocal{S}}_{\eta}/k} \ar^{res_x}[r]& k'.}
\end{align*}
We prove that this composition is independent of all the choices. This statement, together with the construction of the function, takes up the whole of \textsection 4 and 5. Applying this construction in the above context, we see that $\tilde{\gamma}_{i\eta}-\delta(\tilde{\beta}_{j_ci})$ and $\tilde{\gamma}_{j_c}$ are two liftings of the same object $\gamma_{j_c}$ to two different generic liftings of $\hat{\pazocal{O}}_{C,c}.$ Therefore, the expression  
$$
res_{c}\omega_{m,r}(\tilde{\gamma}_{i\eta}-\delta(\tilde{\beta}_{j_ci,\eta}),\tilde{\gamma}_{j_c})\in k(c)$$ is defined. 

Applying traces   and taking the sum over all the closed points, we obtain the expression in (\ref{firstformrho}). Next we show that the sum is in fact a finite sum. The above construction involves many choices and would  be completely useless if it depended on anything other than the initial data. This is the content of Theorem \ref{mainthm}, our main theorem. Because of its basic properties that we prove below, $\rho_{m,r}$ deserves to be called a regulator. 

In  Corollary \ref{cor inf dilog}, we specialize to triples of functions which  gives us the extension of the infinitesimal Chow dilogarithm of \cite{unv1} to   higher modulus. Just as in \cite{unv1}, this construction gives an analog of the strong reciprocity conjecture of Goncharov \cite{arak}. Finally, in the last section we obtain  invariants of cycles in $\underline{z}^{2}_{f}(k_{\infty},3)$ which satisfy a reciprocity property as in Theorem \ref{modulus theorem}. Namely,  if two cycles are the same modulo $(t^m)$ then their invariants $\rho_{m,r}$ are the same. This generalizes Park's theorem \cite{park}, which was proved in the context of additive Chow groups and for $r=m+1.$  After the category  of motives over rings with nilpotents is constructed, we expect these invariants to induce the regulators in this category. 

Finally, let us describe the differences of this paper with \cite{unv1}, where the case $m=2$ was handled.  In \cite{unv1}, since the only possible $r$ is 3 and hence satisfies $r=m+1,$ the map $\omega_{2,3}$ can be defined as a map from $\Lambda ^{3}(\pazocal{R},\pazocal{R}',\chi)^{\times} \to \Omega ^{1} _{\underline{\pazocal{R}}/k}$. This is not true in general and this is  why we have to pursue a different approach in this paper. Also since the formulas get quite complicated for a general $m,$ in this paper we follow a  more conceptual way of constructing $res_{c}\omega_{m,r}.$  We separate the construction into two parts as we described above. First we construct an absolute object $L_{m,r}$ which does not depend on $R$ being of dimension 1. This is done by an explicit computation. Next in order to define $\omega_{m,r}$ in the split case of $\pazocal{R}:=R_{r},$ and with $R/k$ smooth of relative dimension 1, we use  the computation of the Milnor $K$-theory of truncated polynomial rings in order to express an element $\alpha \in  im((1+t^mR_{r})^{\times } \otimes \Lambda^{2}R_{r}^{\times}) \subseteq \Lambda^{3}R_{r}^{\times})$ as a sum of an element of the form $\delta_{r}(\gamma),$ for some $\gamma \in B_{2}(R_{r})\otimes R_{r} ^{\times}$  and another element $\varepsilon \in   im( (1+t^mR_{r})^{\times } \otimes R_{r}^{\times}\otimes k^{\times})\subseteq \Lambda^{3}R_{r}^{\times}.$  The expression of $\varepsilon$ has too constant terms so that its image under $\omega_{m,r}$ should be 0. Therefore, we essentially use $L_{m,r}(\gamma)$ to unambiguously construct $\omega_{m,r}.$ After showing that  this construction is well-defined, we find an explicit description of it. Later in the non-split case we show that the residue of the 1-form can be unambiguously defined. The rest of the proof follows more or less along the same lines as that of \cite{unv1}, except that the details and the computations are more difficult.

We give an outline of the paper. In \textsection 2, we give a review of the construction in \cite{unv1} of the additive dilogarithm on the Bloch group of a truncated polynomial ring.  In \textsection 3, we describe the infinitesimal part of the Milnor $K$-theory of a local $\mathbb{Q}$-algebra endowed with a nilpotent ideal, which is split, in terms of K\"{a}hler differentials. Without any doubt the results in this section are known to the experts and we do not claim originality. The reason for our inclusion of this section is  first that we could not find an easily quotable statement in the full generality which we will need in our later work, and second that we found a short argument which is in line with the general set-up of this paper. 
In \textsection 4, for a regular local $\mathbb{Q}$-algebra $R,$ we define  regulators $B_{2}(R_{m})\otimes R_{m} ^{\times}$ to $\Omega^{1}_{R}$ for every $m<r<2m,$ which vanishes on boundaries. This construction depends on the splitting of $R_m$ in an essential way. In \textsection 5, we introduce the main object of this paper: for a smooth algebra $R$ of relative dimension 1 over $k,$ we define regulators $\omega_{m,r}:\Lambda ^{3}(R_r,(t^m))^{\times} \to \Omega^{1}_{R/k},$ for each  $m<r<2m.$  In \textsection 6, we compute the residues of the value of  $\omega_{m,r}$ on good liftings. In \textsection 7, we use the results of the previous sections to construct the regulator from ${\rm H}^3_{B}(C,\mathbb{Q}(3))$ and  specializing to  triples of rational functions we obtain the infinitesimal Chow dilogarithm of higher modulus. Finally, using the infinitesimal Chow dilogarithm, we construct an invariant of codimension 2 cycles in the 3 dimensional affine space over $k_m.$ 

{\bf Convention.} We are interested in everything modulo torsion. Therefore, {\it we tensor all abelian groups under consideration with $\mathbb{Q}$ without explicitly signifying this in the notation.} For example, $K_{n} ^{M}(A)$ denotes Milnor $K$-theory of $A$ tensored with $\mathbb{Q}$ etc. For an appropriate functor $F,$  we let $F(R_{\infty}) ^{\circ}:=ker (F(R_{\infty}) \to F(R))$ (resp. $F(R_{m}) ^{\circ}:=ker (F(R_{m}) \to F(R))$), denote the infinitesimal part of $F(R_{\infty})$ (resp. $F(R_{m})$).

\section{Additive dilogarithm of higher modulus}

In this section, we review and rephrase the theory of the additive dilogarithm over truncated polynomial rings in a manner which we will need in the remainder of the paper. Further results for this function can be found in \cite{unv1}. 

For a $\mathbb{Q}$-algebra $R,$ let 
 $R_{\infty}:=R[[t]],$ denote the formal power series over $R$  and $R_{m}:=R_{\infty}/(t^m)$ the truncated polynomial ring of modulus $m$ over $R.$ Since $R$ is a $\mathbb{Q}$-algebra we have the  logarithm $\log: (1+tR_{\infty})^{\times } \to R_{\infty}$ given by
 $\log (1+z):=\sum _{1\leq n} (-1)^{n+1}\frac{z^n}{n},$ for $z \in tR_{\infty}.$  Let $\log ^{\circ}: R_{\infty} ^{\times} \to R_{\infty},$ be the branch  of the logarithm associated to  the  splitting of $R_{\infty} \twoheadrightarrow R$ corresponding to the inclusion $R \hookrightarrow R_{\infty},$  defined as $\log ^{\circ}(\alpha):=\log (\frac{\alpha}{\alpha(0)}).$ If $q=\sum _{0\leq i} q_i t^i \in R_{\infty}$ and $1 \leq  a $  then let  $q|_{a}:=\sum _{0\leq i<a} q_i t^i 
  \in R_{\infty},$ denote the truncation of $q$ to the sum of the first $a$-terms, and $t_a(q):=q_a,$ the coefficient of $t^a$ in $q.$ If $u \in tR_{\infty}$ and $s(1-s) \in R^{\times},$ we let    
\begin{eqnarray}\label{adddilogformula}
\ell i_{m,r}(se^{u}):=t_{r-1}(\log^{\circ}(1-se^{u|_m}) \cdot \frac{\partial u}{\partial t}\big| _{r-m})  ,
\end{eqnarray}
for $m < r < 2m.$ Fixing $m\geq 2,$ these $\ell i_{m,r} $'s, for $m<r<2m$ together constitute a regulator for the infinitesimal part of the $K$-group $K_{3}(R_{m})^{(2)}$ exactly analogous to the Bloch-Wigner dilogarithm in the complex case \cite{blo}, \cite{sus}, \cite{unv1}. 

For any ring $A,$ we let $A^{\flat}:=\{a \in A|a(1-a) \in A^{\times} \}.$  Since every element of $R_{\infty} ^{\flat}$ can be written in the form $se^{u}$ as above, we can linearly extend $\ell i _{m,r}$, to obtain a map from the vector space $\mathbb{Q}[R_{\infty} ^{\flat}]$ with basis $R_{\infty} ^{\flat}.$ We denote this map by the same symbol. 

For a local $\mathbb{Q}$-algebra $R$, let $B_{2}(R)$ is the $\mathbb{Q}$-space generated by the symbols $[x],$ with $x(1-x) \in R^{\times},$ modulo the subspace generated by 
$$
 [x]-[y]+[y/x]-[(1-x^{-1})/(1-y^{-1})]+[(1-x)/(1-y)],
 $$  
 for all $x,y \in R^{\times} $ such that $(1-x)(1-y)(1-x/y) \in R^{\times}.$
The Bloch complex $\delta:B_{2}(R) \to \Lambda^2 R^{\times} ,$ such that $\delta([x]):=(1-x)\wedge x,$ computes the weight 2 motivic cohomology of $R,$ when $R$ is a field. When we would like to specify the $\delta$ defined on $B_{2}(R_{\infty})$ (resp. $B_{2}(R_{m})$) we denote it by $\delta_{\infty}$ (resp. $\delta_{m}$).

 Let $V$ be a free $R$ module with basis $\{e_{i} \}_{i \in I}$ and $\{ e_{i} ^{\vee} \} _{i \in  I}$ the dual basis of $V ^{\vee}.$ Given $v$ and  $\alpha=\sum_{i \in I} a_{i} e_{i}$ in $V,$ we let 
$$
(v|\alpha):=\sum _{i \in I} a_{i}e_{i} ^{\vee} (v) \in R.
 $$
If there is an ordering on $I,$ we let $\{e_{i} \wedge e_{j}\}_{i>j}$ be the corresponding basis of $\Lambda^{2}  V.$ Then, with the above notation, the expression $(w|\beta),$ for $w,\,\beta \in \Lambda ^2 V,$ is defined.  We consider $tR_{\infty},$ as a free $R$-module with basis $\{t^i \}_{1 \leq i }.$

Let us denote the composition of  $ B_{2}(R_{\infty}) \xrightarrow{\delta} \Lambda ^{2} R_{\infty} ^{\times} $ with the canonical projection $\mathbb{Q}[R_{\infty} ^{\flat}] \to B_{2}(R_{\infty})$ also by $\delta.$ 
Also denote the  map 
$$
\Lambda ^{2} R_{\infty} ^{\times}  \to \Lambda ^{2} tR_{\infty}  \twoheadrightarrow \Lambda ^{2}_ {R} tR_{\infty}
$$
induced by $\Lambda^{2} \log ^{\circ}: \Lambda ^{2}R_{\infty} ^{\times} \to \Lambda ^{2}tR_{\infty},$ by the same symbol.  

\begin{proposition}\label{prop-alter-li}
With the notation above, for $\alpha \in \mathbb{Q}[R_{\infty} ^{\flat}]$ and $2 \leq m<r<2m,$ we have 
\begin{eqnarray}\label{twodilogs}
 \ell i_{m,r}(\alpha)=\Big(\Lambda^{2} \log ^{\circ} (\delta(\alpha))| \sum _{1\leq i \leq r-m} it^{r-i}\wedge t^i\Big),
\end{eqnarray}
and this function descends through the canonical projections 
$$
\mathbb{Q}[R_{\infty} ^{\flat}] \to B_{2}(R_{\infty}) \to B_{2}(R_{m}),
$$
to define a map from $B_{2}(R_{m})$ to $R,$ denoted by the same notation. 
\end{proposition}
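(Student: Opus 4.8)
The plan is to prove the two assertions of the proposition in turn: first the identity \eqref{twodilogs}, and then the descent through the two quotient maps $\mathbb{Q}[R_\infty^\flat]\to B_2(R_\infty)\to B_2(R_m)$. For the identity, I would start from the defining formula \eqref{adddilogformula}, $\ell i_{m,r}(se^u)=t_{r-1}\!\left(\log^\circ(1-se^{u|_m})\cdot\frac{\partial u}{\partial t}\big|_{r-m}\right)$, and compute $\delta([se^u])=(1-se^u)\wedge se^u$. Applying $\Lambda^2\log^\circ$ gives $\log^\circ(1-se^u)\wedge u$, since $\log^\circ(se^u)=u$ (as $s\in R^\times$ is the constant term and $u\in tR_\infty$). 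Now I would expand $\log^\circ(1-se^u)\wedge u$ in the basis $\{t^j\wedge t^i\}_{j>i}$ of $\Lambda^2_R(tR_\infty)$ and pair it against $\sum_{1\le i\le r-m} i\,t^{r-i}\wedge t^i$. The pairing picks out, for each $i$ with $1\le i\le r-m$, the coefficient $i$ times the $(r-i,i)$-component of $\log^\circ(1-se^u)\wedge u$, i.e.\ $i$ times $\big(t_{r-i}\log^\circ(1-se^u)\big)\big(t_i(u)\big)-i\big(t_i\log^\circ(1-se^u)\big)\big(t_{r-i}(u)\big)$.

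The heart of the computation is then to recognize that this sum equals $t_{r-1}\big(\log^\circ(1-se^{u|_m})\cdot\frac{\partial u}{\partial t}\big|_{r-m}\big)$. Two simplifications drive this. First, $\frac{\partial u}{\partial t}=\sum_{i\ge1} i\,u_i t^{i-1}$, so $\frac{\partial u}{\partial t}\big|_{r-m}=\sum_{1\le i\le r-m} i\,u_i t^{i-1}$, and multiplying by $\log^\circ(1-se^{u|_m})$ and extracting the $t^{r-1}$ coefficient produces exactly $\sum_{1\le i\le r-m} i\,u_i\cdot t_{r-i}\big(\log^\circ(1-se^{u|_m})\big)$. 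Second, I must check that the $t_{r-i}$ coefficient of $\log^\circ(1-se^{u})$ agrees with that of $\log^\circ(1-se^{u|_m})$ for $i\le r-m$, i.e.\ for $r-i\ge m$; this holds because changing $u$ to $u|_m$ alters $se^u$ only in degrees $\ge m$, hence alters $\log^\circ(1-se^u)$ only in degrees $\ge m$, so coefficients in degrees $\ge m$ are unaffected — wait, that is the wrong direction. Let me instead note: $se^{u}$ and $se^{u|_m}$ agree modulo $t^m$, so $\log^\circ(1-se^u)$ and $\log^\circ(1-se^{u|_m})$ agree modulo $t^m$, i.e.\ in degrees $<m$. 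Since $r-i$ ranges over $m\le r-i\le r-1<2m$ here... so in fact I need the matching in degrees $\ge m$, which requires a genuine argument: one shows that the degree-$(r-i)$ part of $\log^\circ(1-se^u)$, for $m\le r-i$, depends on $u$ only through $u|_m$ together with contributions that pair to zero against $\frac{\partial u}{\partial t}|_{r-m}$ by a degree count ($r-i$ plus $i-1$ giving $r-1<2m$, forcing one factor to have degree $<m$). This degree bookkeeping is the main obstacle, and it is exactly where the hypothesis $m<r<2m$ is used: it guarantees that in every product contributing to the $t^{r-1}$ coefficient, at least one factor sits in degree $<m$, so the truncation at $m$ is harmless. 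The antisymmetric second term $i\big(t_i\log^\circ(1-se^u)\big)\big(t_{r-i}(u)\big)$ must be shown to cancel or vanish; since $i\le r-m<m$ and $t_i$ of the log already only sees degrees $<m$, a symmetrization/reindexing argument handles it.

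For the descent statement, I would argue in two stages. That $\ell i_{m,r}$ factors through $B_2(R_\infty)$ — i.e.\ vanishes on the five-term relations — follows formally from \eqref{twodilogs}: $\delta$ kills the five-term relator in $\Lambda^2 R_\infty^\times$ (this is the standard fact that $\delta$ is well-defined on $B_2$), hence so does $\big(\Lambda^2\log^\circ(\delta(-))\mid\sum i\,t^{r-i}\wedge t^i\big)$. That the resulting map on $B_2(R_\infty)$ further factors through $B_2(R_m)$ requires showing $\ell i_{m,r}$ vanishes on $\ker(B_2(R_\infty)\to B_2(R_m))$; here I would invoke (or reprove in this language) the corresponding statement from \cite{unv1}, namely that the kernel is generated by $[x]-[x|_m\bmod\text{suitable lift}]$-type elements and by elements $[se^u]$ with $u\in t^m R_\infty$, on which the formula \eqref{adddilogformula} manifestly vanishes because $\frac{\partial u}{\partial t}\big|_{r-m}$ then starts in degree $\ge m-1\ge r-m$... more precisely $u\in t^mR_\infty$ gives $u|_m=0$, so $\log^\circ(1-se^{u|_m})=\log^\circ(1-s)$ is a constant and its product with $\frac{\partial u}{\partial t}|_{r-m}$, whose lowest degree term is in degree $\ge m-1$; since $r-1<2m-1$ we get... we need $r-1\ge m-1$ which is true, so this is not automatically zero and one must use that $\log^\circ(1-s)\in R$ is degree $0$ while $\frac{\partial u}{\partial t}|_{r-m}\in t^{m-1}R_\infty$ truncated at degree $r-m-1<m-1$, hence is $0$. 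Thus $\ell i_{m,r}([se^u])=0$ when $u\in t^mR_\infty$, and more generally the formula depends only on $u\bmod t^m$ and $s$, which is precisely the data of the image in $B_2(R_m)$. Assembling these gives the factorization, completing the proof.
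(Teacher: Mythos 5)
Your handling of the two descent statements is essentially the paper's (both ultimately rest on \cite{unv1}), but your proof of the identity (\ref{twodilogs}) has a genuine gap at exactly the point you flag as ``the main obstacle.'' Write $f=\log^{\circ}(1-se^{u})$ and $\bar f=\log^{\circ}(1-se^{u|_m})$, with $f_j,\bar f_j$ their $t^j$-coefficients. The pairing on the right-hand side equals $\sum_{1\le i\le r-m} i\,(f_{r-i}u_i - f_i u_{r-i})$, while the left-hand side is $\sum_{1\le i\le r-m} i\,\bar f_{r-i}u_i$; so what you must prove is the cancellation $\sum_i i\,u_i(f_{r-i}-\bar f_{r-i}) = \sum_i i\,f_i\,u_{r-i}$ (note $f_i=\bar f_i$ automatically for $i\le r-m<m$). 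Your degree count does not deliver this: the coefficients $f_{r-i}$ with $r-i\ge m$ genuinely depend on $u_m,\dots,u_{r-i}$ and are multiplied by $iu_i\neq 0$, so truncating $u$ to $u|_m$ is \emph{not} harmless there; and the antisymmetric terms $i f_i u_{r-i}$ do not vanish --- they are precisely what absorbs the difference. (Check $m=2$, $r=3$: one computes $u_1(f_2-\bar f_2)=-\tfrac{s}{1-s}u_1u_2=\bar f_1 u_2$, so both sides are nonzero and this is a real cancellation, not a vanishing; no ``symmetrization/reindexing'' makes it disappear for free.)

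The identity you are missing is in fact equivalent to the statement that the right-hand side of (\ref{twodilogs}) depends only on $s$ and $u|_m$, i.e.\ to the descent through $\mathbb{Q}[R_m^{\flat}]$, which is \cite[Prop.\ 2.2.1]{unv1}. The paper's proof exploits this in the opposite order from yours: it first invokes that descent to replace $u$ by $u|_m$ on the right-hand side, after which the antisymmetric terms vanish trivially because $\ell_{r-i}(se^{u|_m})=(u|_m)_{r-i}=0$ for $r-i\ge m$, and the identity drops out in one line. So either invoke \cite[Prop.\ 2.2.1]{unv1} \emph{before} the computation rather than after, or supply an honest proof of the cancellation above; as written, the central step is asserted rather than proved.
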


\begin{proof}
We proved in \cite[Prop. 2.2.1]{unv1} that the function defined by the right hand side of (\ref{twodilogs}), temporarily denote it by $\ell i_{m,r} ^{*},$ descends to give a map from $\mathbb{Q}[R_{m} ^{\flat}]$ and in  \cite[Prop. 2.2.2]{unv1} that it descends to give a map from $B_{2}(R_{m}).$ Therefore it only remains to prove the equality (\ref{twodilogs}).

With the notation $\ell_i(\alpha):=t_i(\log^{\circ}(\alpha)),$  $\ell i_{m,r} ^{*}$ can be rewritten as 
$$\ell i_{m,r} ^{*}=\Big( \sum_{1\leq i \leq r-m}i \cdot \ell_{r-i}\wedge \ell _i \Big) \circ \delta.$$ 
Then we have $\ell i_{m,r} ^{*}(se^{u})=\ell i_{m,r} ^{*}(s e^{u|_{m}}),$ since we know that $\ell i_{m,r} ^{*}$ descends to $\mathbb{Q}[R_{m} ^{\flat}].$ We have $\ell _{i} (se^{u|_m})=u_{i},$ for $1\leq i <m$ and $\ell _{i} (se^{u|_m})=0,$ for $m \leq i.$ Using this we obtain that  $\ell i_{m,r} ^{*}(s e^{u|_{m}})=\sum_{1\leq i \leq r-m}i \cdot \ell_{r-i}(1-se^{u|_{m}}) \cdot  u_i=\ell i_{m,r} (se^u).$ 
\end{proof}
 Let us give a name to the essential map  which constitute $\ell i_{m,r}.$ 
 \begin{definition}\label{defnlmr}
We  denote the map from $\Lambda^2 R_{\infty} ^{\times}$ to $R$ which sends $\alpha \wedge \beta $ to 
 $$(\Lambda ^2 \log ^{\circ} (\alpha\wedge \beta)| \sum_{1 \leq i \leq r-m} it^{r-i} \wedge t^i)$$
 by $\ell_{m,r}.$ It is clear  that $\ell_{m,r}: \Lambda^2 R_{\infty} ^{\times} \to R $ factors through the projection $\Lambda^2 R_{\infty} ^{\times}  \to \Lambda ^2 R_{r} ^{\times}.$ The additive dilogarithm above is given in terms of this function as 
 $$
 \ell i_{m,r}=\ell_{m,r}\circ\delta_{\infty}=\ell_{m,r}\circ\delta_{r}.
 $$
 \end{definition}

We will use the main result from \cite{unv1}, there it was stated in the case when $R$ is a field of characteristic 0, but the same proof works when $R$ is a regular, local $\mathbb{Q}$-algebra.

\begin{theorem}\label{theorem-b2}
The complex  $B_{2}(R_{m}) ^{\circ} \xrightarrow{\delta ^{\circ}} (\Lambda^2 R_{m} ^{\times})^{\circ}$ computes the infinitesimal part of the weight two motivic cohomology of $R_{m},$ and the map $\oplus _{m <r<2m} \ell i_{m,r}$ induces an isomorphism 
$$
{\rm HC}_{2} ^{\circ}(R_{m})^{(1)}  \simeq K_{3} ^{\circ}(R_m)^{(2)}  \simeq ker (\delta ^{\circ}) \xrightarrow{\sim} R^{\oplus (m-1)}
$$
from the relative cyclic homology group ${\rm HC}_{2} ^{\circ}(R_{m})^{(1)} $ to $R^{\oplus
(m-1)}.$ 
\end{theorem}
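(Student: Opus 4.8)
The strategy is to re-run the proof of the corresponding statement over a field from \cite{unv1}, checking that the one place where the field hypothesis intervenes---a cyclic homology computation---survives the passage to a regular local $\mathbb{Q}$-algebra $R$. The backbone is a chain of three identifications: Goodwillie's theorem turns $K_{3}^{\circ}(R_{m})^{(2)}$ into the infinitesimal weight-one cyclic homology ${\rm HC}_{2}^{\circ}(R_{m})^{(1)}$; an explicit resolution computes the latter; and the additive dilogarithms $\ell i_{m,r}$, through Definition \ref{defnlmr}, realize this computation via the Bloch complex.

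First I would apply Goodwillie's theorem: the ideal $(t)\subseteq R_{m}$ is nilpotent and $R$ is a $\mathbb{Q}$-algebra, so the relative Chern character gives an isomorphism $K_{n}^{\circ}(R_{m})\xrightarrow{\sim}{\rm HC}_{n-1}^{\circ}(R_{m})$ compatible with the $\lambda$-grading, whence $K_{3}^{\circ}(R_{m})^{(2)}\simeq{\rm HC}_{2}^{\circ}(R_{m})^{(1)}$ and, one degree lower, $K_{2}^{M,\circ}(R_{m})\simeq{\rm HC}_{1}^{\circ}(R_{m})^{(1)}$. Next I would compute these. Writing $R_{m}=R\otimes_{\mathbb{Q}}\mathbb{Q}[t]/(t^{m})$ and using that the $\lambda^{(1)}$-eigenspace sees $R$ only through ${\rm HH}_{0}(R)=R$, base change reduces everything to the classical computation of the reduced cyclic homology of $\mathbb{Q}[t]/(t^{m})$: the $2$-periodic free resolution of $\mathbb{Q}[t]/(t^{m})$ over $\mathbb{Q}[t]/(t^{m})\otimes_{\mathbb{Q}}\mathbb{Q}[t]/(t^{m})$ (available because $t^{m}$ is a nonzerodivisor) together with Connes' $SBI$ sequence give $\overline{{\rm HC}}_{2}(\mathbb{Q}[t]/(t^{m}))\cong (t)\cdot\mathbb{Q}[t]/(t^{m})$, concentrated in weight one since $\Omega^{2}_{(\mathbb{Q}[t]/(t^{m}))/\mathbb{Q}}=0$. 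Tensoring with $R$ yields ${\rm HC}_{2}^{\circ}(R_{m})^{(1)}\cong R^{\oplus(m-1)}$ with an $R$-basis naturally indexed by $m<r<2m$, and the analogous bookkeeping handles ${\rm HC}_{1}^{\circ}(R_{m})^{(1)}$. Nothing in this step uses more about $R$ than that it is a $\mathbb{Q}$-algebra.

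It remains to match this with the Bloch complex. By Definition \ref{defnlmr} one has $\ell i_{m,r}=\ell_{m,r}\circ\delta_{\infty}$, so on a class in $\ker(\delta_{m}^{\circ})$ the map $\oplus_{m<r<2m}\ell i_{m,r}$ is computed by lifting to $B_{2}(R_{\infty})^{\circ}$, applying $\delta_{\infty}$ into $\ker\big((\Lambda^{2}R_{\infty}^{\times})^{\circ}\to(\Lambda^{2}R_{m}^{\times})^{\circ}\big)$, and then applying $\oplus_{r}\ell_{m,r}$. Chasing the chain-level Bloch-group construction of the comparison map with $K$-theory exhibits $\oplus_{r}\ell i_{m,r}\colon\ker(\delta_{m}^{\circ})\to R^{\oplus(m-1)}$ as the composite of a natural map $\ker(\delta_{m}^{\circ})\to K_{3}^{\circ}(R_{m})^{(2)}\cong{\rm HC}_{2}^{\circ}(R_{m})^{(1)}$ with the isomorphism of the previous paragraph; simultaneously $\mathrm{coker}(\delta_{m}^{\circ})=K_{2}^{M,\circ}(R_{m})\cong{\rm HC}_{1}^{\circ}(R_{m})^{(1)}$ by the Steinberg relations, which is the assertion that the two-term complex computes infinitesimal weight-two motivic cohomology. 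Here the regularity of $R$ enters, through the Gersten property for regular local $\mathbb{Q}$-algebras and the Bloch--Suslin picture, to guarantee that this comparison map has the stated source and target. To finish, since the target $R^{\oplus(m-1)}$ is known and $\oplus_{r}\ell i_{m,r}$ factors through $\ker(\delta_{m}^{\circ})\to{\rm HC}_{2}^{\circ}(R_{m})^{(1)}$, it suffices to produce, for each $m<r<2m$, an explicit class $\xi_{r}\in\ker(\delta_{m}^{\circ})$ for which the matrix $\big(\ell i_{m,r'}(\xi_{r})\big)_{r,r'}$ is invertible; these $\xi_{r}$ are cross-ratio-type combinations of symbols $[x]$ with $x\in R_{\infty}^{\flat}$ and $x\equiv x(0)\pmod{t}$, exactly as in \cite{unv1}.

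I expect the main obstacle to be this last computational step: arranging the $\xi_{r}$ so that they lie in $\ker\delta$ after reduction modulo $(t^{m})$, and evaluating the truncated-logarithm formula (\ref{adddilogformula}) on them sharply enough to read off the non-degeneracy of the matrix. Over a regular local $\mathbb{Q}$-algebra this is identical to \cite{unv1}, since the computation unfolds entirely inside $R_{\infty}^{\flat}$, $\Lambda^{2}R_{\infty}^{\times}$ and the defining five-term relation of $B_{2}$ with coefficients in $R^{\times}$---all meaningful over any local $\mathbb{Q}$-algebra, which is precisely the generality in which $B_{2}$ was defined above. The only points that are genuinely new relative to \cite{unv1}---insensitivity of the cyclic-homology computation to enlarging $R$, and the existence of the Bloch-complex-to-$K$-theory comparison for such $R$---are dealt with by flat base change and standard structural results, and affect none of the explicit formulas.
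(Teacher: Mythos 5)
Your proposal is correct and follows the same route the paper relies on: the paper gives no independent argument for this theorem, simply citing \cite{unv1} and asserting that the proof there (Goodwillie's theorem, the cyclic homology computation for truncated polynomial rings, and the explicit evaluation of $\ell i_{m,r}$ on generating classes in $\ker(\delta^{\circ})$) carries over verbatim from fields of characteristic $0$ to regular local $\mathbb{Q}$-algebras. Your reconstruction matches that intended argument, including correctly isolating the two places where the hypotheses on $R$ actually intervene -- the weight decomposition of ${\rm HC}$ via regularity, and the Bloch-complex-to-$K$-theory comparison.
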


\section{Infinitesimal Milnor $K$-theory of local rings} Suppose that $R$ is a local $\mathbb{Q}$-algebra and $A$ is an  $R$-algebra, together with a nilpotent ideal $I$ such that  the natural map $R \to A/I$ is an isomorphism. Then the Milnor $K$-theory $K_{n} ^{M}(A)$ of $A,$ naturally splits into a direct sum $K_{n} ^{M}(A)=K_{n} ^{M}(R) \oplus K_{n} ^{M}(A) ^{\circ}.$ In this section, we will describe this infinitesimal part $K_{n} ^{M}(A) ^{\circ}$ in terms of K\"{a}hler differentials. It is easy to find such an isomorphism using Goodwillie's theorem \cite{good}, and standard computations in cyclic homology. However, in the next section, we need an explicit description of this isomorphism  in order to determine which symbols vanish in the corresponding Milnor $K$-group. Fortunately, determining what this isomorphism turns out to be quite easy.  By the functoriality and the multiplicativity of the isomorphism, we reduce the computation to the case of $K_2^{M}$ of the  dual numbers over $R$ where the computation is easy.

There is no doubt that the results in this section are  well-known and we do not claim any originality. We simply have not been able to find a description of the map $\varphi$ below which is easily quotable in the literature. Since our discussion is quite short we did not refrain from including it in the present paper. We will only need the result below for $A=R_m.$ On the other hand, in a future work we will need this result in full generality which justifies our somewhat more general discussion:

\begin{proposition}\label{milnor kahler}
There exists a unique map $\varphi:K_{n} ^{M}(A)^{\circ}\to \Omega^{n-1}_{A}/(d\Omega^{n-2}_{A}+\Omega^{n-1}_{R})$ such that 
\begin{eqnarray}\label{formulaphi}
\varphi (\{\alpha,\beta_1,\cdots, \beta_{n-1} \})=\log (\alpha)\frac{d\beta_1}{\beta_1}\wedge \cdots \wedge \frac{d\beta_{n-1}}{\beta_{n-1}},
\end{eqnarray}
for $\alpha \in 1+I $ and $\beta_{1}, \cdots, \beta_{n-1} \in A^{\times},$
 and this map is an isomorphism. 
\end{proposition}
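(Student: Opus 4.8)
The plan is to follow the strategy that the authors themselves sketch before the statement: reduce to a universal computation for $K_2^M$ of dual numbers and then bootstrap by multiplicativity and functoriality. First I would construct the map $\varphi$ rather than merely checking it is well-defined on symbols. The natural route is to realize $K_n^M(A)^\circ$ via its known description in relative $K$-theory / cyclic homology: by Goodwillie's theorem relative $K$-theory of the nilpotent ideal $I$ agrees rationally with relative cyclic homology, and the weight-$(n)$ piece of $K_n^M$ maps to (a quotient of) $\Omega^{n-1}_A$. Concretely, I would take the composite $K_n^M(A)^\circ \hookrightarrow K_n^M(A) \to K_n(A) \to \mathrm{HC}_{n-1}(A)$ (the Chern character / Dennis trace landing in the appropriate weight graded piece), and then identify the target with $\Omega^{n-1}_A/(d\Omega^{n-2}_A + \Omega^{n-1}_R)$ — the $R$-summand being killed because we are on the infinitesimal part, and the $d\Omega^{n-2}_A$ being killed because $\mathrm{HC}$ involves $\Omega/d\Omega$. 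Uniqueness is immediate: $1+I$ generates $A^\times$ modulo $R^\times = (A/I)^\times$, so symbols $\{\alpha,\beta_1,\dots,\beta_{n-1}\}$ with $\alpha \in 1+I$ span $K_n^M(A)^\circ$ (here one uses that $K_n^M(A) = K_n^M(R) \oplus K_n^M(A)^\circ$ and the Steinberg relations to move the $1+I$ factor to the front), so formula (\ref{formulaphi}) determines $\varphi$ completely once it is known to descend.

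Next I would verify that formula (\ref{formulaphi}) genuinely defines a homomorphism out of $K_n^M(A)^\circ$: multilinearity is clear from the properties of $\log$ and $d\log$, so the content is the Steinberg relation $\{\alpha, 1-\alpha, \dots\} = 0$ when both $\alpha$ and $1-\alpha$ lie in appropriate unit groups. The key identity is the classical one: for $\alpha \in 1+I$ with $1-\alpha$ also a unit, $\log(\alpha)\, d\log(1-\alpha) + \log(1-\alpha)\, d\log(\alpha)$ is an exact form (it is $d(\log\alpha \cdot \log(1-\alpha))$ minus a correction), which after passing to the quotient by $d\Omega^{n-2}_A$ kills the Steinberg symbol up to terms where one of the remaining entries is used to exhibit exactness. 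I would organize this by first treating $n=2$, $A = R[\epsilon]/(\epsilon^2) =: R[\epsilon]$, where $K_2^M(R[\epsilon])^\circ \cong \Omega^1_R$ (a result of Bloch/van der Kallen type, available rationally) and where $\varphi$ is visibly the map $\{1+r\epsilon, u\} \mapsto r\epsilon\, d\log u = r\, du/u \cdot \epsilon$; one checks directly that this is an isomorphism onto $\Omega^1_{R[\epsilon]}/(d R[\epsilon] + \Omega^1_R) \cong \Omega^1_R$.

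Having the base case, I would prove the general case in two reduction steps. For the \emph{surjectivity and injectivity} over a general $A$ with nilpotent $I$: filter $I$ by powers $I \supset I^2 \supset \cdots$ and induct, using that each graded piece $A/I^{j+1} \to A/I^j$ is a square-zero extension, which reduces everything to square-zero ideals, i.e. to "generalized dual numbers" $R \oplus M$ with $M$ a module; then by the multiplicativity of Milnor $K$-theory and of differentials, and compatibility of $\varphi$ with products, the computation of $K_*^M(R\oplus M)^\circ$ as a graded module reduces to the rank-one / $n=2$ case handled above. The surjectivity uses that $\Omega^{n-1}_A$ is generated by $d\log$'s of units together with the image of $\Omega^{n-1}_R$, and exact forms, modulo which every generator is hit by (\ref{formulaphi}).

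The main obstacle I expect is the well-definedness on the Steinberg relation in the presence of the quotient by $d\Omega^{n-2}_A$: one must check carefully that the exact-form correction in the identity for $\log\alpha\, d\log(1-\alpha)$ really does lie in $d\Omega^{n-2}_A$ after wedging with the remaining $d\log\beta_i$, and that no constant-term ($\Omega^{n-1}_R$) ambiguity creeps in — this is exactly the kind of bookkeeping that makes the "standard" result hard to quote verbatim, and it is presumably why the authors include the section. The reduction machinery (dévissage on powers of $I$, multiplicativity) is routine once the base case and this compatibility are nailed down.
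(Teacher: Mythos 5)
Your overall skeleton matches the paper's: you construct $\varphi$ abstractly as the composite $K_n^M(A)^\circ \to K_n^{(n)}(A)^\circ \xrightarrow{\sim} \mathrm{HC}_{n-1}^{(n-1)}(A)^\circ = \Omega^{n-1}_A/(d\Omega^{n-2}_A+\Omega^{n-1}_R)$ via Goodwillie, you get uniqueness from the fact that symbols with first entry in $1+I$ generate, and you reduce the formula to a dual-numbers computation using multiplicativity. But there are two genuine gaps. First, you conflate two different tasks: having defined $\varphi$ abstractly, what must be proved is that \emph{this} map satisfies (\ref{formulaphi}) on symbols, not that the right-hand side of (\ref{formulaphi}) descends through the Steinberg relations. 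Your proposed Steinberg check is moreover ill-posed as stated: if $\alpha\in 1+I$ then $1-\alpha\in I$ is nilpotent, hence not a unit, so the symbol $\{\alpha,1-\alpha,\dots\}$ you want to kill does not exist; the relations cutting out $K_n^M(A)^\circ$ are not of that simple form, which is exactly why the paper works with the abstract map and gets injectivity from the Nesterenko--Suslin theorem (the composite back to $K_n^M(A)$ is multiplication by $(n-1)!$) rather than from a presentation.

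Second, and more seriously, the reduction of the general formula to the $n=2$ dual-numbers case by ``multiplicativity of $\varphi$'' does not go through as you describe. The multiplicativity available is $\varphi(a\cdot b)=\varphi(a)\wedge d(\varphi(b))$ for $a,b$ \emph{both in the infinitesimal part}; but after reducing to $\beta_i\in R^\times$ the factor $\{\beta_1,\dots,\beta_{n-1}\}$ is a constant symbol, not an infinitesimal one, so you cannot peel it off. The paper's workaround is the identity $2\{1+\tfrac{t^2}{2},\lambda\}=\{1+\tfrac{t}{\lambda},1+\lambda t\}$ in $K_2^M(R_3)$, which rewrites the offending symbol as a product of two genuinely infinitesimal classes $\{1+\tfrac{t}{\beta_1}\}$ and $\{1+\beta_1 t,\beta_2,\dots,\beta_{n-1}\}$ to which multiplicativity and the induction on $n$ do apply (together with a $\star$-weight argument to pass between $R_2$, $R_3$ and $R_{m+1}$). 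Without this identity, or a substitute for it, your induction has no way to start peeling constants off the symbol, so the central step of the proof is missing.
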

\begin{proof}
The uniqueness follows since the infinitesimal part of Milnor $K$-theory is generated by terms $\{\alpha,\beta_1,\cdots, \beta_{n-1} \}$ as in the statement. 

We  define a functorial map $\varphi$ by the following composition: 
\begin{align}\label{injection}
K_{n} ^{M}(A)^{\circ}   \to  K_{n}^{(n)} (A)^{\circ } \xrightarrow{\sim} {\rm HC}_{n-1} ^{(n-1)}(A)^{\circ}=(\Omega^{n-1} _{A}/d\Omega^{n-2} _{A})^{\circ}=\Omega^{n-1} _A/(d\Omega^{n-2}_A+\Omega^{n-1}_R).
\end{align}
The first map is the multiplicative map induced by the isomorphism when $n=1,$ the second one is the Goodwillie isomorphism \cite{good}, and the last one is given by \cite[Theorem 4.6.8]{lod}.

By  Nesterenko-Suslin's theorem  \cite{ns}, Milnor $K$-theory is  the first obstruction to the stability of the   homology of general linear groups:
$$
K_{n} ^{M}(A) \simeq {\rm H}_{n}({\rm GL}_{n}(A),\mathbb{Q})/{\rm H}_{n}({\rm GL}_{n-1}(A),\mathbb{Q}).
$$
Moreover, the composition 
$$
K_{n} ^{M}(A)^{\circ}  \to  K_{n}^{(n)} (A)^{\circ }  \to Prim({\rm H}_{n} ({\rm GL}(A),\mathbb{Q})) \to {\rm H}_{n} ({\rm GL}(A),\mathbb{Q})\simeq  {\rm H}_{n} ({\rm GL}_{n}(A),\mathbb{Q}) \twoheadrightarrow K_{n} ^M (A)
$$
is multiplication by $(n-1)!$ \cite{ns}. This implies the injectivity of $\varphi.$ It only remains to prove the property (\ref{formulaphi}), since then the surjectivity of $\varphi$ also follows. 

The multiplicativity of $\varphi$ takes the following form:  for $a, \, b \in K_{m} ^M (A) ^{\circ},$  
$
\varphi(a\cdot b)=\varphi (a) \wedge d(\varphi(b)).
$
We do induction on $n.$ The statement is clear for $n=1.$  We show that we may assume that $\beta_ i \in R^{\times}$:   

\begin{lemma}
Suppose that we have the formula (\ref{formulaphi}) for $\alpha \in  1+ I$ and $\beta_{i} \in R^{\times},$ for $1\leq i \leq n-1,$ then we have the same formula for  $\alpha \in  1+I$ and $\beta_{i} \in A^{\times},$ for $1\leq i \leq n-1.$
\end{lemma}

\begin{proof}
We do induction on the number of $\beta_i$  which are not in $R^{\times}.$ If all of them are in $R^{\times},$ the hypothesis of the lemma gives the expression. If there is at least one $\beta_{i}$ which is not in $R^{\times},$ without loss of generality  assume that  $\beta_{n-1} \notin R^{\times}. $ Let us write $\beta_{n-1}:=\lambda \cdot \beta,$ with $\lambda \in R^{\times}$ and $\beta \in 1+I.$ Then 
\begin{eqnarray*}
\varphi (\{\alpha,\beta_1,\cdots, \beta_{n-1} \})=\varphi (\{\alpha,\beta_1,\cdots,\beta_{n-2}, \lambda \})+\varphi (\{\alpha,\beta_1,\cdots,\beta_{n-2}, \beta \}).
\end{eqnarray*}
By the multiplicativity of $\varphi,$ the formula for $n=1,$ and the induction hypothesis on $n$, we have 
$$
\varphi (\{\alpha,\beta_1,\cdots,\beta_{n-2}, \beta \})=\varphi (\{\alpha,\beta_1,\cdots,\beta_{n-2} \}) \wedge d(\log (\beta))=\log(\alpha) \frac{d \beta_1}{\beta_1}\wedge \cdots  \frac{d \beta_{n-2}}{\beta_{n-2}}\wedge \frac{d \beta}{\beta}.
$$
By the induction hypothesis on the number of $\beta_{i}$ not in $R^{\times},$   
we have  
$$
\varphi (\{\alpha,\beta_1,\cdots,\beta_{m-1}, \lambda \})=\log (\alpha) \frac{d\beta_1}{\beta_1}\wedge \cdots \wedge \frac{d\beta_{m-1}}{\beta_{m-1}}\wedge \frac{d \lambda}{\lambda}.
$$
Adding these two expressions, we obtain the expression we were looking for. 
\end{proof}

The above lemma shows that we may without loss of generality assume that the $\beta_i \in R^{\times}.$ The next lemma shows that we may  also  assume that $A=R_{r}$ and $\alpha=1+t.$  

\begin{lemma}
Suppose that we have the formula (\ref{formulaphi}) for $\alpha=1+t$ and $\beta _i \in R^{\times}$ for $1 \leq i \leq n-1,$ for the ring $R_{r}:=R[t]/(t^r).$ Then we have the same formula for any $A$ as above.
\end{lemma}

\begin{proof}
Given $\alpha \in 1+I \subseteq A^{\times}$ and $\beta_i \in R^{\times}.$ Since  $\alpha-1$ is nilpotent,   we have an $R$-algebra morphism $\psi:R_{r} \to A,$ for some $r,$ such that $\psi(t)=\alpha-1.$ The result then follows by the functoriality of $\varphi$ since the map induced by $\psi$ maps $\{1+t,\beta_1,\cdots, \beta_{n-1} \}$ to $\{\alpha,\beta_1,\cdots, \beta_{n-1} \}.$  
\end{proof}
Next we show that we can also assume that $r=2.$ 
\begin{lemma}
Suppose that we have the formula (\ref{formulaphi}) for $\alpha=1+t$ and $\beta _i \in R^{\times}$ for $1 \leq i \leq n-1,$ for the ring $R_{2}.$ Then we have the same formula for any $A$ as above. 
\end{lemma}

\begin{proof}
We need to prove the result for $1+t \in R_{r},$ and $\beta_i \in R^{\times}.$ Since 
$1+t=e^{\log (1+t)},$ it is a product of elements of the form $e^{at^m},$ for $1 \leq m <r,$ and $a\in \mathbb{Q}.$ Therefore it is enough to prove the formula for elements as above  with  $\alpha=e^{at^m}.$  

Since the element $\{e^{at^m},\beta_1,\cdots,\beta_{n-1} \}$ is of $\star$-weight $m,$ its image under $\varphi$ is  in $(\Omega^{n-1}_{R_r}/d\Omega^{n-2}_{R_r})^{[m]},$ the $\star$-weight $m$ part of $\Omega^{n-1}_{R_r}/d\Omega^{n-2}_{R_r}.$ On the other hand the natural surjection $R_r \to R_{m+1}$ induces an isomorphism 
$$
(\Omega^{n-1}_{R_r}/d\Omega^{n-2}_{R_r})^{[m]} \simeq (\Omega^{n-1}_{R_{m+1}}/d\Omega^{n-2}_{R_{m+1}})^{[m]}.
$$
 Therefore, without loss of generality, we will assume that $r=m+1.$ Then we use the map from $R_2$ to $R_{m+1}$ that sends $t$ to $at^m.$ This map sends $1+t$ to $e^{at^m}$ and  hence maps $\{1+t,\beta_1,\cdots, \beta_{n-1} \}$ to $\{e^{at^m},\beta_1,\cdots, \beta_{n-1} \}.$ Therefore, again by the functoriality of $\varphi,$ the result follows from the assumption on $R_2.$ 
 \end{proof}
 To finish the proof, we will need a special identity in $K_{2} ^{M}(R_3):$ 
 \begin{lemma}
 We have the following relation in $K_{2} ^{M} (R_3):$
 $$
 2\{1+\frac{t^2}{2},\lambda \}=\{1+\frac{t}{\lambda},1+\lambda t \},
 $$
 for any $\lambda \in R^{\times}.$ 
 \end{lemma}

\begin{proof} It is possible to give a direct computational proof of this statement. We choose to give a proof which is based on the ideas in this section. 

First suppose that $R$ is a field. We know that both sides are in $K_{2} ^{M} (R_3) ^{\circ}.$ We  know from \cite{gra} that the map  $K_{2} ^{M} (R_3) ^{\circ} \to (\Omega^{1} _{R_3}/dR_{3})^{\circ}$ which sends $\{\alpha,\beta \}$ to $\log (\alpha) \frac{d\beta}{\beta},$ where $\alpha -1 \in (t),$ is an isomorphism.  

The left hand side goes to $t^2 \frac{d \lambda}{\lambda},$ whereas the right hand side goes to 
$$
\frac{t}{\lambda} d(\lambda t)=t^2\frac{d\lambda}{\lambda}+tdt=t^2\frac{d\lambda}{\lambda}+\frac{1}{2}dt^2=t^2\frac{d\lambda}{\lambda}
$$ 
in $(\Omega^{1} _{R_3}/dR_{3})^{\circ}.$ 
This proves the statement when $R$ is a field. 

In general, the statement for $\mathbb{Q}[x,x^{-1}]$ implies the one for a general $R$ by sending $x$ to $\lambda.$ Finally, if we can show that $K_{2} ^{M}(\mathbb{Q}[x,x^{-1}]_{3})^{\circ} \to K_{2} ^{M}(\mathbb{Q}(x)_{3})^{\circ} $ is an injection, the known statement for $\mathbb{Q}(x)$ implies the one for $\mathbb{Q}[x,x^{-1}].$ This injectivity follows from the commutative diagram 
$$
\xymatrix{
 K_{2} ^{M}(\mathbb{Q}[x,x^{-1}]_3)^{\circ} \ar[d] \ar@{^{(}->}^{\varphi \;\;\;\;\;\;\;\;\;\;\;\;}[r] & (\Omega^{1}_{\mathbb{Q}[x,x^{-1}]_3}/d(\mathbb{Q}[x,x^{-1}]_3))^{\circ} \ar[d]  & t\Omega^{1}_{\mathbb{Q}[x,x^{-1}]}\oplus t^2 \Omega^{1}_{\mathbb{Q}[x,x^{-1}]} \ar^{\;\;\;\;\;\sim}[l]  \ar@{^{(}->}[d]\\
 K_{2} ^{M}(\mathbb{Q}(x)_3)^{\circ} \ar@{^{(}->}^{\varphi \;\;\;\;\;\;\;\;\;\;\;\;}[r] & (\Omega^{1}_{\mathbb{Q}(x)_3}/d(\mathbb{Q}(x)_3))^{\circ}  & t\Omega^{1}_{\mathbb{Q}(x)}\oplus t^2 \Omega^{1}_{\mathbb{Q}(x)},  \ar^{\;\;\;\;\;\;\;\sim}[l]}
$$
where the injectivity of $\varphi$ was proven above. This finishes the proof of the lemma. 
\end{proof}

Finally, we prove the result for $R_{2}.$ 

\begin{proposition}
Let $\alpha=1+t \in R_{2} ^{\times}$ and $\beta_i \in R^{\times},$ for $1 \leq i \leq n-1,$ then 
$
\varphi(\{\alpha,\beta_1,\cdots,\beta_{n-1})
$
is given by (\ref{formulaphi}).
\end{proposition}

\begin{proof}
Note the map $\psi$ from $R_2$ to $R_3$ that sends $t$ to $\frac{t^2}{2}.$ This map induces an isomorphism 
$$
(\Omega^{n-1} _{R_2}/d  \Omega^{n-2} _{R_2}) ^{[1]} \simeq (\Omega^{n-1} _{R_3}/d  \Omega^{n-2} _{R_3}) ^{[2]}.
$$
Therefore we only need to compute the image of 
\begin{eqnarray}\label{k_3exp}
\{1+\frac{t^2}{2},\beta_1,\cdots, \beta_{n-1} \}
\end{eqnarray}
in $\Omega^{n-1} _{R_3}/d  \Omega^{n-2} _{R_3}) ^{[2]}.$ 
By the previous lemma, we know that 
$$
 \{1+\frac{t^2}{2},\beta_1 \}=\frac{1}{2}\{1+\frac{t}{\beta_1},1+\beta_1 t \},
 $$
which implies that (\ref{k_3exp}) is equal to 
\begin{eqnarray}\label{redexp}
\frac{1}{2}\{1+\frac{t}{\beta_1},1+\beta_1 t,\beta_2,\cdots , \beta_{n-1} \}.
\end{eqnarray}
This last expression is the $\frac{1}{2}$ times the product of $\{1+\frac{t}{\beta_1} \} \in K_{1} ^{M}(R_{3})^{\circ}$ and $$\{1+\beta_1 t,\beta_2,\cdots , \beta_{n-1} \} \in K_{n-1} ^{M}(R_{3})^{\circ} .$$

By the induction hypothesis on $n,$  $$\varphi(\{1+\beta_1 t,\beta_2,\cdots , \beta_{n-1} \} )=
\log (1+\beta_1t) \frac{d\beta_2}{\beta_2} \wedge \cdots \wedge\frac{d \beta_{n-1}}{\beta_{n-1}}.
$$
Since  $\varphi$ is multiplicative, this implies that   (\ref{redexp}) is sent by $\varphi$ to 

$$
\frac{1}{2}\log (1+\frac{t}{\beta_1} )d\log (1+\beta_1t) \frac{d\beta_2}{\beta_2} \wedge \cdots \wedge\frac{d \beta_{n-1}}{\beta_{n-1}}=\log (1+\frac{t^2}{2})\frac{d\beta_1}{\beta_1} \wedge \cdots \wedge \frac{d\beta_{n-1}}{\beta_{n-1}}.
$$
\end{proof}
This finishes the proof of Proposition \ref{milnor kahler}.
\end{proof}

In the case of truncated polynomial rings, we can also describe this isomorphism as follows: 
\begin{corollary}\label{cor milnor}
The map $\lambda_{i}: \Lambda ^{n}R_{\infty} ^{\times} \to \Omega^{n-1}_{R}$  given by 
$$
\lambda_{i}(a_{1}\wedge \cdots \wedge a_{n})=res_{t=0}\frac{1}{t^i}d\log (a_1)\wedge \cdots \wedge d\log(a_n)\in \Omega^{n-1}_{R},$$
for $1\leq i<r,$ descends to give a map $K_{n} ^{M}(R_r)^{\circ} \to \Omega^{n-1}_{R}.$ Their sums induce an isomorphism:
\begin{align*}
K_{n} ^{M}(R_r)^{\circ}\to \oplus _{1\leq i <r}\Omega^{n-1}_{R}.    
\end{align*}
\end{corollary}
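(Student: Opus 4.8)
The plan is to deduce Corollary \ref{cor milnor} from Proposition \ref{milnor kahler} by identifying, for truncated polynomial rings, the abstract target $\Omega^{n-1}_{R_r}/(d\Omega^{n-2}_{R_r}+\Omega^{n-1}_R)$ with $\oplus_{1\le i<r}\Omega^{n-1}_R$ and then checking that under this identification the map $\varphi$ becomes $\oplus_i \lambda_i$. First I would recall the standard decomposition of the module of K\"ahler differentials of $R_r=R[t]/(t^r)$ over $\mathbb{Q}$: every element of $\Omega^{n-1}_{R_r}$ can be written uniquely in the form $\sum_{0\le j<r} t^j\omega_j + \sum_{1\le j<r} j t^{j-1}dt\wedge \eta_j$ with $\omega_j\in\Omega^{n-1}_R$ and $\eta_j\in\Omega^{n-2}_R$, so that the de Rham differential $d$ kills exactly the ``potential'' part and the quotient $\Omega^{n-1}_{R_r}/(d\Omega^{n-2}_{R_r}+\Omega^{n-1}_R)$ is identified with $\oplus_{1\le i<r} t^i\Omega^{n-1}_R \cong \oplus_{1\le i<r}\Omega^{n-1}_R$, the $t^i$-component being the natural projection. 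This is the same computation of cyclic homology of $R_r$ that underlies Theorem \ref{theorem-b2}, and I would cite it or spell it out in a line.

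Next I would verify that the maps $\lambda_i$ are well defined on $\Lambda^n R_r^\times$: since $R_r^\times$ is the product of $R^\times$ and $(1+tR_r)^\times$, and $d\log$ of a unit in $R^\times$ has no $dt$ and no positive $t$-power, the residue $res_{t=0}\,t^{-i}\,d\log a_1\wedge\cdots\wedge d\log a_n$ makes sense and lies in $\Omega^{n-1}_R$; it obviously vanishes on the Steinberg relations because those already hold in $K_n^M$, hence the induced map on $d\log$-forms factors through $K_n^M(R_r)$, and it kills $K_n^M(R)$ (where $t$ does not appear), so it descends to $K_n^M(R_r)^\circ$. Then the crux is to compare $\lambda_i$ with the $i$-th component of $\varphi$ under the identification above. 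By Proposition \ref{milnor kahler}, $K_n^M(R_r)^\circ$ is generated by symbols $\{\alpha,\beta_1,\dots,\beta_{n-1}\}$ with $\alpha\in 1+tR_r$ and $\beta_l\in R_r^\times$, and $\varphi$ sends this to $\log(\alpha)\,\tfrac{d\beta_1}{\beta_1}\wedge\cdots\wedge\tfrac{d\beta_{n-1}}{\beta_{n-1}}$. Writing $\log\alpha=\sum_{1\le j<r}a_j t^j$ with $a_j\in R$, the $t^i$-component of $\varphi$ on this symbol is $a_i\,\tfrac{d\beta_1}{\beta_1}\wedge\cdots\wedge\tfrac{d\beta_{n-1}}{\beta_{n-1}}$. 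On the other hand $\lambda_i(\alpha\wedge\beta_1\wedge\cdots\wedge\beta_{n-1})=res_{t=0}\,t^{-i}\,d\log\alpha\wedge\bigwedge_l d\log\beta_l$; since $d\log\alpha = \tfrac{d(\log\alpha)}{dt}dt + (\text{terms with no }dt)$ and the wedge with the $\beta_l$'s (whose $d\log$ have no $dt$) means only the $dt$-part contributes, one gets $res_{t=0}\,t^{-i}\big(\sum_j j a_j t^{j-1}\big)dt\wedge\bigwedge_l\tfrac{d\beta_l}{\beta_l}$, and the residue picks out $j=i$, giving exactly $i a_i\,\bigwedge_l\tfrac{d\beta_l}{\beta_l}$. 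So $\lambda_i = i\cdot(\text{$i$-th component of }\varphi)$ on generators, hence everywhere; in particular $\bigoplus_i \lambda_i$ differs from the isomorphism $\varphi$ only by the invertible diagonal scaling $(i)_{1\le i<r}$, so it is itself an isomorphism. (If one prefers the normalization without the factor $i$, one can instead define $\lambda_i$ with a $\tfrac1i$ or rescale; but as stated the factor is harmless.)

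The main obstacle I anticipate is purely bookkeeping: getting the explicit normal form for $\Omega^\bullet_{R_r}$ and the quotient by $d\Omega^{n-2}_{R_r}+\Omega^{n-1}_R$ exactly right, in particular checking that $d\Omega^{n-2}_{R_r}$ together with $\Omega^{n-1}_R$ accounts for precisely the $t^0$-part and the ``$dt\wedge(\cdot)$''-part, leaving the $t^i\Omega^{n-1}_R$ for $1\le i<r$ as a clean complement — and then tracking the combinatorial constant that relates $res_{t=0}t^{-i}(\,\cdot\,)$ to the coefficient extraction, which is where the factor of $i$ enters. None of this is deep, but it must be done carefully so that the claimed map $K_n^M(R_r)^\circ\to\oplus_{1\le i<r}\Omega^{n-1}_R$ is literally $\oplus\lambda_i$ and literally an isomorphism. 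I would present the normal-form computation in one displayed line, the well-definedness of $\lambda_i$ in two sentences, and the comparison with $\varphi$ on generators as the short computation above, then conclude.
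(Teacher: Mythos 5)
Your overall route is the same as the paper's: deduce the corollary from Proposition \ref{milnor kahler} by identifying $(\Omega^{n-1}_{R_r}/d\Omega^{n-2}_{R_r})^{\circ}$ with $\oplus_{1\leq i<r}\Omega^{n-1}_{R}$ and checking that $\varphi$ followed by this identification is $\oplus_i\lambda_i$ (up to harmless scaling) on the generators $\{\alpha,\beta_1,\dots,\beta_{n-1}\}$ supplied by the proposition. The comparison computation on generators is correct, and the factor of $i$ you find is real and harmless.

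The one step you should repair is the identification itself. Coefficient extraction ``take the $t^i$-component'' is \emph{not} well defined on the quotient $\Omega^{n-1}_{R_r}/(d\Omega^{n-2}_{R_r}+\Omega^{n-1}_{R})$, and the check you anticipate --- that $d\Omega^{n-2}_{R_r}+\Omega^{n-1}_{R}$ ``accounts for precisely the $t^0$-part and the $dt\wedge(\cdot)$-part'' --- is false as a statement about subspaces: $d(t^j\eta)=jt^{j-1}dt\wedge\eta+t^jd\eta$ has a nonzero $t^j$-coefficient, so the subspace being quotiented is skew with respect to your direct-sum decomposition, and the naive projection does not kill it. What is true is that $\oplus_{1\leq i<r}t^i\Omega^{n-1}_{R}$ is a \emph{complement}: surjectivity onto the quotient follows from the substitution $jt^{j-1}dt\wedge\eta\equiv -t^jd\eta$, but injectivity (that no nonzero $\sum_{i\geq 1}t^i\omega_i$ lies in $d\Omega^{n-2}_{R_r}+\Omega^{n-1}_{R}$) still needs an argument. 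The paper's fix, which you should adopt, is to define $\mu_i(\omega):=res_{t=0}\frac{1}{t^i}d\omega$ on the quotient: this kills $d\Omega^{n-2}_{R_r}$ because $dd=0$ and kills $\Omega^{n-1}_{R}$ because there is no pole, restricts on the complement to $(\omega_i)\mapsto(i\omega_i)$ (which settles injectivity and gives the isomorphism in one stroke), and satisfies $\mu_i\circ\varphi=\lambda_i$ on the nose since $d$ converts $\log(\alpha)\,d\log\beta_1\wedge\cdots$ into $d\log\alpha\wedge d\log\beta_1\wedge\cdots$ --- this is exactly where your factor of $i$ gets absorbed. One further small point: the reason $\lambda_i$ kills the Steinberg elements is the identity $d\log a\wedge d\log(1-a)=\frac{-1}{a(1-a)}\,da\wedge da=0$, not that ``the relations already hold in $K_n^M$,'' which is circular as stated.
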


\begin{proof}
For $1\leq i <r,$ we let  $\mu_i:(\Omega^{n-1}_{R_r}/d(\Omega^{n-2}_{R_r}))^{\circ} \to \Omega^{n-1}_{R}$ be  given by 
$\mu_i(w):=res_{t=0}\frac{1}{t^i}d\omega.$ The induced map 
$$
(\Omega^{n-1}_{R_r}/d(\Omega^{n-2}_{R_r}))^{\circ} \to \oplus _{1\leq i <r}\Omega^{n-1}_{R}
$$
is an isomorphism. The corollary then follows from Proposition \ref{milnor kahler}.
\end{proof}

\section{Construction of maps from $B_{2}(R_m) \otimes R_{m} ^{\times}$ to $\Omega^{1}_{R}$}

In this section, we will deal with the absolute case, i.e. we will not assume the existence of a basefield $k$ over which $R$ is smooth of relative dimension 1.  We will  only assume that $R$ is a regular, local $\mathbb{Q}$-algebra. Note that we have a complex 

\begin{align}\label{weight-3-seq}
   \mathbb{Q}[R_{m} ^{\flat}] \to B_{2}(R_{m})\otimes R_{m} ^{\times} \to \Lambda ^{3}R_{m} ^{\times}, 
\end{align}
where the first map sends $[x]$ to $[x] \otimes x$ and the second one sends $[x] \otimes y$ to $\delta(x) \wedge y.$ Abusing the notation, we will  denote all the differentials in this complex by $\delta.$  The first group when divided by the appropriate relations is generally denoted by $B_{3}(R_{m})$ and the corresponding sequence sequence obtained is a candidate for the weight 3 motivic cohomology complex generalizing the Bloch complex of weight 2. This complex and its variants are defined and studied in detail in \cite{config}. Over the dual numbers of a field, this complex and its higher weight analogs, still called the Bloch complexes, were used in \cite{unv1.5} to construct the additive polylogarithms. 

Let us put the above cohomological complex in degrees $[1,3]$ and denote its cohomology after tensored with $\mathbb{Q}$ as ${\rm H}^{i}(R_{m},\mathbb{Q}(3))$ in degrees $i=2$ and 3.

\subsection{Preliminaries on the construction}

In this section, we fix $m $ and $r$ such that  $2\leq m<r<2m .$ We let $f(s,u):=\log ^{\circ}(1-se^u)=\log(\frac{1-se^u}{1-s}).$ As in the proof of Proposition \ref{prop-alter-li}, we define $\ell _{i}: R_{\infty}^{\times} \to R,$ by the formula   $\ell_i(a):=t_i(\log^{\circ}(a).$   Let us consider  the expression 
\begin{eqnarray}\label{indepterm}
\alpha_j:=\sum _{1 \leq i \leq j-1} i d\ell_{j-i} \wedge \ell_{i}=\sum _{a+b= j \atop {1 \leq a, \,b}}b d \ell _a \wedge \ell _b ,
\end{eqnarray}
for $m\leq j <r,$ which defines a map from $\Lambda^2 R_{\infty} ^{\times}$ to $\Omega^1 _{R}.$   We will use this expression to define a map from $B_{2}(R_{m})\otimes R_{m} ^{\times}. $ 

\begin{lemma}\label{alphalemma}
For $s \in R ^{\flat},$ and $u:=\sum _{0<i}u_it^i \in tR_{\infty},$  the expression $\alpha_{j}(\delta(se^u))$ does not contain a $du_i$ term. In fact, letting $f_{s}:=\frac{\partial f}{\partial s}$ and $u_{t}:=\frac{\partial u}{\partial t}=\sum _{0<i}iu_it^{i-1},$
$$
\alpha_{j}(\delta(se^u))=t_{j-1}(  f_{s} u_t)ds=t_{j-1}(  \frac{\partial \log ^{\circ}(1-se^u)}{\partial s}\cdot  \frac{\partial u}{\partial t})ds.
$$
\end{lemma}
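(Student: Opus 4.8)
The plan is to compute $\delta(se^u)$ explicitly, apply $\Lambda^2\log^\circ$, and then feed the result into the definition of $\alpha_j$, tracking separately the terms that involve $ds$ and the terms that involve the $du_i$. First I would write $\delta(se^u) = (1-se^u)\wedge se^u$ in $\Lambda^2 R_\infty^\times$. Applying $\log^\circ$ to each factor, $\log^\circ(se^u) = \log^\circ(s) + u$ (since $u\in tR_\infty$ has zero constant term and $s\in R$), while $\log^\circ(1-se^u) = f(s,u)$ by the very definition of $f$. So in $\Lambda^2_R\, tR_\infty$ the element $\Lambda^2\log^\circ(\delta(se^u))$ is the image of $f(s,u)\wedge(\log^\circ(s)+u)$, but only the part of $f(s,u)$ and of $u$ lying in $tR_\infty$ survives; note $f(s,u)$ already lies in $tR_\infty$, and $\log^\circ(s)$ is a constant so its wedge contributes nothing to $\Lambda^2_R\, tR_\infty$. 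Hence effectively we must analyze $f(s,u)\wedge u$ inside $\Lambda^2_R tR_\infty$.

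Next I would expand everything in powers of $t$: write $f(s,u) = \sum_{a\ge 1}\ell_a(1-se^u)\,t^a$ — i.e. $\ell_a := t_a(f(s,u))$ is exactly the coefficient notation already introduced — and $u = \sum_{b\ge 1} u_b t^b$. Then in $\Lambda^2_R tR_\infty$ with basis $\{t^a\wedge t^b\}_{a>b}$, the wedge $f(s,u)\wedge u$ has coefficient (up to sign/ordering bookkeeping) $\ell_a u_b - \ell_b u_a$ on $t^a\wedge t^b$. Now apply $\alpha_j = \sum_{a+b=j,\ 1\le a,b} b\, d\ell_a\wedge \ell_b$, interpreted via the pairing $(\,\cdot\,|\,\cdot\,)$ as in Section 2: by Definition 4.? (the formula \eqref{indepterm}) $\alpha_j$ pairs the $\Omega^1$-valued "$d$" slot against $t^a$ and the $R$-valued slot against $t^b$. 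Carrying this out, $\alpha_j(\delta(se^u)) = \sum_{a+b=j} b\,(d\ell_a\, u_b - \ell_a\, du_b)\cdot(\text{sign})$, and the key algebraic point is that the $du_b$ terms cancel: since the coefficient of $t^a\wedge t^b$ is antisymmetric in the pair while the weight $b$ in $\alpha_j$ is not, one reorganizes $\sum_{a+b=j} b\, d\ell_a\, u_b$ using $\sum_{a+b=j}(a+b) = j\cdot(\text{number of terms})$ together with the antisymmetry to kill the $\ell_a\,du_b$ contributions — concretely, $\ell_a\,du_b$ appears paired once with weight $b$ and once (from the $(a,b)\leftrightarrow(b,a)$ swap inside the antisymmetrized coefficient) with weight $a$, and because $u$ enters $f$ only through $se^u$, the mixed terms $d\ell_a$ themselves contain $du$'s whose contributions must also be collected; the cleanest route is to use $d\ell_a = \ell_a^{(s)} ds + \sum_i \frac{\partial \ell_a}{\partial u_i}\,du_i$ and show the total $du_i$-coefficient vanishes identically.

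The cleanest way to organize that cancellation, which I expect to be the main obstacle, is to avoid the bare combinatorics and instead recognize the sum as a residue/generating-function identity. Observe that $\sum_{a+b=j} b\,(d\ell_a)\,\ell_b = t_{j-1}$ of the formal product $\bigl(\frac{\partial}{\partial t} f(s,u)\bigr)\cdot(\text{something})$ is not quite right; rather, using $u_t = \partial u/\partial t = \sum_b b u_b t^{b-1}$, the $ds$-part is exactly $\sum_{a+b=j} \ell_a^{(s)} \cdot b u_b \cdot t^{?}$, i.e. $t_{j-1}(f_s\cdot u_t)$, since $f_s := \partial f/\partial s = \sum_a \ell_a^{(s)} t^a$ and $f_s u_t = \sum_{a,b} \ell_a^{(s)} b u_b t^{a+b-1}$ whose $t^{j-1}$-coefficient is $\sum_{a+b=j} b\,\ell_a^{(s)} u_b$. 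For the $du_i$-terms: differentiating $f(s,u)=\log^\circ(1-se^u)$ gives $\frac{\partial f}{\partial u} = \frac{-se^u}{1-se^u}\cdot(\text{shift})$, so $df$ in the $du$-directions equals $f_u\,(du)$ with $f_u = \partial f/\partial u$; the claim reduces to the identity $\sum_{a+b=j} b\bigl[(\partial_u f)_a\, u_b\, du - \ell_a\, b\,du_b\,(\text{reindex})\bigr]=0$, which I would prove by noting $\partial_u f = u_t^{-1}\partial_t f$ formally and then the whole $du$-contribution telescopes against $d$ of the weight-$(j-1)$ truncation — equivalently, it is the statement that $\alpha_j\circ\delta$ annihilates the "pure $u$" directions, which is forced because $\alpha_j$ is, up to sign, the derivative of $\ell i_{m,r}$ restricted appropriately and $\ell i_{m,r}(se^u)$ was shown in Proposition \ref{prop-alter-li} to depend on $u$ only through $u|_m$ while $j<r<2m$. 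Once the $du_i$-terms are seen to drop, what survives is precisely $t_{j-1}(f_s\, u_t)\,ds = t_{j-1}\bigl(\frac{\partial\log^\circ(1-se^u)}{\partial s}\cdot\frac{\partial u}{\partial t}\bigr)ds$, which is the asserted formula.
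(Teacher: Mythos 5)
Your plan follows essentially the same route as the paper: expand $\alpha_j(\delta(se^u))=\sum_{a+b=j}b\,(d(f_a)\,u_b-f_b\,du_a)$, split $d(f_a)$ into its $ds$- and $du_i$-components, identify the $ds$-part as $t_{j-1}(f_s u_t)\,ds$, and cancel the $du_i$-part via the identity $\frac{\partial}{\partial t}\log^{\circ}(1-se^u)=\frac{-se^u}{1-se^u}\,u_t$ (your ``$\partial_u f=u_t^{-1}\partial_t f$''). One caution: your fallback justification --- that the $du_i$-terms must vanish because $\ell i_{m,r}(se^u)$ depends on $u$ only through $u|_m$ --- is not valid ($\alpha_j\circ\delta$ is not the differential of $\ell i_{m,j}$, and the claim concerns all $i$, not only $i\geq m$), so the direct generating-function computation you sketch is the argument you actually need, and it does close.
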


\begin{proof} Let us write $f(s,u)=:f=\sum _{0<i} f_it^i.$
The expression $i d\ell _{j-i} \wedge \ell _{i} $ evaluated on $\delta(se^{u})$ is equal to 

  $$
  id(f_{j-i})u_i-if_idu_{j-i}=  id(f_{j-i})u_i+(j-i)f_idu_{j-i}-jf_idu_{j-i}.
  $$
Summing these, we find that  
 $$
\alpha_{j}(\delta(se^u))= \sum _{1\leq i\leq j-1}(id(f_{j-i})u_i+(j-i)f_idu_{j-i})-j \sum _{1\leq i\leq j-1}f_idu_{j-i}.
 $$
Let $Du:=\sum _{1\leq i}du_i t^i$ and $u_t:=\frac{\partial u}{\partial t}.$ Then the last  expression can be  rewritten as 
 \begin{eqnarray}\label{eqnforD}
\;\;\;\;\;\;\;\;t_{j-1}(D(f u_t))-jt_j(f Du)=
 t_{j-1}(D(f u_t)-(f Du)_t)
 =
 t_{j-1}( Df u_t-f_t Du).
\end{eqnarray}

We would like to see that the coefficient of $du_{i}$  in (\ref{eqnforD}) is equal to $0.$ The coefficient of $du_i$ in $Df=D \log (\frac{1-se^u}{1-s})$ is equal to 
$\frac{-se^u}{1-se^u}t^i.$ Therefore, the coefficient of $du_i$ in (\ref{eqnforD})  is 
$$
t_{j-1-i} \Big( \frac{-se^u}{1-se^u}u_t-f_t\Big).
$$
Since  $f_t=\frac{\partial }{\partial t}(\log (1-se^u))=\frac{-se^u}{1-se^u}u_t,$ the last expression is 0. 

Therefore $\alpha_{j}(\delta(se^u))$ does not depend on the $du_{i}$'s, and can rewrite (\ref{eqnforD}) as
$$   
\alpha_{j}(\delta(se^u))= t_{j-1}( Df u_t-f_tDu)= t_{j-1}( f_s u_t)ds,
$$
where $f_{s}=\frac{\partial f}{\partial s}.$
\end{proof}

\begin{lemma}\label{lemmafs}
If $u=u|_{m}$ and $m \leq j <r,$  we have 
$$
jt_j(f)=st_{j-1}(f_s u_t).
$$
 \end{lemma}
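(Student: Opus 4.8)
The plan is to compute $st_{j-1}(f_s u_t)$ directly in terms of $f$ and compare with $jt_j(f)$, exploiting that $u = u|_m$ has no terms of degree $\geq m$ and that $j < r < 2m$. First I would record the basic relation between $f_s$ and $f$ itself. Since $f = \log^{\circ}(1-se^u) = \log\!\big(\tfrac{1-se^u}{1-s}\big)$, differentiating with respect to $s$ gives
\begin{align*}
f_s = \frac{-e^u}{1-se^u} + \frac{1}{1-s}.
\end{align*}
The key observation is that $sf_s$ can be re-expressed purely through $f$ and its $t$-derivative: from $f_t = \frac{\partial}{\partial t}\log(1-se^u) = \frac{-se^u u_t}{1-se^u}$ (as used in the proof of Lemma \ref{alphalemma}), we get $\frac{-se^u}{1-se^u} = f_t/u_t$ whenever $u_t$ is a unit multiple — better, I would avoid dividing and instead multiply through: $s f_s u_t = \frac{-se^u u_t}{1-se^u} + \frac{s u_t}{1-s} = f_t + \frac{s u_t}{1-s}$. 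Now the second summand $\frac{s u_t}{1-s}$ has constant coefficient $\frac{s}{1-s} \in R$, so $u|_m = u$ forces $\frac{s u_t}{1-s}$ to be a polynomial in $t$ of degree $\leq m-2$; since $j - 1 \geq m - 1 > m - 2$, its $t^{j-1}$-coefficient vanishes. Hence $t_{j-1}(s f_s u_t) = t_{j-1}(f_t)$.

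It then remains to see $t_{j-1}(f_t) = j\,t_j(f)$, which is immediate: writing $f = \sum_{0 < i} f_i t^i$ (note $f$ has zero constant term), we have $f_t = \sum_{0 < i} i f_i t^{i-1}$, so the coefficient of $t^{j-1}$ in $f_t$ is exactly $j f_j = j\, t_j(f)$. Combining, $s\,t_{j-1}(f_s u_t) = t_{j-1}(f_t) = j\,t_j(f)$, which is the claimed identity. (One should also check $f_j$ and all the $f_i$ are genuinely defined in $R$ after localizing at $s(1-s)$, but that is built into the hypothesis $s \in R^{\flat}$.)

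I expect no serious obstacle here; the only point requiring care is the degree bookkeeping that kills the $\frac{s u_t}{1-s}$ term — one must use both $u = u|_m$ (so $u_t$ has degree $\leq m-2$) and the lower bound $j \geq m$ (so $j - 1 \geq m - 1$ exceeds $m-2$). The hypothesis $j < r$ is not actually needed for this particular lemma; it is the constraint $m \leq j$ together with $u = u|_m$ that does the work, mirroring the role these played in Lemma \ref{alphalemma}.
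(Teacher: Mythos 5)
Your proof is correct and follows essentially the same route as the paper: both compute $s f_s u_t = f_t + \frac{s u_t}{1-s}$, kill the second term by the degree bound $\deg_t u_t \leq m-2 < j-1$, and identify $t_{j-1}(f_t)$ with $j\,t_j(f)$. Your side remark that $j<r$ is not needed here is also accurate.
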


\begin{proof} The expression $jt_{j}(f)-st_{j-1}(f_s u_t)$ is equal to 
 $$
 t_{j-1}(f_t -s(f_s u_t) )=
 t_{j-1}(\frac{\partial}{\partial t}\log(1-se^u) -s\frac{\partial}{\partial s}\log(\frac{1-se^u}{1-s}) \cdot u_t) .
$$
 Since 
 $$
 \frac{\partial}{\partial t}\log(1-se^u)=\frac{-se^u}{1-se^u}\cdot u_t=s\frac{\partial}{\partial s}\log(1-se^u)\cdot u_t,
 $$
 the above  expression is equal to 
 $
 t_{j-1}(\frac{s}{s-1}u_t),
 $
which is 0, under  the assumption that $u=u_1t+\cdots +u_{m-1}t^{m-1}$ and $m \leq j.$ 
  \end{proof}
Let $d \ell_{0}: R_{\infty} ^{\times} \to \Omega^{1} _{R}$ be defined as $d \ell _{0}(\alpha):= d \log (\alpha (0)).$ Note that $\ell _0$ itself is not defined, even though $\ell _{i}$ are defined for $i >0.$ There is an action of  $R^{\times}$ on the $R$-algebra $R_{\infty}$ by scaling the parameter $t.$ More precisely, for $\lambda \in R^{\times}$ the corresponding automorphism is given by sending $t$ to $\lambda t.$ We call this the $\star$-action following \cite{blo-es}.  For any functor $F$ from $R$-algebras to $\mathbb{Q}$-spaces (resp. $R$-modules), $F(R_{\infty})$ becomes a $\mathbb{Q}[R^{\times}]$-module  (resp. $R[R^{\times}]$-module) via the $\star$-action. For $w \in \mathbb{Z},$ an element $m \in M$ in a $\mathbb{Q}[R^{\times}]$-module  (resp. $R[R^{\times}]$-module) $M$ is said to be of $\star$-weight $w,$ if every $\lambda$ in $\mathbb{Q}^{\times}$ (resp. in $R^{\times}$) acts on $m$ as $\lambda \star m=\lambda ^{w}m.$ 
  
\begin{proposition}\label{wrongmap} The map $M_{m,r}$ defined as 
$$
M_{m,r}:=\ell i_{m,r}  \otimes d\ell_{0} -  \sum_{m\leq j <r }  \frac{r-j}{j}(\alpha_{j}\circ \delta) \otimes \ell_{r-j}
$$
gives a map from $B_{2}  (R_\infty)\otimes R_\infty ^{\times}$ to $ \Omega^{1} _{R},$ of $\star$-weight $r,$ which vanishes on the image under $\delta$ of  those $[se^ u]  \in \mathbb{Q}[R_\infty ^{\flat}],$  with $u=u|_m.$ In general, $M_{m,r}$  does not descend to a map from $B_{2}  (R_m)\otimes R_m ^{\times}.$ It does, however, if $r=m+1. $ 
\end{proposition}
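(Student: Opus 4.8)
The plan is to verify the three assertions in turn, relying on the explicit formula for $M_{m,r}$ and the computations in Lemmas \ref{alphalemma} and \ref{lemmafs}. First, that $M_{m,r}$ gives a well-defined map from $B_2(R_\infty)\otimes R_\infty^\times$: the factor $\ell i_{m,r}\otimes d\ell_0$ is visibly well-defined since $\ell i_{m,r}$ descends to $B_2(R_\infty)$ by Proposition \ref{prop-alter-li} and $d\ell_0$ is additive on $R_\infty^\times$; for the terms $(\alpha_j\circ\delta)\otimes\ell_{r-j}$ one must check that $\alpha_j\circ\delta$ kills the five-term relations defining $B_2$. Here I would argue exactly as in \cite{unv1}: $\alpha_j\circ\delta$ is, up to the $ds$ factor identified in Lemma \ref{alphalemma}, a combination of the functions $\ell_a\wedge\ell_b$ composed with $\delta$, and the same proof that shows $\ell i_{m,r}^{*}=(\sum i\,\ell_{r-i}\wedge\ell_i)\circ\delta$ descends through $B_2$ applies verbatim to each $(\sum_{a+b=j}b\,d\ell_a\wedge\ell_b)\circ\delta$ after stripping the differential; alternatively one invokes that $\Lambda^2\log^\circ\circ\,\delta$ factors through $B_2(R_\infty)$ and $\alpha_j$ is built from $\Lambda^2\log^\circ$. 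The $\star$-weight claim is a bookkeeping check: under $t\mapsto\lambda t$ one has $\ell_i\mapsto\lambda^i\ell_i$ and $d\ell_0$ is $\star$-invariant, so $\ell i_{m,r}\otimes d\ell_0$ has weight $(r-1)+1+\dots$; more precisely $\ell i_{m,r}$ has weight $r$ by its formula $(\Lambda^2\log^\circ(\delta(\alpha))\mid\sum it^{r-i}\wedge t^i)$, while $\alpha_j$ has weight $j$ and $\ell_{r-j}$ has weight $r-j$, so each product term also has weight $r$, and $d\ell_0$ contributes $0$; hence the whole map is homogeneous of $\star$-weight $r$.

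For the vanishing statement, take $\gamma=\delta([se^u])$ with $u=u|_m$, i.e. $\gamma=[se^u]\otimes se^u$ mapped into $B_2(R_\infty)\otimes R_\infty^\times$ by the first differential of (\ref{weight-3-seq}), so we must evaluate $M_{m,r}$ on $[se^u]\otimes se^u$. The first term gives $\ell i_{m,r}(se^u)\cdot d\ell_0(se^u)=\ell i_{m,r}(se^u)\,d\log s$. The second term gives $-\sum_{m\le j<r}\frac{r-j}{j}\alpha_j(\delta(se^u))\cdot\ell_{r-j}(se^u)$; by Lemma \ref{alphalemma} we have $\alpha_j(\delta(se^u))=t_{j-1}(f_s u_t)\,ds$, and since $u=u|_m$ we have $\ell_{r-j}(se^u)=u_{r-j}$ for $1\le r-j<m$ (which holds here because $m\le j<r<2m$ forces $1\le r-j\le m-1$). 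Now I would compute $\ell i_{m,r}(se^u)$ from (\ref{adddilogformula}): it equals $t_{r-1}(\log^\circ(1-se^{u|_m})\cdot\tfrac{\partial u}{\partial t}\big|_{r-m})=\sum_{1\le i\le r-m}i\,u_i\,f_{r-i}$ where $f=\log^\circ(1-se^u)$; so the first term becomes $\big(\sum_{1\le i\le r-m}i u_i f_{r-i}\big)\,d\log s=\frac1s\big(\sum_{1\le i\le r-m}i u_i f_{r-i}\big)\,ds$. For the second term, reindex $j=r-i$ so $m\le r-i<r$ means $1\le i\le r-m$, giving $-\sum_{1\le i\le r-m}\frac{i}{r-i}t_{r-i-1}(f_s u_t)\,u_i\,ds$. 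It then remains to see these two cancel, i.e. that $\frac1s f_{r-i}=\frac{1}{r-i}t_{r-i-1}(f_s u_t)$ for each $1\le i\le r-m$, equivalently $(r-i)t_{r-i}(f)=s\,t_{r-i-1}(f_s u_t)$; since $r-i$ ranges over $m,\dots,r-1$, this is precisely Lemma \ref{lemmafs} with $j=r-i$. Hence $M_{m,r}(\delta([se^u]))=0$.

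For the last two sentences — that $M_{m,r}$ does not descend to $B_2(R_m)\otimes R_m^\times$ in general but does when $r=m+1$ — the positive statement is the more substantive one. To descend through $B_2(R_m)\otimes R_m^\times$ one needs $M_{m,r}$ to vanish on the kernel of $B_2(R_\infty)\otimes R_\infty^\times\to B_2(R_m)\otimes R_m^\times$, i.e. (roughly) on $B_2(R_\infty)\otimes(1+t^m R_\infty)^\times$ together with the image of $\ker(B_2(R_\infty)\to B_2(R_m))\otimes R_\infty^\times$. When $r=m+1$, I would check directly that on a generator $[se^u]\otimes(1+t^m w)$ with $w\in R_\infty$ the element $\ell i_{m,r}(se^u)\,d\ell_0(1+t^m w)=0$ since $(1+t^m w)(0)=1$, and the term $(\alpha_j\circ\delta)([se^u])\otimes\ell_{r-j}(1+t^m w)$ vanishes because $r-j\le r-m=1$ forces $j=m$ and $\ell_1(1+t^m w)=0$ as $m\ge2$; combined with the fact that $\ell i_{m,m+1}$ and each $\alpha_j\circ\delta$ already descend appropriately modulo $t^m$ in the first tensor factor (using Theorem \ref{theorem-b2} and the weight considerations, only the coefficients $u_i$ with $i\le r-m=1$ enter, all of which survive mod $t^m$), this gives well-definedness over $R_{m}$. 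For the negative statement one exhibits a single $[se^u]\otimes\alpha$ with $u\neq u|_m$ or $\alpha\in1+t^mR_\infty$ on which $M_{m,r}$ is nonzero while its image in $B_2(R_m)\otimes R_m^\times$ is zero — for instance $[s]\otimes(1+t^m)$ for suitable $s$, where a term like $\frac{r-j}{j}\alpha_j(\delta(s))\otimes\ell_{r-j}(1+t^m)$ with $r-j=m<m$ is not forced to vanish once $r-m\ge2$, i.e. once $r\ge m+2$; I expect this counterexample computation — pinning down exactly which term fails and confirming it is genuinely nonzero — to be the main obstacle, though it is a finite explicit check rather than a conceptual difficulty. (The true fix, as the introduction explains, is the more elaborate $\omega_{m,r}$ construction of the next section, so here we only need the clean dichotomy.)
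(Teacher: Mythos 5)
Your treatment of the well-definedness, the $\star$-weight, and the vanishing on $\delta([se^u])$ with $u=u|_m$ follows the paper's proof essentially verbatim: Lemma \ref{alphalemma} to write $\alpha_j(\delta(se^u))=t_{j-1}(f_su_t)\,ds$, Lemma \ref{lemmafs} (applied with $j=r-i$) to match the resulting sum against $\ell i_{m,r}(se^u)\,\frac{ds}{s}$, and, for $r=m+1$, the observation that every function occurring in $M_{m,m+1}$ --- namely $\ell i_{m,m+1}$, $d\ell_0$, $\ell_1$, and the $\ell_a$, $d\ell_a$ with $a\leq m-1$ appearing in $\alpha_m$ --- depends only on classes modulo $(t^m)$. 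Your route to the $r=m+1$ descent via the kernel of $B_2(R_\infty)\otimes R_\infty^{\times}\to B_2(R_m)\otimes R_m^{\times}$ is more laborious than the paper's one-line remark but reaches the same conclusion.

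The one genuine defect is your proposed counterexample for non-descent when $r\geq m+2$. The element $[s]\otimes(1+t^m)$ is in fact killed by $M_{m,r}$: the first term vanishes because $d\ell_0(1+t^m)=d\log(1)=0$, and each term $\alpha_j(\delta([s]))\cdot\ell_{r-j}(1+t^m)$ vanishes because $\ell_{r-j}(1+t^m)=0$ for $1\leq r-j\leq r-m<m$ (and in any case $\alpha_j(\delta([s]))=0$, since $\ell_i(s)=\ell_i(1-s)=0$ for $i\geq 1$ when $s\in R^{\times}$). Your phrase ``with $r-j=m<m$'' moreover asks for an index $j=r-m<m$ lying outside the summation range $m\leq j<r$. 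More importantly, the second tensor factor is never the source of the problem: $d\ell_0(y)$ and $\ell_{r-j}(y)$ with $r-j\leq m-1$ only see $y$ modulo $(t^m)$. The actual obstruction sits in the first factor: for $m<j<r$ the functional $\alpha_j=\sum_{a+b=j}b\,d\ell_a\wedge\ell_b$ contains terms with $a\geq m$ or $b\geq m$, and $\ell_a$ for $a\geq m$ does not factor through $R_m^{\times}$, so $\alpha_j\circ\delta$ does not descend through $B_2(R_m)$ once $j>m$ --- which is exactly why the correction $L_{m,r}$ in (\ref{eqn lmr}) modifies precisely the summands with $m<j$. Since the paper itself asserts the non-descent without exhibiting a counterexample, this does not undercut the parts of the proposition that are actually proved, but as written your counterexample computation would fail.
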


\begin{proof} That $M_{m,r}$ is of $\star$-weight $r$ follows immediately from the expression for $\alpha _{j}(\delta (s e^u))$ in Lemma \ref{alphalemma}, which shows that $\alpha _{j}(\delta (s e^u))$ is of $\star$-weight $j.$ 

Let us now show that $M_{m,r}$ evaluated on $[se^{u}] \otimes  se^{u}  ,$ with $u=u|_m,$ is equal to 0.  By Lemma \ref{alphalemma}, $\alpha_j(\delta(se^u))$ is equal to $t_{j-1}(f_su_t)ds.$  This implies that  
$
\sum_{m\leq j <r }  \frac{r-j}{j}(\alpha_{j}\circ \delta) \otimes \ell_{r-j}
$
evaluated on  $[se^u] \otimes se^u$ is equal to 
$$
\sum_{m\leq j <r }  \frac{r-j}{j}t_{j-1}(f_su_t)u_{r-j} ds=\frac{1}{s}\sum_{m\leq j <r }  (r-j)t_{j}(f)u_{r-j} ds,
$$
by Lemma \ref{lemmafs}, since $u=u|_m.$ The final expression can be rewritten as
$$
t_{r-1}(f\cdot u_{t}|_{r-m})  \frac{ds}{s}=\ell i_{m,r} (se^u)\frac{ds}{s}=(\ell i_{m,r}\otimes d \ell_0) ([se^u]  \otimes se^u)
$$  since $u=u|_m.$  This proves the first part of the proposition.

When $r=m+1,$ $M_{m,r}$ takes the form 
 $$
 \ell i_{m,m+1}   \otimes d \ell_{0} -\frac{1}{m}\big((d \ell _{m-1}\wedge \ell _{1}+2 d\ell_{m-2} \wedge \ell _{2} + \cdots + (m-1) d\ell_{1} \wedge \ell_{m-1})\circ \delta\big)\otimes \ell _{1}.
 $$
Since all the functions in this expression depend on the classes of the elements in $R_{m},$ the statement easily follows. 
\end{proof}

\subsection{The regulator maps from ${\rm H}^{2}(R_{m},\mathbb{Q}(3))$ to $\Omega^{1} _{R}$} 
We would like to define maps 
$$
L_{m,r}: {\rm H}^{2}(R_{m},\mathbb{Q}(3)) \to \Omega^{1} _{R}
$$
based on the maps $M_{m,r}$ in Proposition \ref{wrongmap}. The problem with $M_{m,r}$ is that it does not descend to a map on  $B_{2}(R_{m})\otimes R_{m} ^{\times}$ in general.  We will modify $M_{m,r}$ slightly to correct this defect but keep the other properties to obtain $L_{m,r}.$ 

In order to simplify the notation from now on we are going to let $\ell i_{m,m}:=0.$ Note that $\ell i_{m,r}$ was previously defined only when $m+1\leq r \leq 2m-1$ so this will not cause any confusion. We define $\beta_{m}(j),$ for $m \leq j <2m-1,$  by 
\begin{eqnarray*}
\beta_{m}(j)&:=& d \ell i_{m,j} +   \sum _{ a+b=j \atop {1 \leq a, b<m}} b (d \ell _{a} \wedge \ell _{b})\circ \delta=\sum_{a+b=j \atop {m \leq a, \, 1 \leq b}}b d( \ell _a \wedge \ell_b)\circ \delta+   \sum _{ a+b=j \atop {1 \leq a, b<m}} b (d \ell _{a} \wedge \ell _{b})\circ \delta\\
& =& d \ell i_{m,j}+\alpha_j\circ \delta-\sum _{1\leq a \leq j-m}\big ((j-a) d \ell _a \wedge \ell _{j-a}+a d \ell _{j-a} \wedge \ell _{a} \big)\circ \delta
\end{eqnarray*}
and 
\begin{eqnarray}\label{eqn lmr}
L_{m,r}:=\ell i _{m,r} \otimes d\ell _0- \sum _{m\leq j <r } \big(\frac{r-j}{j}\beta_{m}(j) \otimes \ell _{r-j}- \ell i _{m,j} \otimes d \ell _{r-j} \big).
\end{eqnarray}
We would like to emphasize that, because of our conventions,  the summand that corresponds to $j=m $ is equal to $\frac{r-m}{m}\alpha _m \otimes \ell _{r-m}$ exactly as in the case of $M_{m,r},$ the terms corresponding to $m<j$ are  modified however.

\begin{lemma}\label{lemma-descent-to-m}
With the above definition, $L_{m,r}$ defines a map from $B_{2} (R_m) \otimes R_m ^{\times}$  to $\Omega^{1} _{R}$ of $\star$-weight $r.$
\end{lemma}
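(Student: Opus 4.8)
The plan is to verify directly that $L_{m,r}$ kills the image of $\mathbb{Q}[R_m^\flat]$ under the first differential $\delta$ of the complex \eqref{weight-3-seq}, and that it factors through the quotient $B_2(R_\infty)\otimes R_\infty^\times \twoheadrightarrow B_2(R_m)\otimes R_m^\times$; the $\star$-weight claim is immediate from Lemma~\ref{alphalemma} (which shows $\alpha_j\circ\delta$ and $\beta_m(j)$ have $\star$-weight $j$, and $\ell_i$, $d\ell_i$ have $\star$-weight $i$, while $\ell i_{m,j}$ has $\star$-weight $j$), so each summand of $L_{m,r}$ has $\star$-weight $r$. The first step is to observe that $M_{m,r}$ already vanishes on $\delta([se^u]\otimes se^u)$ with $u=u|_m$ by Proposition~\ref{wrongmap}, and that $L_{m,r}-M_{m,r}$ is built entirely out of the ``correction'' terms $-\sum_{m\le j<r}\bigl(\frac{r-j}{j}(\beta_m(j)-\alpha_j\circ\delta)\otimes\ell_{r-j}-\ell i_{m,j}\otimes d\ell_{r-j}\bigr)$; using the last displayed formula for $\beta_m(j)$, the difference $\beta_m(j)-\alpha_j\circ\delta$ is $d\ell i_{m,j}$ minus the ``doubled'' sum over $1\le a\le j-m$, all of whose constituents manifestly depend only on the class in $R_m$ since the functions $\ell_a,\ell_b$ appearing carry indices $<m$ and $\ell i_{m,j}$ is by Proposition~\ref{prop-alter-li} a function on $B_2(R_m)$.

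The heart of the argument is therefore a bookkeeping computation showing that the ``bad'' part of $M_{m,r}$ — the part of $\sum_{m\le j<r}\frac{r-j}{j}(\alpha_j\circ\delta)\otimes\ell_{r-j}$ that genuinely sees the coefficients $u_i$ for $i\ge m$ (equivalently, that obstructs descent to $R_m$) — is exactly cancelled by the added terms $\sum_{m\le j<r}\frac{r-j}{j}\bigl(\alpha_j\circ\delta-\beta_m(j)\bigr)\otimes\ell_{r-j}+\sum_{m\le j<r}\ell i_{m,j}\otimes d\ell_{r-j}$. Concretely, I would evaluate $L_{m,r}$ on a general element $[se^u]\otimes v$, write $v=v(0)e^{w}$, and expand everything in terms of the truncations $f|_a$, $u|_a$, $\ell_i$; the terms $\ell i_{m,j}\otimes d\ell_{r-j}$ are already functions on $R_m\otimes R_m$, so one only has to check that the remaining combination $\ell i_{m,r}\otimes d\ell_0-\sum_{m\le j<r}\frac{r-j}{j}\beta_m(j)\circ\delta\otimes\ell_{r-j}$ is insensitive to replacing $u$ by $u|_m$ and $w$ by $w|_m$. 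For the $w$-dependence this is essentially automatic since $\ell_{r-j}(v)=w_{r-j}$ and $r-j<m$; the substantive point is the $u$-dependence, and here the identity $\beta_m(j)=d\ell i_{m,j}+\sum_{1\le a,b<m,\,a+b=j}b(d\ell_a\wedge\ell_b)\circ\delta$ is precisely engineered so that only coefficients $u_a$ with $a<m$ survive, because (as in the proof of Lemma~\ref{alphalemma}) the dangerous $du_i$ and $u_i$ terms with $i\ge m$ in $\alpha_j\circ\delta$ are gathered into $d\ell i_{m,j}$ via Lemma~\ref{lemmafs}.

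Having shown descent to $B_2(R_m)\otimes R_m^\times$ as a $\mathbb{Q}$-linear (indeed $R$-linear up to the $d\ell_0$ twist) map, the final step is to check it respects the defining relations of $B_2(R_m)$ — but since $\ell i_{m,j}\colon B_2(R_m)\to R$ is already well-defined by Proposition~\ref{prop-alter-li}, and the remaining pieces of $L_{m,r}$ are of the form $(\text{something}\circ\delta)\otimes(\text{something})$ with the first factor a genuine function on $B_2(R_m)$ by the same reasoning (the five-term relation is in the kernel of $\delta$ composed with any well-defined functional, and $\beta_m(j)\circ\delta$, $\alpha_j\circ\delta$ factor through $B_2$ because $\delta$ does), this is automatic. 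I expect the main obstacle to be purely organizational: tracking which monomials $u_{i_1}\cdots$ and $du_i$ appear at each $\star$-weight $j$ in $\alpha_j\circ\delta$ versus $d\ell i_{m,j}$, and confirming the coefficients $\frac{r-j}{j}$ produce an exact telescoping cancellation rather than merely a cancellation modulo $(t^m)$; the proof of Proposition~\ref{wrongmap} for the case $r=m+1$ is the model, and the general case differs only in that the single correction term there is replaced by the telescoping family indexed by $m\le j<r$.
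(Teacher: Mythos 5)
Your treatment of the descent claim is essentially sound: the first displayed formula for $\beta_{m}(j)$ involves only $\ell_a\wedge\ell_b$ with $a,b<m$ together with $\ell i_{m,j}$, and the outer factors $\ell_{r-j}$, $d\ell_{r-j}$, $d\ell_0$ have index $r-j<m$ or $0$, so every constituent of $L_{m,r}$ factors through reduction modulo $(t^m)$ (with $\ell i_{m,j}$ descending to $B_2(R_m)$ by Proposition \ref{prop-alter-li}). This is exactly the paper's one-line argument; your detour through a cancellation against $M_{m,r}$ is unnecessary for this part, though not wrong. (You also fold in vanishing on $\delta(\mathbb{Q}[R_m^{\flat}])$, which belongs to the following proposition, not to this lemma.)

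The $\star$-weight claim, however, has a genuine gap. You assert that the weight statement is ``immediate'' because ``$\ell_i$, $d\ell_i$ have $\star$-weight $i$'' and hence each summand of $L_{m,r}$ has weight $r$. This is false: $\ell_i$ has weight $i$ as a map to $R$, but $d\ell_i$ does \emph{not} have weight $i$ as a map to $\Omega^1_R$, since for $\lambda\in R^{\times}$ one has $d(\lambda^i\ell_i)=\lambda^i\,d\ell_i+i\lambda^{i-1}\ell_i\,d\lambda$ and $d\lambda\neq 0$ in $\Omega^1_R$. For the same reason $d\ell i_{m,j}$ is not of weight $j$, and consequently $\beta_m(j)$ is \emph{not} of weight $j$ (contrary to your attribution of this to Lemma \ref{alphalemma}, which concerns only $\alpha_j\circ\delta$). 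Individual summands such as $d\ell i_{m,j}\otimes\ell_{r-j}$ or $\ell i_{m,j}\otimes d\ell_{r-j}$ are therefore not of weight $r$; what is true is that the anomalous $d\lambda$ contributions cancel in the specific combinations occurring in $L_{m,r}$: in $(j-a)\,d\ell_a\wedge\ell_{j-a}+a\,d\ell_{j-a}\wedge\ell_a$ they cancel by antisymmetry of $\ell_a\wedge\ell_{j-a}$, and in $(r-j)\,d\ell i_{m,j}\otimes\ell_{r-j}-j\,\ell i_{m,j}\otimes d\ell_{r-j}$ they cancel because the two coefficients produce $(r-j)j-j(r-j)=0$. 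This pairwise cancellation, carried out after reducing to $L_{m,r}-M_{m,r}$ (with $M_{m,r}$ of weight $r$ by Proposition \ref{wrongmap}), is the actual content of the paper's proof and is entirely absent from your argument; without it the weight assertion is unproved.
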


\begin{proof}
Since all the terms in the definition of $L_{m,r}$ depend on the variables modulo $t^m, $ we obtain a map from $B_{2}(R_{m}) \otimes R_{m} ^{\times}$ to $\Omega^{1} _{R}.  $ 

Since we know that $M_{m,r}$ is of $\star$-weight $r,$ in order to prove that  $L_{m,r}$ is of $\star$-weight $r,$ it suffices to prove the same for $L_{m,r}-M_{m,r}.$ This difference is equal to the sum of 
\begin{eqnarray}\label{stareq1}
- \sum _{m\leq j <r } \big(\frac{r-j}{j}d \ell i_{m,j} \otimes \ell _{r-j}- \ell i _{m,j} \otimes d \ell _{r-j} \big)
\end{eqnarray}
and 
\begin{eqnarray}\label{stareq2}
\sum _{m\leq j <r }\frac{r-j}{j}\big(\sum _{1\leq a \leq j-m}\big ((j-a) d \ell _a \wedge \ell _{j-a}+a d \ell _{j-a} \wedge \ell _{a} \big) \big)\circ \delta\otimes \ell _{r-j}.
\end{eqnarray}

Let us first look at the term $(j-a) d \ell _a \wedge \ell _{j-a}+a d \ell _{j-a} \wedge \ell _{a}.$ For any $u \wedge v \in \Lambda ^{2} tR_{\infty}$ and $\lambda \in R^{\times},$ $((j-a) d \ell _a \wedge \ell _{j-a}+a d \ell _{j-a} \wedge \ell _{a})(\lambda \star (u\wedge v))=$
\begin{eqnarray*}
 & &\big( (j-a) d (\lambda ^a\ell _a) \wedge \lambda ^{j-a}\ell _{j-a}+a d (\lambda ^{j-a}\ell _{j-a}) \wedge \lambda ^a\ell _{a}\big)((u\wedge v))\\
 &=&  \big( \lambda ^{j}((j-a) d \ell _a \wedge \ell _{j-a}+a d \ell _{j-a} \wedge \ell _{a})+ (j-a)a  (\ell _a \wedge \ell _{j-a}+\ell _{j-a } \wedge \ell _a ) \lambda ^{j-1} d\lambda\big)(u\wedge v)\\
 &=& \lambda ^{j}((j-a) d \ell _a \wedge \ell _{j-a}+a d \ell _{j-a} \wedge \ell _{a})(u \wedge v).
\end{eqnarray*}
 Therefore the term (\ref{stareq2}) is of $\star$-weight $r.$ 
 
Similarly,   
 $(r-j)d \ell i_{m,j} \otimes \ell _{r-j}- j\ell i _{m,j} \otimes d \ell _{r-j} 
 $ evaluated on $\lambda \star (u \otimes v)$ is equal to 
\begin{eqnarray*}
& &(r-j)d (\lambda ^j\ell i_{m,j}) \otimes \lambda ^{r-j}\ell _{r-j}- j\lambda ^j\ell i _{m,j} \otimes d (\lambda ^{r-j}\ell _{r-j}) \\
&=& \lambda ^r \big((r-j)d \ell i_{m,j} \otimes \ell _{r-j}- j\ell i _{m,j} \otimes d \ell _{r-j} \big)+(r-j)j(\ell i _{m,j} \otimes \ell _{r-j}-\ell i _{m,j} \otimes \ell _{r-j}) \lambda ^{r-1}d \lambda \\
&=&\lambda ^r \big((r-j)d \ell i_{m,j} \otimes \ell _{r-j}- j\ell i _{m,j} \otimes d \ell _{r-j} \big)
\end{eqnarray*} 
evaluated on $u \otimes v.$ This implies that the term (\ref{stareq1}) is of $\star$-weight $r$ and finishes the proof of the lemma. 
\end{proof}

 \begin{proposition}
 The map $L_{m,r}: B_{2} (R_{m}) \otimes R_{m} ^{\times} \to \Omega^{1} _{R}$ vanishes on the boundaries of the elements in $\mathbb{Q}[R_{m}^{\flat}]$ and hence induces a map 
 $$
  (B_{2} (R_{m}) \otimes R_{m} ^{\times})/im(\delta) \to \Omega^{1} _{R}, $$
 which by restriction gives the regulator map ${\rm H}^{2} (R_{m},\mathbb{Q}(3)) \to\Omega^{1} _{R}$ of $\star$-weight $r$ we were looking for. We continue to denote these two induced maps by the same notation $L_{m,r}.$
 \end{proposition}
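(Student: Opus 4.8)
The plan is to verify that $L_{m,r}$ kills $\delta(\mathbb{Q}[R_m^\flat])$, where the differential on the left sends $[x]$ to $[x]\otimes x$; that is, I need to show $L_{m,r}([se^u]\otimes se^u)=0$ for every $se^u\in R_m^\flat$ (equivalently, for every $se^u$ with $u=u|_m$). Since Lemma \ref{lemma-descent-to-m} already tells us $L_{m,r}$ descends to $B_2(R_m)\otimes R_m^\times$, and since Proposition \ref{wrongmap} gives the vanishing of $M_{m,r}$ on such boundaries, the natural strategy is to compute the correction term $L_{m,r}-M_{m,r}$ on $[se^u]\otimes se^u$ and check it vanishes when $u=u|_m$. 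The difference was already decomposed in the proof of Lemma \ref{lemma-descent-to-m} as the sum of \eqref{stareq1} and \eqref{stareq2}, so the task reduces to evaluating those two explicit sums on $\delta(se^u)\otimes se^u = \delta([se^u])\otimes se^u$ and showing the total is zero.

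First I would evaluate \eqref{stareq2} on $[se^u]\otimes se^u$. Here the key input is that $d\ell_a(se^u)=du_a$ and $\ell_a(se^u)=u_a$ when $u=u|_m$ and $a<m$, while $\ell_{r-j}(se^u)=u_{r-j}$. The terms $(j-a)d\ell_a\wedge\ell_{j-a}+a\,d\ell_{j-a}\wedge\ell_a$ evaluated on $\delta(se^u)$ are then totally explicit, and one should observe the antisymmetrization structure: pairing $d\ell_a\wedge\ell_{j-a}$ against the $2$-vector $\delta(se^u)=(1-se^u)\wedge se^u$ picks up the coefficient $f_{j-a}u_a$ of $se^u$ in a symmetric way; after summing over $a$ the wedge $d\ell_a\wedge\ell_{j-a}$ contributes an expression involving $d f_{j-a}\,u_a - f_{j-a}\,du_a$. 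Next I would evaluate \eqref{stareq1} on $[se^u]\otimes se^u$: the term $d\ell i_{m,j}\otimes\ell_{r-j}$ gives $d(\ell i_{m,j}(se^u))\cdot u_{r-j}$ and $\ell i_{m,j}\otimes d\ell_{r-j}$ gives $\ell i_{m,j}(se^u)\,du_{r-j}$. Using the formula $\ell i_{m,j}(se^u)=t_{j-1}(f\,u_t|_{j-m})$ from \eqref{adddilogformula}, together with Lemma \ref{lemmafs} which identifies $j\,t_j(f)$ with $s\,t_{j-1}(f_su_t)$ when $u=u|_m$, I expect the $d$ of $\ell i_{m,j}$ to break (via the product rule applied to $f=f(s,u|_m)$ and to $u_t$) into a piece matching the $df_{j-a}u_a$ contributions from \eqref{stareq2} and a piece absorbed by the $\ell i_{m,j}\otimes d\ell_{r-j}$ term; the combinatorial coefficients $\frac{r-j}{j}$ and the reindexing $b=r-j$, $a$ ranging over $1\le a\le j-m$, are exactly engineered so that everything cancels telescopically. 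A clean way to organize this is to substitute $j\mapsto a+b$ and track the coefficient of each monomial $u_\alpha\,du_\beta$ (and each $df_\gamma u_\delta$) and show it is zero.

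The main obstacle I anticipate is bookkeeping: matching the correction coming from $\beta_m(j)$ (which itself contains $d\ell i_{m,j}$ plus a double sum of wedge terms) against the $\ell i_{m,j}\otimes d\ell_{r-j}$ summands, while correctly using the conventions $\ell i_{m,m}:=0$ and the special role of the $j=m$ term noted after \eqref{eqn lmr}. A possible shortcut is to argue more structurally: since $L_{m,r}$ is of $\star$-weight $r$ and already descends to $B_2(R_m)\otimes R_m^\times$, and since $L_{m,r}-M_{m,r}$ is built entirely out of $\ell i_{m,j}$'s (which are \emph{themselves} of the form $\ell_{m,j}\circ\delta$, hence already vanish on boundaries $\delta([se^u])$ in the $B_2$ slot — but here the boundary is the weight-$3$ differential $[x]\mapsto[x]\otimes x$, not the $B_2$ differential, so this needs care), one could try to reduce the identity to the already-established vanishing of $M_{m,r}$ plus the fact that the pure-$\ell i$ terms in \eqref{stareq1} combine into $d(\text{something}\otimes\text{something})$ that is exact in a way that kills the specific input $[se^u]\otimes se^u$. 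Once the vanishing on $\delta(\mathbb{Q}[R_m^\flat])$ is established, the factoring through $(B_2(R_m)\otimes R_m^\times)/\mathrm{im}(\delta)$ is immediate, and restricting to the kernel of the next differential $\delta:B_2(R_m)\otimes R_m^\times\to\Lambda^3 R_m^\times$ yields the asserted map ${\rm H}^2(R_m,\mathbb{Q}(3))\to\Omega^1_R$ of $\star$-weight $r$; the $\star$-weight claim is inherited directly from Lemma \ref{lemma-descent-to-m}.
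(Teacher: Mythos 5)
Your overall strategy coincides with the paper's: both arguments reduce, via Proposition \ref{wrongmap} (vanishing of $M_{m,r}$ on $[se^u]\otimes se^u$ with $u=u|_m$) and Lemma \ref{lemma-descent-to-m} (descent of $L_{m,r}$ to modulus $m$), to showing $L_{m,r}([se^u]\otimes se^u)=M_{m,r}([se^u]\otimes se^u)$ when $u=u|_m$. The problem is that you stop exactly where the content of the proof begins: the cancellation of $(L_{m,r}-M_{m,r})([se^u]\otimes se^u)$ is only anticipated ("the combinatorial coefficients \dots\ are exactly engineered so that everything cancels telescopically"), not verified, and your proposed "structural" shortcut is, as you yourself flag, not sound as stated, since the relevant boundary is $[x]\mapsto[x]\otimes x$ in the weight-$3$ complex rather than the $B_2$ differential, so the fact that $\ell i_{m,j}=\ell_{m,j}\circ\delta$ does not by itself kill anything here.

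To close the gap, the paper expands both $L_{m,r}$ and $M_{m,r}$ as compositions of $\delta\otimes id$ with explicit sums over triples $(a,b,c)$, $a+b+c=r$, of monomials $d\ell_a\wedge\ell_b\otimes\ell_c$, $\ell_a\wedge d\ell_b\otimes\ell_c$, $\ell_a\wedge\ell_b\otimes d\ell_c$, and compares coefficients. The essential point — which your write-up does not identify — is that these coefficients do \emph{not} agree termwise (for instance the coefficient of $\ell_a\wedge\ell_b\otimes d\ell_c$ is $b$ in $L_{m,r}$ and $0$ in $M_{m,r}$); the comparison only succeeds because on the special element $\delta(se^u)\otimes se^u$ the tensor factor equals the second entry of $\delta(se^u)$, so $\ell_a\wedge d\ell_b\otimes\ell_c$ and $\ell_a\wedge\ell_c\otimes d\ell_b$ take the same value when $u=u|_m$, and the mismatched coefficients then reconcile via the identity $\tfrac{-cb}{a+b}+c=\tfrac{ca}{a+b}$. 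If instead you carry out your direct evaluation of \eqref{stareq1} and \eqref{stareq2} in terms of the $u_i$ and $f_i$, this symmetry is built in automatically, but that computation must actually be performed; as written, your proposal is a correct plan for the paper's own argument rather than a proof.
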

 
 \begin{proof}
 We know that $M_{m,r}$ vanishes on the boundary $\delta(se^u)$  of elements $se^u \in \mathbb{Q}[R_\infty],$ with $u=u|_{m},$ by Proposition  \ref{wrongmap}. We also know  by the previous lemma that $L_{m,r}$ descends to a map on $B_{2}(R_{m} ) \otimes R_{m} ^{\times}.$ Therefore, in order to prove the statement, we only need to prove that 
 $$
L_{m,r}(\delta(se^u))= M_{m,r}(\delta(se^u)),
 $$ 
 for $u=u|_{m}. $ We first rewrite $L_{m,r}$ as the composition of $\delta \otimes id$ with 
\begin{eqnarray*}
& &\sum _{a+b=r \atop {m \leq a, \, 1 \leq  b}} b\cdot  \ell _a \wedge \ell_b \otimes d\ell _0-\sum _{a+b+c=r \atop {m \leq a+b, \atop {1 \leq a, \, b, \, c<m}}}\frac{c}{a+b} b \cdot d\ell _a \wedge \ell _b\otimes \ell _c\\
& &-\sum _{a+b+c=r \atop {m \leq a, \, 1 \leq b, \, c}}\frac{c}{a+b} b \cdot d(\ell _a \wedge \ell _b)\otimes \ell _c+\sum _{a+b+c=r \atop {m \leq a, \, 1 \leq b, \, c}}b \cdot \ell _a \wedge \ell _b \otimes d \ell _c.
\end{eqnarray*}
On the other hand, recall that  $M_{m,r}$ is the composition of $\delta \otimes id$ with 
\begin{eqnarray*}
& &\sum _{a+b=r \atop {m \leq a, \, 1 \leq  b}} b\cdot  \ell _a \wedge \ell_b \otimes d\ell _0-\sum _{a+b+c=r \atop {m \leq a+b, \atop {1 \leq a, \, b, \, c}}}\frac{c}{a+b} b \cdot d\ell _a \wedge \ell _b\otimes \ell _c.
\end{eqnarray*}
If we compare the two expressions we see that all of the terms match above except possibly the ones that correspond to the triples $(a,b,c)$ with $1 \leq a,\, b, \, c,$ $a+b+c=r,$ and $m \leq a$ or $m \leq b.$ By anti-symmetry, we may assume without loss of generality that $m \leq a.$ We need to compare the coefficients of the terms $d \ell _a \wedge \ell _b \otimes \ell _c,$ $ \ell _a \wedge d \ell _b \otimes \ell _c,$ and $ \ell _a \wedge  \ell _b \otimes d \ell _c,$ subject to the above constraints, in $L_{m,r}$ and $M_{m,r}.$  
 
The coefficient of $d \ell _a \wedge \ell _b \otimes \ell _c$ in $L_{m,r}$ and $M_{m,r}$  are both equal to $-\frac{cb}{a+b}.$ The coefficient of $ \ell _a \wedge d\ell _b \otimes \ell _c$ in $L_{m,r}$
is $\frac{-cb}{a+b}$ and in $M_{m,r},$ it is $\frac{ca}{a+b}.$ Finally, the coefficient of $ \ell _a \wedge  \ell _b \otimes d \ell _c$ in $L_{m,r}$ is $b,$ whereas in $M_{m,r}$ it is 0.  

We finally note that the values of  $\ell _a \wedge d\ell _b \otimes \ell _c$ and $ \ell _a \wedge \ell _c \otimes d\ell _b$ on $\delta(se^u)\otimes se^u$ are the same when $u=u|_m.$ Then the equality $\frac{-cb}{a+b}+c=\frac{ca}{a+b}$ finishes the proof.
\end{proof}
The following is a direct consequence of Lemma  \ref{lemma-descent-to-m} which we record for later reference. 
\begin{corollary}\label{comultiplication}
We have a commutative diagram 
$$
\xymatrix{
B_{2}(R_r) \otimes R_r ^{\times} \ar^{\delta\otimes id}[r] \ar@{->>}[d] & \Lambda ^2 R_r ^{\times} \otimes R_{r} ^{\times} \ar[d]\\
B_{2}(R_m) \otimes R_m ^{\times}  \ar^{\;\;\;\;\;\;\;L_{m,r}}[r]& \Omega^1 _{R}.
 }
$$
\end{corollary}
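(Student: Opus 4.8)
The statement to prove is Corollary~\ref{comultiplication}, which asserts the commutativity of a square relating $L_{m,r}$ on $B_2(R_m)\otimes R_m^\times$ with the composite $\delta\otimes\mathrm{id}$ followed by a natural map $\Lambda^2 R_r^\times\otimes R_r^\times\to\Omega^1_R$, precomposed with the projection $B_2(R_r)\otimes R_r^\times\twoheadrightarrow B_2(R_m)\otimes R_m^\times$.

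\textbf{Approach.} The plan is to observe that the right-hand vertical arrow $\Lambda^2 R_r^\times\otimes R_r^\times\to\Omega^1_R$ is implicitly the map that makes the diagram commute by construction: indeed $L_{m,r}$ was \emph{defined} in~(\ref{eqn lmr}) as a composite $(\delta\otimes\mathrm{id})$ followed by an explicit expression in the $\ell_a$, $d\ell_a$, $d\ell_0$, as written out in the proof of the preceding proposition. So the content of the corollary is simply that $L_{m,r}$, viewed via its formula on $B_2(R_r)\otimes R_r^\times$ before descent, factors through the projection to $B_2(R_m)\otimes R_m^\times$ and there agrees with $L_{m,r}$. First I would spell out what the unlabeled vertical map on the right is: it is the map sending $(\ell_{a_1}\wedge \ell_{a_2})\otimes \ell_{a_3}$ (the components of the composition $\Lambda^2\log^\circ\otimes \log^\circ$ together with the $d\ell_0$ piece) to the corresponding terms of the formula for $L_{m,r}$; equivalently it is the map $\Lambda^2 R_r^\times\otimes R_r^\times\to\Omega^1_R$ whose composite with $\delta\otimes\mathrm{id}$ on $B_2(R_r)\otimes R_r^\times$ is exactly the formula of~(\ref{eqn lmr}) evaluated on $R_r$ rather than on $R_m$.

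\textbf{Key steps.} (1) Identify the right vertical arrow explicitly as the linear map $\Phi\colon \Lambda^2 R_r^\times\otimes R_r^\times\to \Omega^1_R$ given on $\log^\circ$-coordinates by the same expression that defines $L_{m,r}$; this is well-defined on $R_r$ since all the maps $\ell_a$, $d\ell_a$ for $0\le a<r$ and $\ell i_{m,j}$ for $j<r$ make sense on $R_r$. (2) Then the composite $B_2(R_r)\otimes R_r^\times \xrightarrow{\delta\otimes\mathrm{id}} \Lambda^2 R_r^\times\otimes R_r^\times \xrightarrow{\Phi} \Omega^1_R$ equals the ``formula'' version of $L_{m,r}$ on $B_2(R_r)\otimes R_r^\times$. (3) Invoke Lemma~\ref{lemma-descent-to-m}: every term appearing in $L_{m,r}$ depends only on the variables modulo $t^m$ (the functions $\ell_a$, $\ell i_{m,j}$ involved only see $u|_m$, and $d\ell_0$ sees the constant term), so this composite factors through the projection $B_2(R_r)\otimes R_r^\times\twoheadrightarrow B_2(R_m)\otimes R_m^\times$ and the induced map is exactly $L_{m,r}$ as constructed in the proposition. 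Chasing the square, the two ways around agree. This is essentially a bookkeeping argument: reading off from the explicit formula that the relevant composition on $R_r$ descends.

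\textbf{Main obstacle.} The only real subtlety is pinning down what the unlabeled vertical map $\Lambda^2 R_r^\times\otimes R_r^\times\to\Omega^1_R$ on the right is meant to be, since the corollary leaves it implicit; the natural reading is that it is precisely $\Phi$ above, in which case the corollary is a tautological rephrasing of the construction together with Lemma~\ref{lemma-descent-to-m}. I would therefore make that identification explicit in the first sentence of the proof, and then the commutativity is immediate: it just records that $L_{m,r}$, although defined on $R_m$, is computed by a universal formula on $R_r$ that factors through the reduction. No genuinely new computation is required beyond what Lemma~\ref{lemma-descent-to-m} already gives.
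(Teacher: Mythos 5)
Your reading is exactly the paper's: the unlabeled right-hand vertical arrow is the explicit formula defining $L_{m,r}$ (the expression in the $\ell_a$, $d\ell_a$, $d\ell_0$ written out in (\ref{eqn lmr})), evaluated on $\Lambda^2 R_r^{\times}\otimes R_r^{\times}$, and commutativity is precisely the observation of Lemma \ref{lemma-descent-to-m} that every term in that formula depends only on the variables modulo $t^m$. The paper itself records the corollary as "a direct consequence of Lemma \ref{lemma-descent-to-m}", so your proposal is correct and follows the same route.
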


We expect  that the above maps combine to give an isomorphism between the infinitesimal part of the  cohomology of $R_{m}$ and the direct sum of the module of K\"{a}hler differentials, justifying the name of the regulator, c.f. \cite[Conjecture 1.15]{config}. However, at this point, we can only prove the surjectivity:  

\begin{proposition}\label{middle coh}
Suppose that $R$ is a regular local $\mathbb{Q}$-algebra and $2 \leq m$ as above. The direct sum of the $L_{m,r}$ induce a surjection: 
$$
\oplus_{m< r< 2m}L_{m,r}: {\rm H}^{2}(R_{m},\mathbb{Q}(3))^{\circ} \twoheadrightarrow \oplus _{m< r< 2m} \Omega^1 _{R}. 
$$
\end{proposition}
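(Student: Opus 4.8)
The plan is to prove surjectivity by explicitly computing the $\star$-weight $r$ component of $L_{m,r}$ on a well-chosen family of test elements and checking that these already span $\Omega^1_R$. The natural source of test elements is the following: for $a \in R^{\times}$, $1 \le n < m$, and $b \in R^{\times}$, consider the element $[a e^{ct^n}] \otimes b \in B_2(R_m) \otimes R_m^{\times}$ for suitable $c \in R$ with $a(1-a) \in R^{\times}$ (so that $ae^{ct^n} \in R_m^{\flat}$), together with elements of the form $[ae^{ct^n}] \otimes (1 + dt^k)$. Because $L_{m,r}$ kills the image of $\delta$ (by the preceding proposition), it descends to ${\rm H}^2(R_m,\mathbb{Q}(3))^{\circ}$, so it suffices to exhibit, for each fixed $r$ with $m < r < 2m$, an element whose image under $L_{m,r}$ is an arbitrary prescribed $\omega \in \Omega^1_R$ while its image under $L_{m,r'}$ for $r' \neq r$ is irrelevant — actually we need the full direct sum to be surjective, so the cleanest route is to show that the $r$-th summand $L_{m,r}$ alone is surjective onto its $\Omega^1_R$ factor and then assemble.

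First I would isolate, for a single index $r$, the simplest contributing term. Looking at the formula (\ref{eqn lmr}), the summand corresponding to $j = m$ is $\frac{r-m}{m}\,\alpha_m \otimes \ell_{r-m}$, and by Lemma \ref{alphalemma}, $\alpha_m(\delta(se^u)) = t_{m-1}(f_s u_t)\,ds$. Thus on an element $[se^u] \otimes v$ with $u = u|_m$ and $v \in R_m^{\times}$, the $\alpha_m$-term contributes something proportional to $t_{m-1}(f_s u_t)\,\ell_{r-m}(v)\,ds$. I would choose $u = u_{m-1}t^{m-1}$ (a single top-degree term), so that $u_t = (m-1)u_{m-1}t^{m-2}$ and $f = \log^{\circ}(1 - se^{u_{m-1}t^{m-1}})$ has $f_s$ with constant term $\frac{-1}{1-s}$; then $t_{m-1}(f_s u_t)$ is $(m-1)u_{m-1}\cdot\frac{-1}{1-s}$ plus higher-order corrections that vanish because $u$ is concentrated in degree $m-1 > m-2$. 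Next, choose $v = 1 + d\,t^{r-m}$ with $d \in R$, so $\ell_{r-m}(v) = d$. One then computes $L_{m,r}([se^u]\otimes v)$ and extracts the coefficient of $ds$; the other terms in $L_{m,r}$ ($\ell i_{m,r}\otimes d\ell_0$, the $\beta_m(j)$ for $j > m$, and the $\ell i_{m,j}\otimes d\ell_{r-j}$) either vanish on this choice or contribute terms proportional to $ds$ as well, all with explicit nonzero rational coefficients. Varying $u_{m-1}$, $s$, and $d$ over $R$, and using that $\Omega^1_R$ is generated over $R$ by $da$ for $a \in R^{\times}$ (since $R$ is a $\mathbb{Q}$-algebra, $d\log a = da/a$ and these span), one shows the image of $L_{m,r}$ contains $\frac{\lambda}{1-s}\,ds$ for all such data, hence all of $\Omega^1_R$. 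More robustly, I would instead take $v \in R^{\times}$ so that $d\ell_0(v) = d\log v$ and use the leading term $\ell i_{m,r}(se^u)\,d\log v$: by Theorem \ref{theorem-b2} the map $\ell i_{m,r}$ is already surjective onto $R$ on $B_2(R_m)^{\circ}$, so choosing $[se^u] \in B_2(R_m)$ with $\ell i_{m,r}(se^u)$ an arbitrary prescribed unit of $R$ and $v$ arbitrary in $R^{\times}$ gives $L_{m,r}([se^u]\otimes v) = (\text{prescribed element})\cdot d\log v + (\text{correction terms})$, and the correction terms involve $\ell i_{m,j}$ and $\alpha_j\circ\delta$ for $j < r$, which by a weight/degree bookkeeping argument can be arranged to vanish by taking $u$ supported only in degree $\ge r - m$ (so $\ell_i(se^u) = 0$ for $i < r-m$ forces the lower $\alpha_j\circ\delta$ and $\ell i_{m,j}$ contributions to drop out).

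The main obstacle, and where I would spend the most care, is the bookkeeping needed to guarantee that the ``correction terms'' genuinely vanish or at least do not conspire to cancel the leading contribution — i.e. decoupling the $r$-th summand from the others. The cleanest way around this is to exploit the $\star$-weight grading established in Lemma \ref{lemma-descent-to-m}: each $L_{m,r}$ is homogeneous of $\star$-weight $r$, and the element $[se^{u_{m-1}t^{m-1}}] \otimes (1 + d\,t^{r-m})$ is a sum of terms of various $\star$-weights, but only its $\star$-weight-$r$ component can survive under $L_{m,r}$. Since distinct $r \in (m, 2m)$ give distinct weights, a single carefully graded test element can be used to hit each factor independently, and the direct sum $\oplus_r L_{m,r}$ is then forced to be surjective because each graded piece is surjective separately. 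I would organize the final write-up as: (1) reduce to $\star$-homogeneous test elements via the weight decomposition; (2) compute $L_{m,r}$ on $[se^{u_{m-1}t^{m-1}}]\otimes v$ using Lemma \ref{alphalemma} and Lemma \ref{lemmafs}, getting an explicit multiple of $\ell i_{m,r}(se^{u_{m-1}t^{m-1}})\,d\log v + (\text{lower-weight junk that is killed});$ (3) invoke the surjectivity of $\ell i_{m,r}$ from Theorem \ref{theorem-b2} together with surjectivity of $R^{\times} \to \Omega^1_R$, $v \mapsto d\log v$, to conclude.
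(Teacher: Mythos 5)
There is a genuine gap, and it sits right at the start: your test elements do not represent classes in ${\rm H}^{2}(R_{m},\mathbb{Q}(3))^{\circ}$. This cohomology group is $\ker\big(\delta: B_{2}(R_{m})\otimes R_{m}^{\times}\to \Lambda^{3}R_{m}^{\times}\big)/\mathrm{im}(\delta)$, so a class must be represented by a cocycle. The fact that $L_{m,r}$ kills $\mathrm{im}(\delta)$ only gives a map on the quotient $(B_{2}(R_{m})\otimes R_{m}^{\times})/\mathrm{im}(\delta)$, of which ${\rm H}^{2}$ is a proper subquotient; surjectivity of the former onto $\Omega^{1}_{R}$ says nothing about surjectivity of its restriction to $\ker\delta/\mathrm{im}(\delta)$. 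Your elements $[se^{u}]\otimes v$ (a single symbol tensored with a unit) have $\delta([se^{u}]\otimes v)=(1-se^{u})\wedge se^{u}\wedge v$, which is not zero in general, so none of the computations you propose takes place inside ${\rm H}^{2}$. This is not a bookkeeping issue that support conditions on $u$ can repair.

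The repair is exactly the stronger use of Theorem \ref{theorem-b2} that the paper makes: that theorem identifies the $\star$-weight-$r$ part of $\ker(\delta^{\circ})\subseteq B_{2}(R_{m})^{\circ}$ with $R$ via $\ell i_{m,r}$, so one can choose $\alpha$ \emph{in the kernel of} $\delta^{\circ}$, of pure weight $r$, with $\ell i_{m,r}(\alpha)$ an arbitrary element of $R$. For $b\in R^{\times}$ the element $\alpha\otimes b$ is then automatically a cocycle ($\delta(\alpha\otimes b)=\delta(\alpha)\wedge b=0$), and $L_{m,r}(\alpha\otimes b)=\ell i_{m,r}(\alpha)\,d\log b$ with \emph{no} correction terms, since $\ell_{i}(b)=0=d\ell_{i}(b)$ for $i>0$ already kills every summand of (\ref{eqn lmr}) except $\ell i_{m,r}\otimes d\ell_{0}$ --- the vanishing you try to force by taking $u$ supported in degrees $\geq r-m$ is free once $b$ is a unit of $R$. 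Since $R$ is local, $R\,d\log(R^{\times})=\Omega^{1}_{R}$, and the $\star$-weight grading (which you correctly invoke) decouples the different $r$'s, giving surjectivity of the direct sum. Your sketch has all these ingredients in view but assembles them on the wrong domain; incidentally, the first explicit computation is also off (the constant term of $f_{s}=\partial_{s}\log^{\circ}(1-se^{u})$ is $0$, not $-1/(1-s)$, because of the normalization $\log^{\circ}$), though that variant is superseded anyway.
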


\begin{proof}
Suppose that  $\alpha \in B_{2}(R_{m})^{\circ} $ is in the part of kernel of the $\delta^{\circ}$ which is of $\star$-weight $r.$ By Theorem \ref{theorem-b2}, this part is isomorphic to $R$ via the restriction of the  map $\ell i_{m,r}.$ Computing the value of $L_{m,r}$ on $\alpha \otimes b,$ for $b \in R^{\times},$ we see that $L_{m,r}(\alpha\otimes b)=\ell i_{m,r}(\alpha)\frac{db}{b}.$
Since $\alpha \otimes b $ is in the kernel of $\delta,$ we see that the image of $L_{m,r}$ above is the additive group generated by the set $Rd\log (R^{\times}).$ Since $R$ is local this is equal to  $\Omega^{1} _{R}.$ This implies  the surjectivity. 
\end{proof}

\section{Construction of the maps from $\Lambda ^3 (R_{2m-1},(t^m))^{\times}$ to  
$\Omega^{1} _{R/k}$ }

For a ring $A$ and ideal $I,$ let $(A,I)^{\times}:=\{(a,b)| a, \, b \in A ^{\times},\; a-b \in I \},$ and let $\pi_i:(A,I)^{\times} \to A^{\times},$ for $i=1,2$  denote the two projections.  In this section we will define a map 
$\omega_{m,r}:\Lambda ^{3}(R_{r},(t^m))^{\times}\to \Omega^{1} _{R/k},$ which will fundamentally depend on the assumption that $R/k$ is a 
smooth,  $k$-algebra of relative dimension 1.

\subsection{Description of the construction} Assume that $R/k$ is a smooth,  $k$-algebra of relative dimension 1. Using the map $L_{m,r},$ we will construct a map $\omega_{m,r}:\Lambda ^{3}(R_{r},(t^m))^{\times}\to \Omega^{1} _{R/k}.$   Let us denote the composition of $L_{m,r}$ with the canonical projection $\Omega^1 _{R} \to \Omega^1 _{R/k}$ by $\underline{L}_{m,r}.$  Let  $s:\Lambda ^{3}(R_{r},(t^m))^{\times} \to \Lambda ^{3}R_{r} ^{\times},$ be the map given by $s:=\Lambda ^3\pi_{1} -\Lambda ^{3}\pi_2.$    This map factors through  $I_{m,r}:=im \big((1+(t^m)) \otimes \Lambda ^{2}R_{r} ^{\times} \big) \subseteq (\Lambda ^{3}R_{r} ^{\times})^{\circ}.$ We will define a map 
$$
\Omega_{m,r}: I_{m,r} \to \Omega^1_{R/k},
$$
and then let 
$$
\omega_{m,r}:=\Omega_{m,r}\circ s.
$$

Given $\alpha \in I_{m,r},$ there exists $\varepsilon \in im((1+(t^m))\otimes k^{\times} \otimes R_{r}^{\times}) \subseteq (\Lambda ^{3} R_{r} ^{\times})^{\circ},$ such that 
$\alpha -\varepsilon =\delta_r (\gamma) $ for some $\gamma \in (B_{2}(R_{r})\otimes R_{r} ^{\times})^{\circ}.$ We then define $\Omega_{m,r}(\alpha):=\underline{L}_{m,r}(\gamma|_{t^m}) \in \Omega^{1} _{R/k}.$ The map is well-defined only after we pass to the quotient consisting of relative differentials. 

\subsection{Proof that $\Omega_{m,r}$ is well-defined}

In this section, we will justify the construction we made in the above paragraph in several steps. We assume that $R$ is a smooth local $k$-algebra of relative dimension 1. 

\begin{lemma}\label{redtok} 
  For any  $\alpha \in  (\Lambda^{3}R_{r}^{\times})^{\circ},$ there exists $\varepsilon \in   im(\Lambda^{2}R_{r}^{\times}\otimes k^{\times})\subseteq (\Lambda^{3}R_{r}^{\times})^{\circ}$ such that $\alpha -\varepsilon$ lies in the image of   $(B_{2}(R_{r}) \otimes R_{r} ^{\times})^{\circ}$ in  $ (\Lambda^{3}R_{r}^{\times})^{\circ}.$ Moreover, if 
  $$\alpha \in  im((1+t^mR_{r})^{\times } \otimes \Lambda^{2}R_{r}^{\times}) \subseteq (\Lambda^{3}R_{r}^{\times})^{\circ}$$ 
  then we can choose 
  $$\varepsilon \in   im( (1+t^mR_{r})^{\times } \otimes R_{r}^{\times}\otimes k^{\times})\subseteq (\Lambda^{3}R_{r}^{\times})^{\circ}$$ 
  such that $\alpha -\varepsilon$ lies in the image of   $(B_{2}(R_{r}) \otimes R_{r} ^{\times})^{\circ}$ in  $ (\Lambda^{3}R_{r}^{\times})^{\circ}.$
\end{lemma}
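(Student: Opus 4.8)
The plan is to reduce everything to a statement about Milnor $K$-theory, where Proposition \ref{milnor kahler} and Corollary \ref{cor milnor} give complete control of the infinitesimal part. First I would recall that the weight-$3$ complex (\ref{weight-3-seq}) maps to $\Lambda^3 R_r^\times$ via $\delta$, so the content of the first assertion is that the cokernel of $\delta: (B_2(R_r)\otimes R_r^\times)^\circ \to (\Lambda^3 R_r^\times)^\circ$ is killed by elements coming from $\Lambda^2 R_r^\times \otimes k^\times$. Now $(\Lambda^3 R_r^\times)^\circ$ surjects onto $K_3^M(R_r)^\circ$, and by Merkurjev--Suslin / the Bloch complex the image of $\delta$ in $\Lambda^3 R_r^\times$ together with the symbols $\{a,1-a\}$-type elements generates the kernel of $\Lambda^3 R_r^\times \to K_3^M(R_r)$; since those Steinberg relations themselves lie in the image of $\mathbb{Q}[R_r^\flat] \to B_2(R_r)\otimes R_r^\times$ followed by $\delta$, it suffices to kill the image of $\alpha$ in $K_3^M(R_r)^\circ$ by an element of the asserted type. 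By Corollary \ref{cor milnor}, $K_3^M(R_r)^\circ \simeq \bigoplus_{1\le i<r}\Omega^2_R$, and the class of $\alpha$ is detected by the residues $\lambda_i(\alpha)=\mathrm{res}_{t=0}\,t^{-i}\,d\log(\cdot)^{\wedge 3}$. Since $R/k$ is smooth of relative dimension $1$, $\Omega^2_R$ is generated by $d\log(\lambda)\wedge d\log(b)$ with $\lambda\in k^\times$, $b\in R^\times$ — this is where smoothness of relative dimension $1$ enters decisively, via the exact sequence $0\to \Omega^1_k\otimes R \to \Omega^1_R \to \Omega^1_{R/k}\to 0$ with $\Omega^1_{R/k}$ free of rank $1$ — so every class in $K_3^M(R_r)^\circ$ is represented by a sum of symbols $\{1+at^i,\lambda,b\}$ with $\lambda\in k^\times$. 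Such symbols lie in $\mathrm{im}(\Lambda^2 R_r^\times\otimes k^\times)$ after reordering, giving the desired $\varepsilon$.

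For the refined ("moreover") statement, the extra constraint is that when $\alpha$ lies in $\mathrm{im}((1+t^m R_r)^\times\otimes\Lambda^2 R_r^\times)$, the correcting term $\varepsilon$ should have its first slot in $(1+t^m R_r)^\times$ as well. The key point is that such $\alpha$ has vanishing residues $\lambda_i(\alpha)$ for $i\le m$ — the factor from $1+t^m R_r$ forces $d\log$ to be divisible by $t^{m}\,dt$ modulo lower-order noise — so the class of $\alpha$ in $K_3^M(R_r)^\circ$ lives in the summand $\bigoplus_{m\le i<r}\Omega^2_R$. Running the same argument as above but now only needing symbols $\{1+at^i,\lambda,b\}$ with $i\ge m$ produces $\varepsilon\in\mathrm{im}((1+t^m R_r)^\times\otimes R_r^\times\otimes k^\times)$, and then $\alpha-\varepsilon$ maps to $0$ in $K_3^M(R_r)^\circ$, hence lies in the image of $\delta$ from $(B_2(R_r)\otimes R_r^\times)^\circ$ by the first paragraph's reduction. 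One must check that subtracting $\varepsilon$ does not leave the subgroup $\mathrm{im}((1+t^m R_r)^\times\otimes\Lambda^2 R_r^\times)$ in a way that breaks the Bloch-complex reduction; since both $\alpha$ and $\varepsilon$ lie in $(1+t^m R_r)^\times\wedge\Lambda^2 R_r^\times$ and the Steinberg-type relations needed to land in $\mathrm{im}(\delta)$ can all be chosen with the $(1+t^m R_r)$-factor preserved, this is fine.

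The main obstacle I expect is the bookkeeping in the refined statement: precisely pinning down that an element of $\mathrm{im}((1+t^m R_r)^\times\otimes\Lambda^2 R_r^\times)$ has Milnor class supported in degrees $\ge m$, and that one can choose the Bloch-group lift $\gamma$ (witnessing $\alpha-\varepsilon\in\mathrm{im}(\delta_r)$) so that $\gamma$ itself is "concentrated above $t^m$" in the sense needed for the later definition of $\Omega_{m,r}$ to be unambiguous. Concretely, one wants the Steinberg relations $[x]-[y]+[y/x]-[(1-x^{-1})/(1-y^{-1})]+[(1-x)/(1-y)]$ used to correct $\alpha-\varepsilon$ into $\delta_r(\gamma)$ to involve only arguments congruent to fixed units modulo $(t^m)$; this is a direct but slightly delicate manipulation using that $1+t^m R_r$ is a group under multiplication and that $\log$ identifies it with $t^m R_{r-m}$. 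Everything else — the $\star$-weight decomposition, the residue computations, and the reduction to symbols over $k^\times$ — is routine given the earlier sections.
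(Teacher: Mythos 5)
Your proposal is correct and follows essentially the same route as the paper: identify the cokernel of $\delta$ on the infinitesimal part with $K_{3}^{M}(R_{r})^{\circ}\simeq\oplus_{1\le i<r}\Omega^{2}_{R}$ via the residue maps of Corollary \ref{cor milnor}, use smoothness of relative dimension $1$ to generate $\Omega^{2}_{R}$ by elements $a\,d\log(b)\wedge d\log(\lambda)$ with $\lambda\in k^{\times}$, and hit these classes with symbols of the form $e^{at^{i}}\wedge b\wedge\lambda$ (with $i\ge m$ for the refined statement). The only slips are cosmetic: the residues $\lambda_{i}(\alpha)$ vanish for $i<m$ (not $i\le m$), consistent with your subsequent claim that the class lands in $\oplus_{m\le i<r}\Omega^{2}_{R}$, and your closing worry about choosing the Bloch-group lift $\gamma$ ``concentrated above $t^{m}$'' is unnecessary, since the lemma asks only that $\alpha-\varepsilon$ lie in the image of $(B_{2}(R_{r})\otimes R_{r}^{\times})^{\circ}$ with no constraint on the witness.
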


\begin{proof}

The infinitesimal part of the cokernel of the map 
$$
 B_{2}(R_{r})\otimes R_{r}^{\times} \to \Lambda^{3}R_{r}^{\times}
$$
is  $K_{3}^{M}(R_{r})^{\circ}$ which is isomorphic to   $\oplus _{1 \leq i <r}t^i\Omega^{2} _{R} $ via the map from $ \Lambda^{3}R_{r}^{\times},$ whose $i$-th coordinate  is given by 
\begin{eqnarray}\label{cokernel}
res_{t=0} \frac{1}{t^i} d \log(y_1)\wedge d \log(y_2)\wedge d \log(y_3),
\end{eqnarray}
by Corollary \ref{cor milnor}. 
   Further, by the assumption on smoothness of dimension 1, we conclude that the natural map 
$$
\Omega^{1} _{R }\otimes _k\Omega^{1} _{k} \to \Omega^{2} _{R }
$$
is surjective. Since we assume  that $R$ is local, the map
$$
R\otimes _{\mathbb{Z}}R^{\times} \to \Omega^{1} _{R}, 
$$
which sends $a \otimes b$ to $a\cdot d \log b,$ is surjective. 

Note that the image of $e^{t^iu} \wedge v \wedge \lambda,$ with $u  \in R,$ $v \in R^{\times}$ and $\lambda \in k^\times$ under the $i$-th map in (\ref{cokernel})  is  $ i \cdot u \cdot d\log (v) \wedge d \log (\lambda)$ and under the coordinate $j$ maps in  (\ref{cokernel}) with $j\neq i,$ the image is 0.
Together with the above, this shows that the map 
$$
(\Lambda^{2} R_{r} ^{\times} \otimes k^{\times})^{\circ} \to (\Lambda^{3} R_{r} ^{\times})^{\circ} \to \oplus _{1 \leq i <r} t^i \Omega^{2} _{R} 
$$
is surjective and hence proves the first statement. 

 For the second statement,  note that if $\alpha \in  im((1+t^m R_{r})^{\times } \otimes \Lambda^{2}R_{r}^{\times}) \subseteq (\Lambda ^{3} R_{r} ^{\times})^{\circ}$ then its image in $\oplus _{1 \leq i <r}t^i\Omega^{2} _{R }$ lands in the summand $\oplus _{m \leq i <r}t^i\Omega^{2} _{R }$. Since by the above discussion, we also see that the composition 
 $$
(1+t^mR_{r})^{\times} \otimes R_{r} ^{\times} \otimes k^{\times} \to (\Lambda^{3} R_{r} ^{\times})^{\circ} \to \oplus _{m \leq i <r}t^i\Omega^{2} _{R }
 $$
 is surjective, the second statement similarly follows. 
 \end{proof}

  Note that the map $L_{m,r}: B_{2} (R_m) \otimes R_{m} ^{\times} \to \Omega^{1} _{R},$ and the map $\underline{L}_{m,r}:B_{2} (R_m) \otimes R_{m} ^{\times} \to \Omega^{1} _{R/k}$ obtained by composing $L_{m,r}$ with the projection from $\Omega^{1} _{R} $ to $\Omega^{1} _{R/k} $ are of $\star$-weight $r.$ Since the $\star$-weights of  ${\rm H}^2(R_{r},\mathbb{Q}(3))$ are expected to be between $r+1$ and $2r-1$ as we mentioned in the paragraph preceding Proposition \ref{middle coh},  one would expect the following lemma: 
  \begin{lemma}\label{vanish_on_kernel}
The composition
\[
 \begin{CD}
ker(\delta _r) \subseteq B_{2}(R_{r}) \otimes R_{r} ^{\times} @>{|_{t^m}}>> B_{2}(R_m) \otimes R_m ^{\times}  @>{L_{m,r}}>> \Omega^{1} _{R} 
\end{CD}
\]
is 0. 
\end{lemma}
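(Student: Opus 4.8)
The plan is to exploit the $\star$-weight decomposition together with Theorem \ref{theorem-b2}, which identifies $\ker(\delta^\circ_r)$ up to understanding which $\star$-weights occur. First I would reduce to a single $\star$-weight: the $\star$-action makes every object in sight into an $R[R^\times]$-module (in fact the relevant groups are direct sums of their weight pieces since we are working over $\mathbb{Q}$), and by Lemma \ref{lemma-descent-to-m} the map $L_{m,r}$ has $\star$-weight $r$. Hence it suffices to show that $\ker(\delta_r)\subseteq B_2(R_r)\otimes R_r^\times$ has no component in $\star$-weight $r$ that survives to $B_2(R_m)\otimes R_m^\times$; more precisely, it is enough to check that $L_{m,r}$ kills the weight-$r$ part of $\ker(\delta_r)$, because $L_{m,r}$ is identically zero outside weight $r$.

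Next I would pin down the weight-$r$ part of $\ker(\delta_r)$ in $B_2(R_r)\otimes R_r^\times$. An element of $\ker(\delta_r)$ of $\star$-weight $r$ must have the form $\sum_j \alpha_j\otimes b_j$ where the $b_j\in R^\times$ carry weight $0$ and each $\alpha_j\in B_2(R_r)$ is forced to be the weight-$r$ part of an element of $\ker(\delta_r^\circ)\subseteq B_2(R_r)$, possibly together with a correction term; but since $\delta_r(\sum_j\alpha_j\otimes b_j)=\sum_j\delta_r(\alpha_j)\wedge b_j$, the condition that this vanishes in $\Lambda^3 R_r^\times$ and tracking weights shows one may take each $\alpha_j\in\ker(\delta_r^\circ)$ of weight $r$. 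Now Theorem \ref{theorem-b2} says that the weight-$r$ part of $\ker(\delta_r^\circ)$ is one-dimensional over $R$ and is detected isomorphically by $\ell i_{m,r}$. So every element of $\ker(\delta_r)$ of weight $r$ is, after reduction mod $t^m$, an $R$-linear combination of elements $\alpha\otimes b$ with $\delta_m(\alpha|_{t^m})=0$ and $\alpha$ the weight-$r$ element with prescribed $\ell i_{m,r}(\alpha)$.

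Finally I would compute $L_{m,r}(\alpha\otimes b)$ for such an $\alpha$. This is exactly the computation already carried out in the proof of Proposition \ref{middle coh}: since $\delta_m(\alpha|_{t^m})=0$, all the terms of $L_{m,r}$ built out of $\alpha_j\circ\delta$ and $\beta_m(j)\circ\delta$ vanish except the ones where $\delta$ is not applied — but by the explicit formula (\ref{eqn lmr}) every summand of $L_{m,r}$ other than the leading one factors through $\delta$, and even the combination $\ell i_{m,j}\otimes d\ell_{r-j}$ appears only for $j\geq m$ where $\ell i_{m,j}$ is again a function of the class mod $t^m$ composed with $\delta$ when $j>m$, while $\ell i_{m,m}=0$. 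Hence $L_{m,r}(\alpha\otimes b)=\ell i_{m,r}(\alpha)\,d\log b$. Now the point: the original element lives in $\ker(\delta_r)$ \emph{before} reduction mod $t^m$, meaning $\delta_r(\alpha)=0$ already in $\Lambda^2 R_r^\times$, not merely mod $t^m$; by Definition \ref{defnlmr}, $\ell i_{m,r}=\ell_{m,r}\circ\delta_r$, so $\ell i_{m,r}(\alpha)=\ell_{m,r}(\delta_r(\alpha))=\ell_{m,r}(0)=0$. Therefore $L_{m,r}(\alpha\otimes b)=0$, and by $R$-linearity and the weight reduction the whole composition vanishes.

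The main obstacle I anticipate is the reduction in the second paragraph: justifying rigorously that an arbitrary element of $\ker(\delta_r)$ of $\star$-weight $r$ can be written, after applying $|_{t^m}$, as an $R$-combination of pure tensors $\alpha\otimes b$ with $\alpha\in\ker(\delta_r^\circ)$ and $b\in R^\times$. One must be careful that $\delta_r(\sum\alpha_j\otimes b_j)=0$ does not immediately force each $\delta_r(\alpha_j)=0$; one should instead argue that the image of $\ker(\delta_r)$ in $B_2(R_m)\otimes R_m^\times$ lies in the subgroup generated by $\ker(\delta_r^\circ|_{t^m})\otimes R^\times$ together with the image of $\delta$ from $\mathbb{Q}[R_m^\flat]$ (on which $L_{m,r}$ already vanishes by the previous proposition), and handle the $R^\times$ versus $R_m^\times$ discrepancy using that $R_m^\times=R^\times\times(1+tR_m)$ and that the $1+tR_m$ part changes the $\star$-weight. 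Alternatively, and perhaps more cleanly, one avoids the decomposition entirely by observing that $L_{m,r}\circ(|_{t^m})$ extended to all of $B_2(R_r)\otimes R_r^\times$ agrees with $\ell_{m,r}$-type expressions composed with $\delta_r\otimes\mathrm{id}$ as in Corollary \ref{comultiplication}, which manifestly vanish on $\ker(\delta_r)$.
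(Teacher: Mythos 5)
Your main argument has a genuine gap at its second step, and you have correctly located it yourself. The claim that, after reduction mod $t^m$ and up to boundaries from $\mathbb{Q}[R_r^{\flat}]$, the $\star$-weight-$r$ part of $\ker(\delta_r)$ is generated by tensors $\alpha\otimes b$ with $\alpha\in\ker(\delta_r^{\circ})\subseteq B_2(R_r)$ and $b\in R^{\times}$ is neither proved nor provable from anything in the paper. The condition $\delta_r\bigl(\sum_j\alpha_j\otimes b_j\bigr)=0$ in $\Lambda^3R_r^{\times}$ only says that $\sum_j\delta(\alpha_j)\otimes b_j$ lies in the kernel of the antisymmetrization $\Lambda^2R_r^{\times}\otimes R_r^{\times}\to\Lambda^3R_r^{\times}$, which is a large complement of $\Lambda^3$ inside $\Lambda^2\otimes(\,\cdot\,)$; it does not force any $\delta(\alpha_j)$ to vanish. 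Already $[x]\otimes x$, or $\alpha\otimes b$ with $\delta(\alpha)=b\wedge c$, lie in $\ker(\delta_r)$ with $\delta$ of the $B_2$-part nonzero, and such elements are not visibly accounted for by your decomposition. Pinning down $\ker(\delta_r)$ modulo $\mathrm{im}(\delta)$, even in a single $\star$-weight, is essentially the computation of ${\rm H}^{2}(R_r,\mathbb{Q}(3))$ that the paper explicitly leaves open (only surjectivity of the regulator is proved in Proposition \ref{middle coh}), so this route cannot be completed with the available tools. A smaller issue in your first step: $B_2(R_r)\otimes R_r^{\times}$ is not known to split as a direct sum of $\star$-weight spaces; the paper's own device for exploiting homogeneity is the iterated operator $a\star(-)-a^{i}(-)$ used in the proof of Lemma \ref{strongvanlemma}, not a weight decomposition.

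The paper's actual proof is the one-sentence alternative you offer at the very end: Lemma \ref{vanish_on_kernel} is deduced directly from Corollary \ref{comultiplication}, i.e.\ from the fact that $L_{m,r}\circ(|_{t^m})$ factors through $\delta\otimes\mathrm{id}\colon B_2(R_r)\otimes R_r^{\times}\to\Lambda^2R_r^{\times}\otimes R_r^{\times}$. That is the argument to develop, in place of the weight-decomposition route. Note, however, that even there the word "manifestly" is too quick: factoring through $\delta\otimes\mathrm{id}$ immediately kills $\ker(\delta\otimes\mathrm{id})$, but $\ker(\delta_r)$ is a priori larger, consisting of elements whose image in $\Lambda^2R_r^{\times}\otimes R_r^{\times}$ is nonzero yet dies under the further map to $\Lambda^3R_r^{\times}$; to finish one must say why the induced map on $\Lambda^2R_r^{\times}\otimes R_r^{\times}$ vanishes on that kernel as well.
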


\begin{proof} This follows immediately from Corollary \ref{comultiplication}.
\end{proof}

\begin{remark} We emphasize that in the above lemma our assumption is that $\delta_r(\gamma)=0$ in $\Lambda ^{3} R_{r} ^{\times}$ and {\it not the weaker assumption} that $\delta_m(\gamma|_{t^m})=0.$ In fact, there are elements such that $\delta_m(\gamma|_{t^m})=0$ and   $L_{m,r}(\gamma|_{t^m}) \neq 0.$ 
\end{remark}

\begin{lemma}
Suppose that $\gamma \in B_{2}(R_r) \otimes R_r ^{\times}$   such that 
 $$ \delta_r(\gamma) \in  im(\big(\oplus_{0 \leq s<r}   (1+t^sR_r)^{\times } \otimes (1+t^{r-s}R_r)^{\times}    \big) \otimes k^{\times})\subseteq \Lambda ^3 R_r ^{\times}$$
 then $\underline{L}_{m,r}(\gamma|_{t^m})=0.$ 
\end{lemma}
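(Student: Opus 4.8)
The goal is to show that if $\gamma \in B_2(R_r)\otimes R_r^\times$ has $\delta_r(\gamma)$ lying in the image of $\big(\oplus_{0\leq s<r}(1+t^sR_r)^\times\otimes(1+t^{r-s}R_r)^\times\big)\otimes k^\times$ inside $\Lambda^3 R_r^\times$, then $\underline{L}_{m,r}(\gamma|_{t^m})=0$. The strategy is to decompose the problem by $\star$-weight and then exploit the fact that $\underline{L}_{m,r}$ is of $\star$-weight $r$ (Lemma \ref{lemma-descent-to-m}), together with the vanishing on genuine kernel elements (Lemma \ref{vanish_on_kernel}). First I would write the hypothesis symbol-wise: $\delta_r(\gamma) = \sum_k (1+t^{s_k}a_k)\wedge(1+t^{r-s_k}b_k)\wedge\lambda_k$ with $a_k,b_k\in R_r$, $\lambda_k\in k^\times$, $0\leq s_k<r$. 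The key observation is that each such generator is itself a boundary: for a fixed pair $(s, r-s)$ with both $s\geq 1$, one can produce an element $\eta$ of $B_2(R_r)\otimes R_r^\times$ with $\delta_r(\eta)$ equal to that generator, using the standard identity relating $\delta[x]=(1-x)\wedge x$ to Steinberg-type symbols (for instance, exploiting that $(1+t^sa)\wedge(1+t^{r-s}b)$, when $2s>r$ is not forced, can be expressed via $\delta$ of dilogarithm symbols analogous to the identity $2\{1+t^2/2,\lambda\} = \{1+t/\lambda,1+\lambda t\}$ proven above in Section 3); when $s=0$ the corresponding factor is a unit constant mod lower order and the symbol reduces to something in $\Lambda^2 R^\times\otimes k^\times$ composed with $\star$-weight bookkeeping. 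I would make this precise using the Milnor $K$-theory computation: such symbols with a constant entry $\lambda\in k^\times$ land in a controlled piece of $K_3^M(R_r)^\circ$.

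The cleaner route, which I would actually pursue, is the following. Set $\gamma' := \gamma - \eta$ where $\eta\in B_2(R_r)\otimes R_r^\times$ is chosen with $\delta_r(\eta)=\delta_r(\gamma)$ and $\eta$ built out of the given generators of the hypothesized form; then $\gamma'\in\ker(\delta_r)$, so by Lemma \ref{vanish_on_kernel} we have $\underline{L}_{m,r}(\gamma'|_{t^m})=0$, hence $\underline{L}_{m,r}(\gamma|_{t^m})=\underline{L}_{m,r}(\eta|_{t^m})$. So the whole problem reduces to computing $\underline{L}_{m,r}$ on these explicit model elements $\eta$ coming from symbols $(1+t^sa)\wedge(1+t^{r-s}b)\wedge\lambda$. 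For such an $\eta$ I would use the explicit formula (\ref{eqn lmr}) for $L_{m,r}$ together with the fact that $\log^\circ(1+t^sa)$ starts in degree $s$ and $\log^\circ(1+t^{r-s}b)$ starts in degree $r-s$, so the relevant truncations $\ell_i$ at indices $i<m$ vanish unless $s<m$ and $r-s<m$ simultaneously — but $s<m$ and $r-s<m$ together force $r<2m$ which is allowed, yet the product-of-degrees constraint combined with the $\star$-weight $r$ bookkeeping kills the contributions: each term of $L_{m,r}$ pairs a $d\ell_a\wedge\ell_b$ (or $\ell i_{m,j}$) with an $\ell_c$ or $d\ell_0$ with $a+b+c=r$ (resp. $j + (r-j)$), and evaluating on a symbol whose two non-constant factors have log-valuations $s$ and $r-s$ forces $a\geq s$ and $b\geq r-s$ (or vice versa), whence $c\leq 0$, which is impossible since the $\ell_c$ appearing have $c\geq 1$ and $\ell i_{m,j}$ needs $j\geq m\geq 2$; the only surviving term is the $d\ell_0$ piece, but that picks out the constant $\lambda\in k^\times$ and contributes $d\log\lambda\in\Omega^1_k$, which dies in $\Omega^1_{R/k}$. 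This last point is precisely why the target must be relative differentials, matching the remark in the paper that $\Omega_{m,r}$ is well-defined only after passing to relative forms.

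I would organize the computation as two cases: (a) both $s\geq 1$ and $r-s\geq 1$, where after reducing mod $t^m$ one checks that either $\gamma|_{t^m}$ already dies (if $s\geq m$ or $r-s\geq m$, since then one factor is $\equiv 1$) or, if $s,r-s<m$, the valuation bookkeeping above forces every term with an $\ell_c$, $c\geq1$, or an $\ell i_{m,j}$, $j\geq m$, to vanish, leaving only $\ell i_{m,r}\otimes d\ell_0$, and on a symbol with the $k^\times$-entry $\lambda$ this yields a multiple of $d\log\lambda$, zero in $\Omega^1_{R/k}$; (b) $s=0$ (or $r-s=0$), where the corresponding $B_2$-or-$\wedge$ factor is a unit constant and the element lies in the image of $(\Lambda^2 R_r^\times\otimes k^\times)^\circ$ — here one invokes directly that $L_{m,r}$ restricted to such elements factors through $\Omega^1_k$, essentially by the same $d\ell_0$-only argument, hence vanishes in $\Omega^1_{R/k}$. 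The main obstacle, and where I expect the real work to be, is step (a) with $1\leq s, r-s< m$: one has to honestly produce the model element $\eta\in B_2(R_r)\otimes R_r^\times$ with the prescribed boundary (this is a Steinberg-symbol manipulation in the $B_2$-complex, the weight-3 analog of the lemma $2\{1+t^2/2,\lambda\}=\{1+t/\lambda,1+\lambda t\}$), and then grind the explicit formula (\ref{eqn lmr}) for $L_{m,r}$ against it to confirm that all terms except the $d\ell_0$-term genuinely vanish and the $d\ell_0$-term is a pure $k$-differential. Everything else is formal: the reduction via $\ker(\delta_r)$ and Lemma \ref{vanish_on_kernel}, the $\star$-weight decomposition, and the passage from $\Omega^1_R$ to $\Omega^1_{R/k}$.
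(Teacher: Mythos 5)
Your proposal follows essentially the same route as the paper: pick $\tilde\alpha=\alpha_0\otimes\lambda\in B_2(R_r)\otimes R_r^{\times}$ with $\delta_r(\tilde\alpha)=\delta_r(\gamma)$, observe that on such an element only the $\ell i_{m,r}\otimes d\ell_0$ term of $L_{m,r}$ survives and outputs a multiple of $d\log\lambda$, which dies in $\Omega^1_{R/k}$, and then finish by applying Lemma \ref{vanish_on_kernel} to $\gamma-\tilde\alpha$. The one place you overcomplicate is the existence of $\alpha_0\in B_2(R_r)$ with $\delta_r(\alpha_0)=(1+t^{s}a)\wedge(1+t^{r-s}b)$: no Steinberg-identity gymnastics are needed, since by Corollary \ref{cor milnor} the obstruction lies in $K_2^{M}(R_r)^{\circ}\simeq\oplus_{1\leq a<r}t^{a}\Omega^1_{R}$ and every residue $res_{t=0}\,t^{-a}\,d\log(1+t^{s}a)\wedge d\log(1+t^{r-s}b)$ vanishes because the $2$-form has $t$-adic valuation at least $r$.
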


\begin{proof} 
First let $\alpha:=e^{ut^i}\wedge e^{vt^j}\wedge \lambda \in \Lambda ^{3} R_{r} ^{\times}$ with  $u, \, v \in R$ and $\lambda \in k^{\times}$ and  $r \leq i+j.$ We have  $e^{ut^i}\wedge e^{vt^j} \in (\Lambda ^{2} R_{r} ^{\times})^{\circ}$ and 
$$
res_{t} \frac{1}{t^a}d \log (e^{ut^i})\wedge d \log (e^{vt^j})=res_{t} \frac{1}{t^a}d  (ut^i)\wedge d (vt^j)=0 \in \Omega^{1} _{R},
$$
for all $1\leq a  < r,$ since $r \leq i+j .$ This implies that there is $\alpha_0 \in B_{2}(R_{r})$ such that $\delta_r (\alpha_0)= e^{ut^i}\wedge e^{vt^j},$ and hence $\delta_r(\alpha_0 \otimes \lambda)=\alpha.$ 

Let us compute $L_{m,r}((\alpha_0 \otimes \lambda)|_{t^m}).$ Since $\ell _i (\lambda) =0, $ for $0<i$ by the formula for $L_{m,r}$ we see that  
$$
L_{m,r}((\alpha_0 \otimes \lambda)|_{t^m})=\ell i _{m,r}(\alpha_0 |_{t^m}) \cdot d \log (\lambda) \in \Omega^{1} _{R}.
 $$
 This expression vanishes in $\Omega^{1} _{R/k}$ and therefore $\underline{L}_{m,r}((\alpha_{0} \otimes \lambda)|_{t^m})=0.$ Taking the sum of expressions such as above, we deduce that if 
 $$
 \alpha \in  im(\big(\oplus_{0 \leq s<r}   (1+t^sR_r)^{\times } \otimes (1+t^{r-s}R_r)^{\times}    \big) \otimes k^{\times})\subseteq \Lambda ^3 R_r ^{\times}
 $$ then there is a $\tilde{\alpha} \in B_{2}(R_{r}) \otimes R_{r} ^{\times} $ such that $\delta_r (\tilde{\alpha})=\alpha$  and $\underline{L}_{m,r}(\tilde{\alpha}|_{t^m})=0.$ 
 
 Applying this to $\alpha:=\delta_r(\gamma) $ we deduce that there exists $\tilde{\alpha} \in B_{2}(R_{r}) \otimes R_{r} ^{\times} $ such that $\delta_r (\tilde{\alpha})=\delta_r(\gamma)$  and $\underline{L}_{m,r}(\tilde{\alpha}|_{t^m})=0.$
Then we have $\underline{L}_{m,r}(\gamma|_{t^m})=\underline{L}_{m,r}(\tilde{\alpha}|_{t^m})+\underline{L}_{m,r}((\gamma-\tilde{\alpha})|_{t^m})=\underline{L}_{m,r}((\gamma-\tilde{\alpha})|_{t^m}).$ Since $\delta_r(\gamma-\tilde{\alpha})=0,$ the last expression is 0 by Lemma \ref{vanish_on_kernel}. 
\end{proof}

\begin{lemma}\label{strongvanlemma}
Suppose that $\gamma \in B_{2}(R_r) \otimes R_r ^{\times}$   such that $\delta_r(\gamma) \in  im( (\Lambda ^2 R_r^{\times})^{\circ}\otimes k^{\times}) \subseteq \Lambda^{3} R_{r} ^{\times} $ then $\underline{L}_{m,r}(\gamma|_{t^m})=0.$ 
\end{lemma}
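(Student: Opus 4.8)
The statement generalizes the previous lemma, where the hypothesis was that $\delta_r(\gamma)$ lies in the image of $\big(\oplus_{0\le s<r}(1+t^sR_r)^\times\otimes(1+t^{r-s}R_r)^\times\big)\otimes k^\times$; here we only know $\delta_r(\gamma)\in im\big((\Lambda^2R_r^\times)^\circ\otimes k^\times\big)$. The plan is to reduce the general case to the case already handled. A general generator of $(\Lambda^2R_r^\times)^\circ\otimes k^\times$ has the form $a\wedge b\otimes\lambda$ with $a\wedge b\in(\Lambda^2R_r^\times)^\circ$ and $\lambda\in k^\times$; writing $a=a(0)e^{u}$, $b=b(0)e^{v}$ with $u,v\in tR_\infty$ (reduced mod $t^r$) and noting $a(0),b(0)\in R^\times$, we can expand $a\wedge b$ into a sum of terms $a(0)\wedge b(0)$, $a(0)\wedge e^{v}$, $e^{u}\wedge b(0)$, and $e^{u}\wedge e^{v}$. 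The constant term $a(0)\wedge b(0)$ contributes nothing to the infinitesimal part, and the mixed terms $a(0)\wedge e^{v}$, $e^{u}\wedge b(0)$ each give, after wedging with $\lambda$, elements of $R^\times\otimes(1+tR_r)^\times\otimes k^\times$, which already fall under the hypothesis of the preceding lemma (taking $s=0$). So the only genuinely new contribution is $e^{u}\wedge e^{v}\otimes\lambda$ with $u,v\in tR_\infty$ arbitrary modulo $t^r$; further expanding $e^{u}=\prod_i e^{u_it^i}$, $e^{v}=\prod_j e^{v_jt^j}$ multiplicatively, we reduce to generators $e^{u_it^i}\wedge e^{v_jt^j}\otimes\lambda$ with $1\le i,j<r$.

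The heart of the argument is therefore the case $\delta_r(\gamma)=e^{ut^i}\wedge e^{vt^j}\otimes\lambda$ with $1\le i,j$ and $i+j<r$ (the case $i+j\ge r$ being covered by the previous lemma). First I would note that, since $\delta_r$ is surjective onto its image and the cokernel is detected by the residue maps $\lambda_i$ of Corollary~\ref{cor milnor}, I can choose a specific preimage: I claim $e^{ut^i}\wedge e^{vt^j}$ lifts to $B_2(R_r)$, i.e. there is $\alpha_0\in B_2(R_r)$ with $\delta_r(\alpha_0)=e^{ut^i}\wedge e^{vt^j}$, because $res_{t}\frac1{t^a}d(ut^i)\wedge d(vt^j)$ can be arranged to vanish for all $a$ — but here, unlike the $i+j\ge r$ case, this residue need not vanish automatically when $i+j<r$. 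This is the main obstacle: for $i+j<r$ the class $e^{ut^i}\wedge e^{vt^j}$ may be a nonzero element of $K_2^M(R_r)^\circ$, so it does not lift to $B_2(R_r)$ on its own. The resolution is that we are allowed to absorb a correction term into $\varepsilon$: by Lemma~\ref{redtok} we may first subtract from $\gamma$ a preimage of a suitable element of $(\Lambda^2R_r^\times\otimes k^\times)^\circ$ chosen to kill the $K_3^M$-obstruction of $\delta_r(\gamma)$, reducing to the case where $\delta_r(\gamma')$ genuinely lies in $im(B_2(R_r)\otimes R_r^\times)$; but this does not obviously help because we would still need to control $\underline L_{m,r}$ on the correction.

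The cleaner route, which I would pursue, is to use the explicit structure of $\ell_{m,r}$ directly. By Lemma~\ref{vanish_on_kernel} it suffices to produce \emph{some} preimage $\tilde\alpha\in B_2(R_r)\otimes R_r^\times$ of $\delta_r(\gamma)$ with $\underline L_{m,r}(\tilde\alpha|_{t^m})=0$; then $\gamma-\tilde\alpha\in\ker\delta_r$ and Lemma~\ref{vanish_on_kernel} gives $\underline L_{m,r}(\gamma|_{t^m})=\underline L_{m,r}(\tilde\alpha|_{t^m})=0$. For the generator $e^{ut^i}\wedge e^{vt^j}\otimes\lambda$, I would write it as $\delta_r(\beta)\wedge\lambda$-type expression by exhibiting an explicit element $\beta\in B_2(R_r)$ — using a five-term-relation identity analogous to the Lemma relating $\{1+\tfrac{t^2}2,\lambda\}$ to $\{1+\tfrac t\lambda,1+\lambda t\}$, applied in higher degree — such that $\delta_r(\beta)=e^{ut^i}\wedge e^{vt^j}$ plus a term lying in $\big(\oplus_s(1+t^sR_r)^\times\otimes(1+t^{r-s}R_r)^\times\big)$, i.e. precisely the situation already handled by the previous lemma. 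Concretely, since every element of $(1+tR_r)^\times$ is a product of $e^{a t^k}$'s, it is enough to verify that $\{e^{at^i}\}\cdot\{e^{bt^j}\}\in K_2^M(R_r)^\circ$, when expressed through $\varphi$ of Proposition~\ref{milnor kahler}, maps to $ab\,i\,t^{i+j}\,db/b\wedge\dots$ — wait, more precisely its image under $\varphi$ lies in $\star$-weight $i+j<r$ and I can realize it as $\delta_r$ of an element of $B_2$ whose $\ell_{m,r}$-value vanishes because $\ell i_{m,r}$ on such an element involves $t_{r-1}$ of a product whose relevant coefficient is zero when $i+j<r$ and the $k$-linear factor $\lambda$ forces the output into $k\,d\log k^\times$, which dies in $\Omega^1_{R/k}$. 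Thus, combining all these reductions, $\underline L_{m,r}(\gamma|_{t^m})$ vanishes in $\Omega^1_{R/k}$, and the hard part — handling the ``short'' wedges $e^{ut^i}\wedge e^{vt^j}$ with $i+j<r$ that were genuinely new compared to the previous lemma — is exactly where the interplay between the $K_2^M$-computation and the explicit formula for $\ell_{m,r}$ must be used.
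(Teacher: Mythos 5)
There is a genuine gap, and it sits exactly at the point you flag as ``the hard part.'' Your strategy is to decompose $\delta_r(\gamma)$ into generators $a(0)\wedge b(0)\wedge\lambda$, $a(0)\wedge e^{v}\wedge\lambda$, $e^{u}\wedge b(0)\wedge\lambda$, $e^{u_it^i}\wedge e^{v_jt^j}\wedge\lambda$ and to produce a preimage of $\delta_r(\gamma)$ under $\delta_r$ generator by generator. This cannot work: an individual generator with total $t$-adic weight $<r$ in its first two slots is in general \emph{not} a boundary. Its obstruction lives in $K_{2}^{M}(R_r)^{\circ}\simeq\oplus_{1\leq a<r}t^a\Omega^{1}_{R}$ (Corollary \ref{cor milnor}); e.g.\ $e^{ut^i}\wedge e^{vt^j}$ with $i+j<r$ has image $\pm(iu\,dv-jv\,du)$ in degree $i+j$, and $a(0)\wedge e^{vt^j}$ has image $\pm jv\,d\log a(0)$ in degree $j$, both generically nonzero. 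Only the \emph{total} element $\delta_r(\gamma)$ is a boundary; the individual summands of your decomposition are not, so no $\alpha_0\otimes\lambda$ lifting each piece exists. Your proposed repair --- finding $\beta\in B_2(R_r)$ with $\delta_r(\beta)=e^{ut^i}\wedge e^{vt^j}$ plus weight-$\geq r$ terms via a higher-degree five-term identity --- cannot succeed either, because adding terms of weight $\geq r$ does not alter the nonvanishing obstruction in weight $i+j<r$. Relatedly, your claim that the mixed terms $a(0)\wedge e^{v}\otimes\lambda$ ``fall under the hypothesis of the preceding lemma (taking $s=0$)'' is false: the $s=0$ summand there is $(1+R_r)^{\times}\otimes(1+t^rR_r)^{\times}\otimes k^{\times}$, and $(1+t^rR_r)^{\times}=\{1\}$ in $R_r$, so that summand is trivial and does not cover these terms.

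The paper's proof avoids any decomposition of $\delta_r(\gamma)$ and instead exploits the $\star$-action, which you mention only in passing and never use. Fix $a\in\mathbb{Z}_{>1}$ and set $\gamma^{[-1]}=\gamma$, $\gamma^{[i]}=a\star\gamma^{[i-1]}-a^{i}\gamma^{[i-1]}$ for $0\leq i<r$. Each such operator annihilates the $\star$-weight-$i$ component of $\delta(\gamma^{[i-1]})$, so $\delta(\gamma^{[r-1]})$ has only components of weight $\geq r$ in the first two slots, i.e.\ it lands in $im\big(\big(\oplus_{0\leq s<r}(1+t^sR_r)^{\times}\otimes(1+t^{r-s}R_r)^{\times}\big)\otimes k^{\times}\big)$, which is exactly the previous lemma's hypothesis; hence $\underline{L}_{m,r}(\gamma^{[r-1]}|_{t^m})=0$. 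On the other hand $L_{m,r}$ is homogeneous of $\star$-weight $r$ (Lemma \ref{lemma-descent-to-m}), so $L_{m,r}(\gamma^{[i]}|_{t^m})=(a^{r}-a^{i})L_{m,r}(\gamma^{[i-1]}|_{t^m})$, and since $\prod_{0\leq i<r}(a^{r}-a^{i})\neq 0$ the vanishing propagates back to $\underline{L}_{m,r}(\gamma|_{t^m})=0$. If you want to salvage your write-up, replace the generator-by-generator lifting with this averaging argument; the $K_{2}^{M}$ computation you invoke is then not needed at all.
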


\begin{proof} 

Suppose that $\gamma$ is as in the statement of the lemma. Fix some $a \in \mathbb{Z}_{>1}.$  We inductively define $\gamma ^{[i]}$ as follows. Let $\gamma^{[-1]}=\gamma,$ and 
$$
\gamma ^{[i]}:=a \star \gamma ^{[i-1]}-a^{i} \gamma^{[i-1]},
$$
for $0\leq i <r.$ Since $L_{m,r}$ is of weight $r$, $L_{m,r}(\gamma ^{[i]}|_{t^m})=(a^r-a^i)L_{m,r}(\gamma ^{[i-1]}|_{t^m}).$ Therefore proving that $\underline{L}_{m,r}(\gamma|_{t^m})=0$ is equivalent to proving that $\underline{L}_{m,r}(\gamma ^{[r-1]}|_{t^m})=0.$ On the other hand, 
$$
\delta (\gamma ^{[r-1]}) \subseteq im(\big(\oplus_{0 \leq s<r}   (1+t^sR_r)^{\times } \otimes (1+t^{r-s}R_r)^{\times}    \big) \otimes k^{\times}),$$
and therefore the previous lemma implies that $\underline{L}_{m,r}(\gamma ^{[r-1]}|_{t^m})=0.$
\end{proof}
These lemmas together finish the proof that $\Omega_{m,r}$ is well-defined as follows. Starting with $\alpha \in I_{m,r},$ we know, by Lemma \ref{redtok}, that there exists $\varepsilon \in im((1+(t^m))\otimes k^{\times} \otimes R_{r}^{\times}) \subseteq (\Lambda ^{3} R_{r} ^{\times})^{\circ},$ such that 
$\alpha -\varepsilon =\delta_r (\gamma) $ for some $\gamma \in (B_{2}(R_{r})\otimes R_{r} ^{\times})^{\circ}.$ In order to see that $\Omega_{m,r}(\alpha):=\underline{L}_{m,r}(\gamma|_{t^m}) \in \Omega^{1} _{R/k}$ is well-defined, suppose that $\varepsilon'  \in im((1+(t^m))\otimes k^{\times} \otimes R_{r}^{\times}) \subseteq (\Lambda ^{3} R_{r} ^{\times})^{\circ}$ and $\gamma' \in (B_{2}(R_{r})\otimes R_{r} ^{\times})^{\circ}$ are another such choices. Then $\delta_r(\gamma'-\gamma)=\varepsilon -\varepsilon' \in im( (\Lambda ^2 R_r^{\times})^{\circ}\otimes k^{\times}) \subseteq \Lambda^{3} R_{r} ^{\times}   $ and hence $\underline{L}_{m,r}((\gamma'-\gamma)|_{t^m})=0$ by Lemma \ref{strongvanlemma}.
 
 The following proposition gives an explicit expression for $\Omega_{m,r}.$ 
 
 \begin{proposition}\label{formula Omega}
Suppose that $x \geq m$ and $x+y+z=r,$ with $y$ or $z$ possibly 0, then 
$$
\Omega_{m,r}(e^{at^x}\wedge e^{bt^y}\wedge e^{ct^z})=a(yb\cdot dc- zc \cdot db).
$$ Here when $y=0$,   we let $e^b$ denote an arbitrary element of $R^{\times }$ and  $yb$ denote 0 and $db$ denote $d \log (e^b)=\frac{d(e^b)}{e^b}.$  Note that when $y=0,$ the expression $b$ does not make sense. Similarly, for $zc$ and $dc$ when $z=0.$   
 \end{proposition}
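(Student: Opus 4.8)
The plan is to reduce to an explicit computation by choosing a convenient $\gamma$ with $\delta_r(\gamma)$ equal to the given wedge $\alpha := e^{at^x}\wedge e^{bt^y}\wedge e^{ct^z}$ modulo the ``bad'' part that gets killed by the construction. First I would treat the generic case $y,z\geq 1$. Since $x\geq m$ and $x+y+z=r$, one has $y+z=r-x\leq r-m<m$, so in particular $1\leq y,z<m$ and $y+z<m\leq x$. The key input is Proposition \ref{milnor kahler} / Corollary \ref{cor milnor}: the element $e^{bt^y}\wedge e^{ct^z}\in(\Lambda^2 R_r^\times)^\circ$ has all residues $res_{t=0}\frac{1}{t^i}d(bt^y)\wedge d(ct^z)$ for $1\leq i<r$ computable, and the only possibly nonzero one occurs at $i=y+z$, giving $yz\,(t^{y+z-?})$-type terms in $\Omega^1_R$. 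The strategy is to find $\gamma_0\in B_2(R_r)$ with $\delta_r(\gamma_0)=e^{bt^y}\wedge e^{ct^z}-(\text{a }k\text{-split correction})$; then $\delta_r(\gamma_0\otimes e^{at^x})=\alpha-\varepsilon$ with $\varepsilon\in im((1+t^mR_r)^\times\otimes k^\times\otimes R_r^\times)$ (here using $x\geq m$ so the first factor lies in $1+t^mR_r$), so that by definition $\Omega_{m,r}(\alpha)=\underline{L}_{m,r}((\gamma_0\otimes e^{at^x})|_{t^m})$.

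Next I would compute $\underline{L}_{m,r}((\gamma_0\otimes e^{at^x})|_{t^m})$ directly from the formula (\ref{eqn lmr}) for $L_{m,r}$. Because $e^{at^x}$ has $\ell_i(e^{at^x})=0$ for $i\neq x$ and $\ell_x=a$, and because $x\geq m$ forces the truncation $e^{at^x}|_{t^m}=1$, the ``$\otimes d\ell_0$'' term and most terms vanish; the surviving contribution comes from the terms $\ell i_{m,j}\otimes d\ell_{r-j}$ and $\frac{r-j}{j}\beta_m(j)\otimes\ell_{r-j}$, evaluated on $\delta\otimes id$ of $\gamma_0\otimes e^{at^x}$, picking out $\ell_{r-j}$ or $d\ell_{r-j}$ at $r-j=x$, i.e. $j=y+z$. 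Since $\alpha_{y+z}(\delta(\gamma_0))$ and $\ell i_{m,y+z}(\gamma_0|_{t^m})$ are built from $\ell_y,\ell_z$ of the factors $1-(\cdots)$, and $\gamma_0$ was chosen so that $\delta(\gamma_0)=e^{bt^y}\wedge e^{ct^z}$ up to split terms, the computation collapses to a residue pairing that yields exactly $a(yb\,dc-zc\,db)$. I would then handle the boundary cases $y=0$ (resp. $z=0$) separately: here $e^b\in R^\times$ is a unit with $\ell_i(e^b)=0$ for $i>0$, so $\delta_r$ of a suitable $\gamma_0\in B_2$ with the single nilpotent factor times $e^b$ reproduces $\alpha$ up to a $k^\times$-split term, and the formula degenerates as stated ($yb\to 0$, $db\to d\log e^b$).

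The main obstacle I anticipate is the bookkeeping in the middle step: one must exhibit the explicit $\gamma_0\in B_2(R_r)$ (or rather its class, via a sum of Steinberg-type symbols $[se^u]$) whose boundary is $e^{bt^y}\wedge e^{ct^z}$ modulo the $k$-split and kernel-of-residue parts, and then track how $\ell i_{m,j}$, $\alpha_j\circ\delta$ and the $\beta_m(j)$ act on it after truncation mod $t^m$ — the combinatorial coefficients $\frac{r-j}{j}$, $b$ (from $\alpha_j$), and the $j-a$, $a$ weights must all conspire to give the clean answer $a(yb\,dc-zc\,db)$. A secondary subtlety is ensuring the correction term really lands in $im((1+t^mR_r)^\times\otimes k^\times\otimes R_r^\times)$ and not merely in $(\Lambda^2R_r^\times)^\circ\otimes k^\times$, so that the well-definedness established via Lemma \ref{strongvanlemma} and Lemma \ref{redtok} applies; this is where $x\geq m$ is used essentially. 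Once the generic-case identity is nailed down, antisymmetry of $\Omega_{m,r}$ in the last two slots (which holds since it factors through $\Lambda^3$) and the degenerate-case analysis finish the proof.
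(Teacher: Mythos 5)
Your overall framework (exhibit an explicit element of $B_{2}(R_r)\otimes R_r^{\times}$ whose boundary is the given wedge up to an allowable correction, then evaluate $\underline{L}_{m,r}$ on its truncation) is the right one, and your treatment of the degenerate cases $y=0$ or $z=0$ is essentially what the paper does: there the first two factors already carry total $\star$-weight $r$, so $e^{at^x}\wedge e^{bt^y}$ has vanishing image in $K_2^M(R_r)^{\circ}\simeq\oplus_{1\leq i<r}t^i\Omega^1_R$ and lifts to $B_2^{\circ}(R_r)$ with no correction needed. The gap is in your main case $y,z\geq 1$, where you group the two \emph{small} factors and seek $\gamma_0\in B_2(R_r)$ with $\delta_r(\gamma_0)=e^{bt^y}\wedge e^{ct^z}-(\text{$k$-split correction})$. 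No such $\gamma_0$ exists: since $x\geq m$ and $r<2m$, one has $1\leq y+z=r-x<m<r$, so the class of $e^{bt^y}\wedge e^{ct^z}$ in $K_2^M(R_r)^{\circ}$ sits in the degree-$(y+z)$ summand and equals $\pm(zc\,db-yb\,dc)$, which has nonzero image in $\Omega^1_{R/k}$ in general; a $k$-split element $v\wedge\lambda$ with $\lambda\in k^{\times}$ only contributes to $R\cdot d\log(k^{\times})$, which dies in $\Omega^1_{R/k}$, so it cannot cancel this obstruction. (The surjectivity argument of Lemma \ref{redtok} works for $\Lambda^3$ because $\Omega^1_R\otimes_k\Omega^1_k\twoheadrightarrow\Omega^2_R$; the analogous statement for $\Omega^1_R$ itself is false.) There is also a second, independent reason your recipe cannot produce the stated answer: since $x\geq m$, the truncation $e^{at^x}|_{t^m}=1$, hence $(\gamma_0\otimes e^{at^x})|_{t^m}=\gamma_0|_{t^m}\otimes 1=0$ in $B_2(R_m)\otimes R_m^{\times}$, and your formula would output $0$ rather than $a(yb\,dc-zc\,db)$.

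The paper's actual argument keeps the weight-$\geq m$ factor in the $B_2$-slot. It sets $\theta:=e^{at^x}\wedge e^{bt^y}$, whose Milnor obstruction $yb\,da-xa\,db$ lives in degree $x+y$, and cancels it not by a $k$-split term but by $\varphi:=\frac{x}{x+y}e^{abt^{x+y}}\wedge b-\frac{y}{x+y}e^{abt^{x+y}}\wedge a$, which has the same obstruction; thus $\theta-\varphi=\delta(\gamma_0)$ for some $\gamma_0\in B_2(R_r)$. Since $e^{abt^{x+y}}\wedge e^{ct^z}$ has total weight $r$ it equals $\delta(\varepsilon_0)$, and one gets the exact decomposition $e^{at^x}\wedge e^{bt^y}\wedge e^{ct^z}=\delta(\gamma_0\otimes e^{ct^z})-\frac{x}{x+y}\delta(\varepsilon_0\otimes b)+\frac{y}{x+y}\delta(\varepsilon_0\otimes a)$ with $\varepsilon=0$. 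Each of the three terms is then evaluated via (\ref{eqn lmr}) and (\ref{twodilogs}) (using $\ell i_{m,x+y}(\gamma_0|_{t^m})=yab$, $\ell i_{m,r}(\varepsilon_0|_{t^m})=zabc$), and the pieces recombine to $a(yb\,dc-zc\,db)$. To repair your proof you would need to adopt this (or an equivalent) grouping; as written, the reduction step in the generic case does not go through.
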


\begin{proof}

(i) Case when $y=z=0.$ In this case, we need to compute the image of $e^{at^r}\wedge \beta \wedge \gamma$ under $\Omega_{m,r},$ where $\beta, \, \gamma \in R^{\times}.$ The image of $e^{at^r}\wedge \beta$ in $\oplus _{1 \leq i \leq r-1} t^i\Omega^1 _{R}$ is equal to 0. Therefore, there is $\alpha \in B_{2} ^{\circ}(R_r)$ such that $\delta_{r}(\alpha)=e^{at^r}\wedge \beta.$ Then, by definition,        
\begin{eqnarray}\label{local omega form}
\Omega_{m,r}(e^{at^r}\wedge \beta \wedge \gamma)=\underline{L}_{m,r}((\alpha \otimes \gamma)|_{t^m}).
\end{eqnarray}
On the other hand, by (\ref{eqn lmr}), $L_{m,r}(\alpha|_{t^m} \otimes \gamma)=\ell i _{m,r}(\alpha|_{t^m}) \frac{d\gamma}{\gamma}.$  By the expression (\ref{twodilogs}) for $\ell i_{m,r},$ we have 
$$
\ell i _{m,r}(\alpha|_{t_m})=\Big(\Lambda ^{2} \log^{\circ} (\delta_r(\alpha))| \sum _{1\leq i \leq r-m} it^{r-i}\wedge t^i\Big)
=0,
$$
since $\Lambda ^{2} \log^{\circ} (\delta_r(\alpha))=\Lambda ^{2} \log^{\circ}(e^{at^r}\wedge \beta)=0.$ By the above formula (\ref{local omega form}), this implies that  $\Omega_{m,r}(e^{at^r}\wedge \beta \wedge \gamma)=0$ as we wanted to show. 

(ii) Case when  $y \neq 0$ and $z=0.$ In this case we try to compute the image of $e^{at^x}\wedge e^{bt^y}\wedge \gamma$ under $\Omega_{m,r}.$ Here we assume that $\gamma \in R^{\times}$ and $x+y=r,$ with $x\geq m.$ By exactly the same argument as above, we deduce that there exists $\alpha \in B_{2} ^{\circ}(R_{r})$  such that $\delta _r(\alpha)=e^{at^x}\wedge e^{bt^y}$ and we have 
\begin{eqnarray*}
\Omega_{m,r}(e^{at^x}\wedge e^{bt^y} \wedge \gamma)=\underline{L}_{m,r}((\alpha \otimes \gamma)|_{t^m})=\ell i _{m,r}(\alpha|_{t^m}) \frac{d\gamma}{\gamma}.
\end{eqnarray*}
Since $\delta _r(\alpha)=e^{at^x}\wedge e^{bt^y}$
$$
\ell i _{m,r}(\alpha|_{t_m})=\Big(\Lambda ^{2} \log^{\circ} (\delta_r(\alpha))| \sum _{1\leq i \leq r-m} it^{r-i}\wedge t^i\Big)
=yab.
$$
This exactly coincides with the expression in the statement of the proposition. 

(iii) Case when $y\neq 0$ and $z \neq 0.$ Note that, by localizing, we may assume that $R$ is local. Moreover, since both sides of the expression are linear in $a, \, b$ and $c,$ we may assume without loss of generality that $a, \,b, \, c \in R^{\times}.$  Since any element in a local ring can be written as a sum of  units. 

If $\theta:=e^{at^x}\wedge e^{
bt^y}$ then its image in $\oplus _{1 \leq i \leq r-1} t^i\Omega^1 _{R}$ is equal to 
$$
 \oplus _{1 \leq i \leq r-1} res_{t}\frac{1}{t^i}(\Lambda ^ {2} d \log (e^{at^x}\wedge e^{bt^y})),
$$
which only has a non-zero component in degree $x+y$ equal to $yb\cdot da- xa \cdot db.$ 

If we compute the image of $\varphi:= \frac{x}{x+y}e^{abt^{x+y}}\wedge b -\frac{y}{x+y}e^{ab t^{x+y}}\wedge a$ in the same group, we obtain the same element. Therefore  $\theta -\varphi$  lies in the image of $B_{2}(R_{r}).$ Suppose that $\gamma_{0} \in B_{2}(R_{r})$ such that $\delta (\gamma_{0})=\theta -\varphi.$  Since $e^{abt^{x+y}}\wedge e^{ct^z}$ has weight $r$, there is $\varepsilon_{0} \in B_{2} (R_{r}) $ such that $\delta(\varepsilon_{0})=e^{abt^{x+y}}\wedge e^{ct^z}.$ 

We now write 
$$
e^{at^x}\wedge e^{bt^y}\wedge e^{ct^z}=(\theta -\varphi)\wedge e^{ct^z}+ \varphi \wedge e^{ct^z}=\delta (\gamma_{0} \otimes e^{ct^z})-\frac{x}{x+y}\delta(\varepsilon_0 \otimes b) +\frac{y}{x+y} \delta(\varepsilon_0 \otimes a).
$$
 By the definition of $\Omega_{m,r},$ we have $\Omega_{m,r}(e^{at^x}\wedge e^{bt^y}\wedge e^{ct^z})=$
 $$
\underline{L}_{m,r} (\gamma_{0}|_{t^m} \otimes e^{ct^z})-\frac{x}{x+y}\underline{L}_{m,r}(\varepsilon_0|_{t^m} \otimes b) +\frac{y}{x+y} \underline{L}_{m,r}(\varepsilon_0|_{t^m} \otimes a).
 $$ 
 By definition, $\underline{L}_{m,r}(\varepsilon_0|_{t^m} \otimes b)=$
 $$\ell i _{m,r}(\varepsilon_0|_{t^m} ) d \log (b)=(\Lambda ^{2} \log ^{\circ}\delta (\varepsilon_{0})|\sum _{m \leq i<r} (r-i)t^i\wedge t^{r-i} )d \log (b)=zabc \cdot d\log (b)=azc\cdot db.
 $$
By the same argument,
$\underline{L}_{m,r}(\varepsilon_0|_{t^m} \otimes a)=bzc \cdot da.$ 
 
In order to compute $\underline{L}_{m,r} (\gamma_{0}|_{t^m} \otimes e^{ct^z}),$ first note that, by the definition of $L_{m,r},$ we have 
$$
\underline{L}_{m,r} (\gamma_{0}|_{t^m} \otimes e^{ct^z})=-\frac{z}{x+y}d\ell i_{m,x+y} (\gamma_0|_{t^m})\cdot c+\ell i_{m,x+y} (\gamma_0|_{t^m}) \cdot d c.
$$
Since $\ell i_{m,x+y} (\gamma_0|_{t^m})=(\Lambda ^{2} \log ^{\circ}\delta (\gamma_{0})|\sum _{m \leq i<x+y} (x+y-i)t^i\wedge t^{x+y-i} )=yab,$ we have 
$$
\underline{L}_{m,r} (\gamma_{0}|_{t^m} \otimes e^{ct^z})=-\frac{zy}{x+y}c \cdot d(ab)+yab \cdot dc.
$$
Combining all of these gives, $\Omega_{m,r}(e^{at^x}\wedge e^{bt^y}\wedge e^{ct^z})=$
$$
-\frac{zy}{x+y}c \cdot d(ab)+yab \cdot dc-\frac{xzac}{x+y} \cdot db +\frac{yzbc}{x+y}da=a(yb\cdot dc-zc\cdot db).
$$
This finishes the proof of the proposition. 
\end{proof}

\begin{definition}\label{defn om}
We define $\omega_{m,r}: \Lambda ^{3}(R_{r},(t^m))^{\times}\to \Omega^{1} _{R/k}$ as the composition $\Omega_{m,r} \circ s$ of $s:\Lambda ^{3}(R_{r},(t^m))^{\times} \to I_{m,r},$ and $\Omega_{m,r}: I_{m,r} \to \Omega^1 _{R/k}.$
\end{definition}

\subsection{Behaviour of $\omega_{m,r}$ with respect to automorphisms of $R_{2m-1}$ which are identity modulo $(t^m)$}

In this section, we will  show the  invariance of $\Omega_{m,r}$ with respect to reparametrizations of $R_r$ that are identity on the reduction to $R_m.$ In order to do this, we will need to make an explicit computation on $k'((s))_{\infty},$ where $k'$ is a finite extension on $k.$  In order to make the formulas concise and intuitive, we will use several notational conventions as follows. If $a \in k'((s)),$ we let $a'=a^{(1)} 
\in k'((s))$ denote its derivative with respect to $s$ and $a^{(n+1)}=(a^{(n)})'.$ Similarly, we let $e^a$ denote an arbitrary {\it non-zero} element in $k'((s)) $ and $a'=a^{(1)}:=\frac{(e^a)'}{e^a}$ and  $a^{(n+1)}=(a^{(n)})'.$ This  notation is   intuitive in the sense that, if one thinks of $a$ as $\log (e^a)$ then $a'$ is the logarithmic derivative of $e^a.$ With these conventions, we will state the following basic lemma. 

\begin{lemma}
Let $\sigma$ be the automorphism of the $k_{\infty}$ algebra $k'((s))_{\infty},$ which is the identity map modulo $(t)$ and has the property that  $\sigma(s)=s+\alpha t^w,$ with $w \geq 1$ and $\alpha\in k((s)),$ then $\sigma(e^{at^x})=e^{\sum_{0\leq i}\frac{\alpha^i
a^{(i)}}{i!}t^{x+iw}}.$
 \end{lemma}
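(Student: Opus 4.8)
The statement is the Taylor expansion formula for how $e^{at^x}$ transforms under the substitution $s \mapsto s + \alpha t^w$, so the natural approach is a direct computation followed by collecting terms by powers of $t$. First I would observe that $\sigma$ is a $k_\infty$-algebra automorphism, so it is continuous for the $(t)$-adic topology and commutes with the logarithm and exponential on the appropriate subgroups; hence it suffices to compute $\sigma(at^x) = \sigma(a)\,t^x$, where on the left $a$ is interpreted as $\log(e^a) \in k'((s))$ extended $t$-adically. Since $\sigma$ fixes $t$ and acts on coefficients in $k'((s))$ by the substitution $s \mapsto s + \alpha t^w$, the heart of the matter is the one-variable Taylor expansion
\[
\sigma(a) \;=\; a(s + \alpha t^w) \;=\; \sum_{0 \le i} \frac{a^{(i)}}{i!}\,(\alpha t^w)^i \;=\; \sum_{0 \le i} \frac{\alpha^i a^{(i)}}{i!}\, t^{iw},
\]
where I must be slightly careful that $\alpha$ itself depends on $s$; but the point is that $\sigma$ is defined precisely so that its effect on any element of $k'((s))$, viewed inside $k'((s))_\infty$, is substitution of the single power series $s + \alpha t^w$ for $s$, and then the formal Taylor formula applies verbatim because $k'((s))$ is a $\mathbb{Q}$-algebra and $\alpha t^w$ is topologically nilpotent. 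Multiplying through by $t^x$ gives $\sigma(at^x) = \sum_{0\le i} \frac{\alpha^i a^{(i)}}{i!} t^{x+iw}$, and then exponentiating (using that $\sigma$ commutes with $\exp$ on $t\cdot k'((s))_\infty$ when $x \ge 1$, and more generally that $\sigma(e^{at^x})$ is determined by $\sigma$ applied to the logarithmic class) yields the claimed identity.

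The one genuine subtlety — and the step I expect to be the main obstacle to write cleanly — is justifying the Taylor expansion formula $a(s+\alpha t^w) = \sum_i \frac{\alpha^i a^{(i)}}{i!} t^{iw}$ when $a \in k'((s))$ has a pole in $s$ and when $\alpha$ is itself a nonconstant element of $k'((s))$, i.e. making precise in what sense ``$\sigma$ substitutes $s + \alpha t^w$ for $s$.'' The clean way to handle this is: (1) verify the formula for $a = s$ directly, where it reads $\sigma(s) = s + \alpha t^w$ (the hypothesis) and the right-hand side of the lemma's formula collapses since $s^{(i)} = 0$ for $i \ge 2$ and $s^{(1)} = 1$; (2) verify it for $a = s^{-1}$ using the identity $\sigma(s^{-1}) = \sigma(s)^{-1} = (s + \alpha t^w)^{-1} = s^{-1}\sum_{j\ge 0}(-\alpha s^{-1} t^w)^j$ and matching this against $\sum_i \frac{\alpha^i}{i!}(s^{-1})^{(i)} t^{iw} = \sum_i \frac{\alpha^i}{i!}\cdot \frac{(-1)^i i!}{s^{i+1}} t^{iw}$, which agree term by term (here $\alpha$ is treated as a constant relative to the outer $s$-derivatives because $\sigma$ is a ring homomorphism and $\alpha t^w$ is a fixed element — this is the point requiring care, and is correct because $\sigma$ is multiplicative); (3) extend to all of $k'((s))$ by multiplicativity and $\mathbb{Q}$-linearity, noting both sides are ring homomorphisms in $a$ in the sense that the $t$-expansion coefficients satisfy the Leibniz rule, and by $(t)$-adic continuity.

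With the coefficient-substitution formula in hand, the remaining steps are routine: since $\sigma$ fixes $t$ and is a ring homomorphism, $\sigma(at^x) = \sigma(a) t^x$, giving $\sigma(at^x) = \sum_{0 \le i} \frac{\alpha^i a^{(i)}}{i!} t^{x + iw}$; then applying $\exp$ and using that $\sigma \circ \exp = \exp \circ \,\sigma$ on the relevant group of principal units (valid since $x \ge 1$ ensures $at^x$ lies in $t \cdot k'((s))_\infty$, or more precisely since the element $e^{at^x}$ and its $\sigma$-image both lie in $1 + t\cdot k'((s))_\infty$ and $\sigma$ respects the log/exp correspondence there) yields
\[
\sigma(e^{at^x}) \;=\; e^{\sigma(at^x)} \;=\; e^{\sum_{0 \le i} \frac{\alpha^i a^{(i)}}{i!} t^{x + iw}},
\]
which is exactly the assertion. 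I would also remark that the notational conventions introduced just before the lemma (writing $a^{(n)}$ for iterated logarithmic derivatives of $e^a$) make the statement for a general nonzero $e^a$ — as opposed to $a$ literally a power series — follow from the case of honest power series $a = \log(e^a)$, since $\log$ of a unit in $k'((s))$ need not converge, but the formula only involves $a^{(1)} = (e^a)'/(e^a)$ and its iterates, which are genuine elements of $k'((s))$, so one argues with these logarithmic derivatives throughout rather than with $a$ itself.
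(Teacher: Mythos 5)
Your proposal is correct and follows essentially the same route as the paper, which likewise reduces the lemma to the formal Taylor expansion formula (treating the cases $x=0$ and $x\neq 0$ separately) — your write-up just supplies the verification on $s$ and $s^{-1}$, the Leibniz/multiplicativity check, and the log/exp bookkeeping that the paper leaves implicit. The only ingredient the paper makes explicit that you elide is the preliminary observation that $\sigma$ is automatically the identity on $k'$ (since $k'/k$ is \'etale and $\sigma$ is the identity modulo $(t)$), which is what justifies saying that $\sigma$ acts on coefficients purely by the substitution $s\mapsto s+\alpha t^w$.
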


\begin{proof}
First note that such an automorphism should be identity on $k'$ since $k'/k$ is \'{e}tale, the map is identity on $k$ and is also identity from $k'$ to $k'((s))_\infty/(t)=k'((s)).$  Therefore, $\sigma$ is an automorphism of the $k'_{\infty}$-algebra $k'((s))_{\infty}.$  
 The proof is then separated into two cases, when $x=0$ and when $x\neq 0.$ In both cases, the statement follows from the Taylor expansion formula. 
\end{proof}

\begin{lemma}\label{easymainlemma}
 Let $\sigma$ be the automorphism of the $k_{\infty}$ algebra $k'((s))_{\infty}$ given by $\sigma(s)=s+\alpha t^w,$  with $m \leq w$ and identity modulo $(t).$    Then we have,
$$
\Omega_{m,r} \big(\frac{\sigma(e^{at^i}\wedge e^{bt^j}\wedge e^{ct^k} )}{e^{at^i}\wedge e^{bt^j}\wedge e^{ct^k}}\big)=0,
$$
for $0<i+j+k.$ 
\end{lemma}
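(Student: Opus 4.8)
The plan is to reduce the statement to an explicit computation using the formula for $\Omega_{m,r}$ provided by Proposition \ref{formula Omega}, and the explicit description of $\sigma$ given by the previous lemma. First I would expand $\sigma(e^{at^i}\wedge e^{bt^j}\wedge e^{ct^k})$ using $\sigma(e^{at^x})=e^{\sum_{0\le \nu}\frac{\alpha^\nu a^{(\nu)}}{\nu!}t^{x+\nu w}}$, so that the quotient $\sigma(e^{at^i}\wedge e^{bt^j}\wedge e^{ct^k})/(e^{at^i}\wedge e^{bt^j}\wedge e^{ct^k})$ becomes, after using multilinearity of $\wedge$, a sum of wedges $e^{\alpha^{\nu_1}a^{(\nu_1)}/\nu_1!\, t^{i+\nu_1 w}}\wedge e^{\alpha^{\nu_2}b^{(\nu_2)}/\nu_2!\, t^{j+\nu_2 w}}\wedge e^{\alpha^{\nu_3}c^{(\nu_3)}/\nu_3!\, t^{k+\nu_3 w}}$ with at least one of $\nu_1,\nu_2,\nu_3$ positive (the all-zero term being exactly the denominator, which cancels). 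Each such wedge has $t$-degree $i+j+k+(\nu_1+\nu_2+\nu_3)w$, and since it must lie in $I_{m,r}\subseteq(\Lambda^3 R_r^\times)^\circ$ only degrees strictly less than $r$ contribute; in particular, since $w\ge m$, any surviving term must have $\nu_1+\nu_2+\nu_3=1$, i.e. exactly one exponent is bumped by $w$.

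So the quotient reduces modulo $(t^r)$ to a sum of three families of terms, one for each choice of which of the three factors receives the $\alpha\,(\cdot)'\,t^{w}$ correction. For each surviving wedge I would check the hypothesis $x\ge m$ of Proposition \ref{formula Omega}: the bumped factor has degree $\ge w\ge m$, so after reordering the wedge so that this factor comes first, the proposition applies directly and gives an explicit one-form. Concretely, the term coming from bumping the first factor is $e^{\alpha a' t^{i+w}}\wedge e^{bt^j}\wedge e^{ct^k}$ (if $i+w<r$), whose $\Omega_{m,r}$-image is $\alpha a'(jb\cdot dc - kc\cdot db)$; the term from bumping the second factor, after a transposition introducing a sign, is $-\,e^{\alpha b' t^{j+w}}\wedge e^{at^i}\wedge e^{ct^k}$, whose image is $-\alpha b'(ia\cdot dc - kc\cdot da)$; and similarly the third gives $\alpha c'(ia\cdot db - jb\cdot da)$ (up to the bookkeeping of which degrees actually are $<r$, and with the usual convention that a factor of exponent $0$ contributes a $d\log$ and its "coefficient times degree" term vanishes).

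The final step is to add these three contributions and observe that the sum telescopes to zero. This is the routine but slightly delicate calculation: one has $\alpha a'(jb\,dc-kc\,db) - \alpha b'(ia\,dc-kc\,da) + \alpha c'(ia\,db-jb\,da)$, and grouping by $dc$, $db$, $da$ gives coefficients $\alpha(ja'b - ib'a)$, $\alpha(kc'a - ka'c)$ — wait, one must be careful — the correct grouping produces, for each of $da,db,dc$, a coefficient that is visibly antisymmetric and cancels in pairs once one also uses that all three terms carry the same overall factor $\alpha$; I would carry this out coefficient by coefficient. There is also an edge case to handle separately: when one of $i,j,k$ is itself $0$, the corresponding $e^{(\cdot)t^0}$ is an arbitrary unit and the conventions in Proposition \ref{formula Omega} ($yb$ means $0$, $db$ means $d\log$) must be applied; and when bumping a degree-$0$ factor its new degree is $w$ which may or may not be $<r$. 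These cases only make the expressions shorter, and the same cancellation goes through.

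The main obstacle I anticipate is purely organizational rather than conceptual: correctly tracking which of the bumped wedges actually survive the truncation modulo $(t^r)$ (i.e. have total degree $<r$) in all the sub-cases determined by which of $i,j,k$ vanish and whether $i+w$, $j+w$, $k+w$ fall below $r$, and keeping the signs from reordering wedges consistent, so that the three explicit one-forms supplied by Proposition \ref{formula Omega} really do sum to zero. Once the bookkeeping is set up, the vanishing is forced by the antisymmetry of the Leibniz-type expression $a'(jb\,dc-kc\,db)$ under cyclic relabeling together with the shared factor $\alpha$.
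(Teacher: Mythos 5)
Your proposal follows the paper's proof essentially verbatim: expand $\sigma$ via the Taylor-expansion lemma, observe that since $w\geq m$, $0<i+j+k$ and $r<2m$ only the single-bump terms with $i+j+k+w=r$ can contribute (the paper phrases this as selecting the weight-$r$ component, which is the cleaner justification than truncation below degree $r$), and then apply Proposition \ref{formula Omega} to the three resulting wedges, whose sum $\alpha\bigl(a'(jbc'-kcb')-b'(iac'-kca')+c'(iab'-jba')\bigr)ds$ vanishes by antisymmetry. This is exactly the argument in the paper.
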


\begin{proof} Since $m\leq w,$ $0<i+j+k$ and $m<r<2m,$  
the weight $r$ terms of $\frac{\sigma(e^{at^i}\wedge e^{bt^j}\wedge e^{ct^k} )}{e^{at^i}\wedge e^{bt^j}\wedge e^{ct^k}}$ are possibly non-zero only when   $i+j+k+w = r$ and in this case they are given by 
$$
e^{\alpha a't^{i+w}}\wedge e^{bt^j}\wedge e^{ct^k}+e^{at^{i}}\wedge e^{ \alpha b't^{j+w}}\wedge e^{ct^k}+e^{at^{i}}\wedge e^{bt^j}\wedge e^{\alpha c't^{k+w}}.
$$
By the formula in Proposition \ref{formula Omega}, the above sum is sent to 
$$
\alpha (a' (jbc'-kc b')-b'(iac'-kc a')+c'(iab'-jba'))ds=0.
$$
\end{proof}

\begin{corollary}\label{cor semi path}
Let $\sigma$ be any automorphism of $R_r$ as a $k_r$-algebra, which reduces to identity on $R_m,$ then $\omega_{m,r}\circ \Lambda^3\sigma=\omega_{m,r}.$
\end{corollary}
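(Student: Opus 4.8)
The plan is to reduce Corollary \ref{cor semi path} to Lemma \ref{easymainlemma} by a localization-and-factorization argument, and then to pass from $\Omega_{m,r}$ to $\omega_{m,r}$ using the definition $\omega_{m,r}=\Omega_{m,r}\circ s$. First I would observe that, since $\omega_{m,r}$ and $\Lambda^3\sigma$ are defined on $\Lambda^3(R_r,(t^m))^\times$ and the target $\Omega^1_{\underline{R}/k}$ is a module over the local ring $\underline R$, it suffices to check the identity after localizing, and moreover it suffices to check it on generators of the form $(e^{\varepsilon_1}a_1,a_1)\wedge(e^{\varepsilon_2}a_2,a_2)\wedge(e^{\varepsilon_3}a_3,a_3)$ with $\varepsilon_i\in t^mR_r$ and $a_i\in R_r^\times$; applying $s$ to such an element lands in $I_{m,r}$, so everything is governed by $\Omega_{m,r}$. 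Because $\sigma$ reduces to the identity modulo $(t^m)$, I can write $\sigma$ as a (possibly infinite, but $t$-adically convergent, hence finite modulo $t^r$) composition of elementary automorphisms $\sigma_w$ with $\sigma_w(s)=s+\alpha t^w$ for $w\geq m$; it therefore suffices to treat a single such $\sigma_w$.

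The key computation is then to show $\Omega_{m,r}(\sigma(\theta))=\Omega_{m,r}(\theta)$, equivalently $\Omega_{m,r}\bigl(\tfrac{\sigma(\theta)}{\theta}\bigr)=0$, for $\theta$ a wedge of three units in $R_r$. Writing units in the ``exponential'' form and using that $R$ is local so that every unit is a product $e^{\,\text{stuff}\,}\cdot(\text{constant unit})$, I reduce by multilinearity to basic wedges $e^{at^i}\wedge e^{bt^j}\wedge e^{ct^k}$. For these the statement is precisely Lemma \ref{easymainlemma} when $0<i+j+k$, where the explicit formula of Proposition \ref{formula Omega} makes the three contributing terms (after applying $\sigma$ and extracting the weight-$r$ part) cancel identically. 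The remaining case is $i=j=k=0$, i.e. $\theta$ a wedge of three elements of $R^\times$; but then $\theta$ already lies in $\Lambda^3R^\times$, on which $\sigma$ acts trivially (as $\sigma$ is a $k_r$-algebra map that is the identity on $R\subseteq R_m$), so there is nothing to prove. Actually I must be slightly careful: $\sigma$ need not fix $R^\times\subseteq R_r^\times$ pointwise if $R$ is not generated by constants, but it does fix the image of $R$ in $R_m$; since $\Omega_{m,r}$ only depends on the classes modulo $t^m$ through the construction via $L_{m,r}$, and $\sigma$ is the identity modulo $t^m$, the contribution of any wedge involving only genuine elements of $R^\times$ is unchanged — this is the content I would spell out carefully.

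The main obstacle, and the step that requires genuine care rather than routine bookkeeping, is the factorization of an arbitrary $\sigma$ into elementary pieces $\sigma_w$ together with the verification that the reduction to basic wedges $e^{at^i}\wedge e^{bt^j}\wedge e^{ct^k}$ is legitimate: one must check that after localizing and writing each unit as a product of an exponential and a constant unit, the bilinear expansion only produces wedges to which either Lemma \ref{easymainlemma} or the trivial $R^\times$-case applies, and that the passage through $s:\Lambda^3(R_r,(t^m))^\times\to I_{m,r}$ commutes with $\Lambda^3\sigma$ (it does, since $\sigma$ acts componentwise on $(R_r,(t^m))^\times$ and commutes with both projections $\pi_1,\pi_2$). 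Once these compatibilities are in place, the corollary follows: for any generator $\xi$ of $\Lambda^3(R_r,(t^m))^\times$ we get $\omega_{m,r}(\Lambda^3\sigma(\xi))=\Omega_{m,r}(s(\Lambda^3\sigma(\xi)))=\Omega_{m,r}(\Lambda^3\sigma(s(\xi)))=\Omega_{m,r}(s(\xi))=\omega_{m,r}(\xi)$, where the third equality is the elementary-automorphism case reduced to Lemma \ref{easymainlemma}.
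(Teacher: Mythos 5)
Your route is the paper's own: pass from $\omega_{m,r}$ to $\Omega_{m,r}$ via $\omega_{m,r}=\Omega_{m,r}\circ s$ together with the (correct) observation that $s$ commutes with $\Lambda^3\sigma$, localize and complete to $k'((s))_r$, factor $\sigma$ into elementary automorphisms $\sigma(s)=s+\alpha t^w$ with $w\geq m$, and reduce by multilinearity to Lemma \ref{easymainlemma}. The paper's proof is exactly this (stated in two sentences), so the extra care you take with the factorization and with the compatibility of $s$ and $\Lambda^3\sigma$ is filling in, not deviating. One small imprecision: $\Omega_{m,r}(\theta)$ and $\Omega_{m,r}(\sigma(\theta))$ are not separately defined for a general wedge of units $\theta\notin I_{m,r}$; only the ratio $\sigma(\theta)/\theta$, which does lie in $I_{m,r}$, can be fed to $\Omega_{m,r}$.

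The one step whose justification is wrong is your treatment of the case $i=j=k=0$. You correctly retract the claim that $\sigma$ fixes $R^\times$ pointwise, but your replacement --- that ``$\Omega_{m,r}$ only depends on the classes modulo $t^m$'' --- is false: by Proposition \ref{formula Omega}, $\Omega_{m,r}(e^{at^m}\wedge e^{bt^{r-m}}\wedge c)=a(r-m)b\,d\log(c)$, which is nonzero in general even though $e^{at^m}\equiv 1 \pmod{t^m}$; so $\Omega_{m,r}$ sees strictly more than reductions mod $t^m$. The conclusion you need is nonetheless true and cheap: for $\theta=e^{a}\wedge e^{b}\wedge e^{c}$ of weight zero, each ratio $\sigma(e^{x})/e^{x}$ lies in $1+(t^{w})$ with $w\geq m$, so the weight-$r$ part of $\sigma(\theta)/\theta$ consists only of terms of the form $e^{\ast t^{r}}\wedge(\mathrm{unit})\wedge(\mathrm{unit})$ (any term carrying two ratios has weight at least $2m>r$), and these are killed by the $y=z=0$ case of Proposition \ref{formula Omega}; in other words the computation proving Lemma \ref{easymainlemma} goes through verbatim when $i+j+k=0$. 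Alternatively you can avoid this case entirely by expanding $\sigma(p)/p$ for $p=a_{1}\wedge a_{2}\wedge a_{3}$ as the sum of the three wedges each containing exactly one factor $\sigma(a_{i})/a_{i}\in 1+(t^{m})$: then every basic wedge in the subsequent multilinear expansion has one exponent of weight at least $m$, and Lemma \ref{easymainlemma} applies as stated.
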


\begin{proof}
This follows by the corresponding statement for $\Omega_{m,r}.$ This in turn reduces to Lemma \ref{easymainlemma} after localizing and completing. 
 \end{proof}

\begin{definition}\label{defn omega path}
If  $\pazocal{R}/k_{r}$ is a smooth  $k_{r}$-algebra of relative dimension 1. 
We defined the  map  
$$
\omega_{m,r}:\Lambda^3 (\underline{\pazocal{R}}_{r},(t^m) )^{\times} \to \Omega^{1} _{\underline{\pazocal{R}}/k},
$$
as the composition $\Omega_{m,r}\circ s,$ where $\underline{\pazocal{R}}$ is the reduction of $\pazocal{R}$ modulo $(t)$ and $\underline{\pazocal{R}}_{r}:=\underline{\pazocal{R}} \times _k k_{r}.$   
Let $\tau: \underline{\pazocal{R}}_{r} \to \pazocal{R}$ be a splitting, that is an isomorphism of $k_{r}$-algebras which is the  identity map modulo $(t).$ By transport of structure, this gives a map $$
\omega_{m,r,\tau}:\Lambda^3 (\pazocal{R},(t^m) )^{\times} \to \Omega^{1} _{\underline{\pazocal{R}}/k}.
$$
Suppose that $\tau '$ is  another such  splitting which agrees with $\tau$ modulo $(t^m).$  Applying Corollary \ref{cor semi path}  to ${\tau '} ^{-1}\circ\tau,$ we deduce that $\omega_{m,r,\tau}=\omega_{m,r,\tau'}.$
Therefore, if  $\sigma:\underline{\pazocal{R}}_{m} \to \pazocal{R}/(t^m) $ is a splitting of the reduction $\pazocal{R}/(t^m)$ of $\pazocal{R}$ then $\omega_{m,r,\sigma}$ is unambiguously defined as $\omega_{m,r,\tau},$ where $\tau$ is any splitting of $\pazocal{R}$ that reduces to $\sigma$ modulo $(t^m).$ 

\end{definition}

Recall the relative version of the Bloch group from \cite[\textsection 2.4.8]{unv3}. If $A$ is a ring with ideal $I, $ let $(A,I)^{\flat}:= \{ (\tilde{a},\hat{a}) \in (A,I)^{\times}| (1-\tilde{a},1-\hat{a}) \in (A,I)^{\times}\}.$ Then  the relative Bloch group $B_{2}(A,I)$ is defined as the abelian group  generated by the symbols $[ (\tilde{a},\hat{a})]$ for every $(\tilde{a},\hat{a}) \in (A,I)^{\flat},$ modulo the relations generated by the analog of the five term relation for the dilogarithm: 
$$
 [(\tilde{x},\hat{x})]-[(\tilde{y},\hat{y})]+[ (\tilde{y}/\tilde{x},\hat{y}/\hat{x})]-[(\frac{1-\tilde{x}^{-1}}{1-\tilde{y}^{-1}},\frac{1-\hat{x}^{-1}}{1-\hat{y}^{-1}})]+[(\frac{1-\tilde{x}}{1-\tilde{y}},\frac{1-\hat{x}}{1-\hat{y}})]
$$
for every $(\tilde{x},\hat{x}),(\tilde{y},\hat{y}) \in (A,I)^{\flat}$ such that $(\tilde{x}-\tilde{y},\hat{x}-\hat{y})\in (A,I)^{\times}.$ As in the classical case, we obtain a complex 
$
\delta: B_{2} (A,I) \to \Lambda^{2} (A,I)^{\times},
$
which sends $(\tilde{a},\hat{a})$ to $(1-\tilde{a},1-\hat{a})\wedge (\tilde{a},\hat{a}).$  As usual, abusing the notation,  we will denote the induced map $\delta\otimes id:
 B_{2} (A,I) \otimes (A,I)^{\times} \to \Lambda^{3} (A,I)^{\times},
$
also by $\delta.$ With these definitions,  we have the following expected property of the map $\omega_{m,r,\tau}.$ 
\begin{proposition}\label{value of omega on a boundary}
For a splitting $\sigma$ of $\pazocal{R}/(t^m),$ the above map $\omega_{m,r,\sigma}$ vanishes on the image of $B_{2}(\pazocal{R},(t^m)) \otimes (\pazocal{R},(t^m))^{\times}$ in $\Lambda^3 (\pazocal{R},(t^m) )^{\times}$ under $\delta.$ 
\end{proposition}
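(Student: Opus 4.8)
The plan is to reduce the statement to a computation on the split model $\underline{\pazocal{R}}_r$, where $\omega_{m,r,\sigma}$ becomes the composite $\Omega_{m,r}\circ s$ with $s=\Lambda^3\pi_1-\Lambda^3\pi_2$, and then to trace through how $s$ interacts with $\delta$. Recall that by Definition \ref{defn omega path} the map $\omega_{m,r,\sigma}$ does not depend on the chosen splitting $\tau$ of $\pazocal{R}$ refining $\sigma$; so after fixing such a $\tau$ and transporting structure we may assume $\pazocal{R}=\underline{\pazocal{R}}_r$, that $(t^m)$ is the honest ideal $t^m\underline{\pazocal{R}}_r$, and that $(\pazocal{R},(t^m))^\times$ is the group $(\underline{\pazocal{R}}_r, t^m\underline{\pazocal{R}}_r)^\times$ in the notation of \textsection 5. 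Under this identification the relative Bloch group $B_2(\pazocal{R},(t^m))$ is $B_2(\underline{\pazocal{R}}_r, t^m\underline{\pazocal{R}}_r)$, and I want to show $\Omega_{m,r}\circ s\circ\delta=0$ on $B_2(\underline{\pazocal{R}}_r, t^m\underline{\pazocal{R}}_r)\otimes (\underline{\pazocal{R}}_r, t^m\underline{\pazocal{R}}_r)^\times$.

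First I would unwind $s\circ\delta$ on a generator $[(\tilde a,\hat a)]\otimes(\tilde y,\hat y)$. We have $\delta([(\tilde a,\hat a)]\otimes(\tilde y,\hat y)) = (1-\tilde a,1-\hat a)\wedge(\tilde a,\hat a)\wedge(\tilde y,\hat y)$, so applying $s=\Lambda^3\pi_1-\Lambda^3\pi_2$ gives
$$
(1-\tilde a)\wedge\tilde a\wedge\tilde y - (1-\hat a)\wedge\hat a\wedge\hat y
= \delta_r([\tilde a]\otimes\tilde y) - \delta_r([\hat a]\otimes\hat y)
$$
inside $\Lambda^3\underline{\pazocal{R}}_r^\times$, an element lying in $I_{m,r}$ since $\tilde a/\hat a, \tilde y/\hat y\in 1+t^m\underline{\pazocal{R}}_r$. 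The key point is then that both $[\tilde a]\otimes\tilde y$ and $[\hat a]\otimes\hat y$ are themselves elements of $(B_2(\underline{\pazocal{R}}_r)\otimes\underline{\pazocal{R}}_r^\times)^\circ$ plus a "constant" piece, and $\Omega_{m,r}$ applied to the difference of their $\delta_r$-images is, by the very definition of $\Omega_{m,r}$ (the construction in \textsection 5.1, using Lemma \ref{redtok}), computed by $\underline{L}_{m,r}$ on a witness $\gamma\in(B_2(\underline{\pazocal{R}}_r)\otimes\underline{\pazocal{R}}_r^\times)^\circ$ with $\delta_r(\gamma)= s\delta([(\tilde a,\hat a)]\otimes(\tilde y,\hat y)) - \varepsilon$. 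Here $\gamma = [\tilde a]\otimes\tilde y - [\hat a]\otimes\hat y$ works directly (its reduction mod $(t)$ vanishes, so it lies in the infinitesimal part, and $\varepsilon=0$), so $\Omega_{m,r}(s\delta(\cdots)) = \underline{L}_{m,r}\big(([\tilde a]\otimes\tilde y - [\hat a]\otimes\hat y)|_{t^m}\big)$.

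Now the reductions mod $t^m$ of $\tilde a$ and $\hat a$ agree (call it $\bar a$) and likewise $\tilde y|_{t^m}=\hat y|_{t^m}=:\bar y$; hence $([\tilde a]\otimes\tilde y - [\hat a]\otimes\hat y)|_{t^m} = [\bar a]\otimes\bar y - [\bar a]\otimes\bar y = 0$ in $B_2(\underline{\pazocal{R}}_m)\otimes\underline{\pazocal{R}}_m^\times$, and since $\underline{L}_{m,r}$ factors through $B_2(R_m)\otimes R_m^\times$ by Proposition (the construction of $L_{m,r}$ and Lemma \ref{lemma-descent-to-m}), we conclude $\Omega_{m,r}(s\delta(\cdots))=0$. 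The main obstacle I anticipate is justifying that $[\tilde a]\otimes\tilde y$ really is an admissible witness in the sense required by the construction of $\Omega_{m,r}$ — i.e. that the difference $\delta_r([\tilde a]\otimes\tilde y)-\delta_r([\hat a]\otimes\hat y)$ genuinely lies in $I_{m,r}=\mathrm{im}\big((1+(t^m))\otimes\Lambda^2 R_r^\times\big)$ and that one may take $\varepsilon=0$ — and, relatedly, handling the general (non-split) $\pazocal{R}$: one must check that the splitting-independence in Definition \ref{defn omega path} is compatible with the relative five-term relations defining $B_2(\pazocal{R},(t^m))$, so that the vanishing descends from $\mathbb{Q}[(\pazocal{R},(t^m))^\flat]$ to the Bloch group. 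This last compatibility should follow formally because $\delta$ and $s$ are defined symbol-by-symbol and the five-term relator maps to a five-term relator under each $\pi_i$, but it needs to be stated carefully.
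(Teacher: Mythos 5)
Your proposal is correct and follows essentially the same route as the paper: reduce to the split case via Definition \ref{defn omega path}, observe that $s\circ\delta$ on a generator is $\delta_r([\tilde a]\otimes\tilde y)-\delta_r([\hat a]\otimes\hat y)$ with the difference itself serving as the witness $\gamma$ (so $\varepsilon=0$), and conclude because $\underline{L}_{m,r}$ only depends on the reduction modulo $(t^m)$, where the two symbols coincide. The paper merely splits the difference into two steps (changing $f$, then $g$) rather than one, and your worry about the five-term relations is moot since vanishing on the image under $\delta$ need only be checked on generators.
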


\begin{proof}
By the definition of $\omega_{m,r,\sigma},$ we easily reduce to the split case where $\pazocal{R}=R_{r}.$  We need to prove that $\Omega_{m,r}$ vanishes on the following two types of elements:
$$
\delta_r([\hat{f} ]\otimes \hat{g})-\delta_r([\tilde{f}]\otimes \hat{g})
\;\; \;{\rm and} \;\;\;
\delta_r([\hat{f}]\otimes \hat{g})-\delta_r([\hat{f}]\otimes \tilde{g}),
$$
where $\hat{f}, \, \tilde{f} \in \pazocal{R} ^{\flat}$ have the same reduction modulo $(t^m)$ and $\hat{g}, \, \tilde{g} \in \pazocal{R} ^{\times}$ have the same reduction modulo $(t^m).$ 
By the definition of $\Omega_{m,r},$ its value on the first and the second expressions are respectively: 
$$
\underline{L}_{m,r}(([\hat{f} ]\otimes \hat{g}-[\tilde{f} ]\otimes \hat{g})|_{t^m})=0,
$$
since $\hat{f}|_{t^m}=\tilde{f}|_{t^m}, $ and 
$$
\underline{L}_{m,r}(([\hat{f} ]\otimes \hat{g}-[\hat{f} ]\otimes \tilde{g})|_{t^m})=0,
$$
since $\hat{g}|_{t^m}=\tilde{g}|_{t^m}. $
\end{proof}

\subsection{Behaviour of $res(\omega_{m,r,\sigma})$ with respect to automorphisms of $R_{m}$} In order to proceed with our construction, we need an object such as the 1-form in \cite{unv3} which  controls the effect of changing splittings.   This object in $\star$-weight $r$ will be  constructed below by using  $\omega_{m,r}.$ On the other hand, this objects {\it does depend} on the choice of splittings if these splittings are different modulo $(t^{m}),$ when $r>m+1.$  In the modulus $m=2$ case the only possible  $r$ is 3 so this situation does not occur in \cite{unv3}. In the current case of higher modulus, we will see that  the  {\it residues} of the 1-form $\omega_{m,r} $  is invariant under the automorphisms of $R_{m}$ which are identity modulo $(t),$ which will imply that the residue can be defined independent of various choices.  We will see that this will be enough for constructing the Chow dilogarithm of higher modulus. We will again start with an explicit computation on $k'((s))_{\infty}.$ 

\begin{proposition}\label{main comp residue}
Suppose that $\sigma$ is the  automorphism of $k'((s))_{\infty}$ as a $k_{\infty}$-algebra such that  $\sigma(s)=s+\alpha t^w,$ with $w\geq 1$ and $\alpha \in k'((s)),$ and which is identity modulo $(t).$   Consider the element 
$
e^{at^x} \wedge e^{bt^y} \wedge e^{ct^z},
$ with $m \leq x.$ If $r-(x+y+z)>0,$ and is divisible by $w,$ let $q=\frac{r-(x+y+z)}{w}.$ Then 
$
\Omega_{m,r} \big(\frac{\sigma(e^{at^x}\wedge e^{bt^y}\wedge e^{ct^z} )}{e^{at^x}\wedge e^{bt^y}\wedge e^{ct^z}}\big)
$ is equal to 
\begin{eqnarray}\label{antider}
\;\;\;\;\;\;\;\;\;\;d\Big(\frac{\alpha^q}{q!}  \sum _{0 \leq k \leq q-1} a^{(k)} \binom{q-1}{k} \sum_{i+j=q-k}\big( \binom{q-k-1}{i} y b^{(i)}c^{(j)}- \binom{q-k-1}{j} z b^{(i)}c^{(j)} \big)  \Big)
 \end{eqnarray}
 Otherwise, $
\Omega_{m,r} \big(\frac{\sigma(e^{at^x}\wedge e^{bt^y}\wedge e^{ct^z} )}{e^{at^x}\wedge e^{bt^y}\wedge e^{ct^z}}\big)=0.
$ 
\end{proposition}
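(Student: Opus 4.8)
The plan is to reduce the whole computation to the explicit formula for $\Omega_{m,r}$ in Proposition \ref{formula Omega} by first expanding $\sigma(e^{at^x}\wedge e^{bt^y}\wedge e^{ct^z})$ in weight-graded pieces. By the lemma preceding Lemma \ref{easymainlemma}, $\sigma(e^{at^x}) = e^{\sum_{0\le i} \frac{\alpha^i a^{(i)}}{i!}t^{x+iw}}$, and similarly for $b$ and $c$. Wedging the three expansions together and collecting only the weight-$r$ part, the contributions come from triples of exponents $(x+iw, y+jw, z+kw)$ with $x+y+z+(i+j+k)w = r$; this already forces $r-(x+y+z)$ to be a nonnegative multiple of $w$, giving the vanishing in the ``otherwise'' case immediately. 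When $r-(x+y+z) = qw$ with $q\ge 0$, only triples with $i+j+k = q$ survive. First I would dispose of the $q=0$ case (where the ratio has no weight-$r$ part, or more precisely where the leading term is already the original element, so the ratio is trivial in the relevant range — here one must be slightly careful, but $m \le x$ and $m < r < 2m$ force $q \ge 1$ whenever the expression is nonzero, exactly as in the proof of Lemma \ref{easymainlemma}). So the bulk of the argument is the case $q \ge 1$.

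For $q\ge 1$, I would write the weight-$r$ term of $\frac{\sigma(\cdots)}{(\cdots)}$ as $\sum_{i+j+k=q} e^{\frac{\alpha^i a^{(i)}}{i!}t^{x+iw}} \wedge e^{\frac{\alpha^j b^{(j)}}{j!}t^{y+jw}} \wedge e^{\frac{\alpha^k c^{(k)}}{k!}t^{z+kw}}$ (the terms with $i=j=k=0$ cancel against the denominator; all surviving terms have first slot of weight $\ge x \ge m$, which is what is needed to apply Proposition \ref{formula Omega}). Since the first exponent $x+iw \ge m$ in every surviving summand, Proposition \ref{formula Omega} applies termwise and gives
\[
\Omega_{m,r}\Big(\tfrac{\sigma(\cdots)}{(\cdots)}\Big) = \sum_{i+j+k=q}\frac{\alpha^{i+j+k}}{i!\,j!\,k!}\,a^{(i)}\Big((y+jw)\,b^{(j)}\,d(c^{(k)}) - (z+kw)\,c^{(k)}\,d(b^{(j)})\Big).
\]
Now $d(c^{(k)}) = c^{(k+1)}ds$ and $d(b^{(j)}) = b^{(j+1)}ds$, so this is $\alpha^q ds$ times a sum of products of derivatives of $a$, $b$, $c$ and $ds$. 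The remaining task is purely formal: show this equals the $d(\cdots)$ in (\ref{antider}). I would do this by differentiating the claimed antiderivative
\[
F := \frac{\alpha^q}{q!}\sum_{0\le k\le q-1} a^{(k)}\binom{q-1}{k}\sum_{i+j=q-k}\Big(\binom{q-k-1}{i}y\,b^{(i)}c^{(j)} - \binom{q-k-1}{j}z\,b^{(i)}c^{(j)}\Big)
\]
and matching. This is where the combinatorial identities enter: differentiating $F$ produces (a) a term where the derivative hits $\alpha^q/q!$, which must cancel — this is exactly the content of Lemma \ref{easymainlemma} applied to the lower-weight piece $e^{at^x}\wedge e^{bt^y}\wedge e^{ct^z}$ with weights summing to $r-w$ rather than $r$ (so the ``$i+j+k$'' there is $q-1$), and (b) the terms where $d/ds$ hits one of $a^{(k)}, b^{(i)}, c^{(j)}$, which after reindexing using the Vandermonde/Pascal identities $\binom{q-1}{k} = \binom{q-2}{k-1}+\binom{q-2}{k}$ and $\binom{q-k-1}{i}+\binom{q-k-1}{i-1}=\binom{q-k}{i}$ must reproduce the multinomial coefficients $\frac{1}{i!j!k!}$ above (via $\frac{1}{i!j!k!} = \frac{1}{q!}\binom{q}{k}\binom{q-k}{i}$) together with the weight shifts $y\mapsto y+jw$, $z\mapsto z+kw$ coming from the $jw, kw$ corrections.

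The main obstacle is precisely this last bookkeeping: organizing the derivative of $F$ so that the ``$\alpha'$'' term cancels by the antisymmetric cancellation already verified in Lemma \ref{easymainlemma}, and so that the remaining binomial sums telescope to the multinomial expression. I expect to handle the cancellation of the $\alpha'$ term cleanly by citing Lemma \ref{easymainlemma} (it is exactly the weight-$(r-w)$ instance, whose $\Omega_{m,r}$-image on the analogous ratio vanishes), and then to verify the coefficient identity by a direct generating-function or induction-on-$q$ argument, using the binomial theorem to recognize $\sum_{i+j=q-k}\binom{q-k-1}{i}b^{(i)}c^{(j)}$ and its variants as coefficients in $(b+c)$-type expansions. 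Everything else — the weight-grading reduction, the termwise application of Proposition \ref{formula Omega}, and the passage from $d$ to $'$ — is routine once the expansions are written down carefully.
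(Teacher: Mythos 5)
Your overall strategy is the same as the paper's: expand $\sigma(e^{at^x})$ etc.\ via the Taylor lemma, isolate the weight-$r$ part of the ratio (which immediately gives the ``otherwise'' vanishing), apply Proposition \ref{formula Omega} termwise to the summands indexed by $i+j+k=q$, and match the result against the derivative of the claimed antiderivative. However, there is a genuine error in the central computation. When you apply Proposition \ref{formula Omega} to the term $e^{\frac{\alpha^k a^{(k)}}{k!}t^{x+kw}}\wedge e^{\frac{\alpha^i b^{(i)}}{i!}t^{y+iw}}\wedge e^{\frac{\alpha^j c^{(j)}}{j!}t^{z+jw}}$, the differential must act on the \emph{entire} coefficient $\frac{\alpha^j c^{(j)}}{j!}$ (resp.\ $\frac{\alpha^i b^{(i)}}{i!}$), not just on $c^{(j)}$; your displayed formula pulls $\alpha^{i+j+k}$ outside the $d$ and thereby discards all the $\alpha'\alpha^{q-1}$ contributions. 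Correspondingly, your claim that the term of $dF$ in which the derivative hits $\alpha^q/q!$ ``must cancel'' is false: for $q=1$ that term is $a(ybc'-zb'c)\,\alpha'\,ds$, which is not zero in general. Lemma \ref{easymainlemma} does not apply here --- it concerns the case $w\geq m$ and asserts total vanishing of $\Omega_{m,r}$ on the ratio via an antisymmetric cancellation among the three slots; it says nothing about the $\alpha'$-coefficient of the weight-$(r-w)$ configuration appearing in $F$.

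What actually happens, and what the paper verifies, is that the $\alpha'\alpha^{q-1}$ terms are nonzero on \emph{both} sides and are equal: the $\alpha'$-coefficient of the (correctly computed) $\Omega_{m,r}$-value is $\sum_{0\leq k\leq q-1}\frac{a^{(k)}}{k!}\sum_{i+j=q-k}\bigl(y\frac{b^{(i)}}{i!}\frac{jc^{(j)}}{j!}-z\frac{ib^{(i)}}{i!}\frac{c^{(j)}}{j!}\bigr)$, and this agrees with the $\alpha'$-coefficient of (\ref{antider}). So your two mistakes are ``compensating'' only in the sense that the final identity is still true; the proof as proposed does not establish it. One further point worth adopting from the paper: before matching coefficients, it shows that the apparent $w$-dependence coming from the shifts $y\mapsto y+iw$, $z\mapsto z+jw$ drops out (the coefficients of $w\cdot\frac{a^{(k)}}{k!}\alpha^{q-1}d\alpha$ and of $w\cdot\frac{a^{(k)}}{k!}\alpha^{q}ds$ each vanish by a telescoping cancellation). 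Your plan to absorb the $w$-shifts into the binomial bookkeeping is not developed, and eliminating $w$ first, as the paper does, makes the remaining coefficient comparison (of $\alpha'\alpha^{q-1}ds$ and of the monomials $\alpha^{q}a^{(k_0)}b^{(i_0)}c^{(j_0+1)}$, using $\frac{1}{k_0!i_0!j_0!}=\frac{1}{(k_0-1)!i_0!j_0!q}+\frac{1}{k_0!(i_0-1)!j_0!q}+\frac{1}{k_0!i_0!(j_0-1)!q}$) a clean finite check.
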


\begin{proof}
First note that by the above lemma   
$$R:=\frac{\sigma(e^{at^x}\wedge e^{bt^y}\wedge e^{ct^z} )}{e^{at^x}\wedge e^{bt^y}\wedge e^{ct^z}}=\frac{e^{\sum_{0\leq i}\frac{\alpha^i
a^{(i)}}{i!}t^{x+iw}}\wedge e^{\sum_{0\leq i}\frac{\alpha^i
b^{(i)}}{i!}t^{y+iw}} \wedge e^{\sum_{0\leq i}\frac{\alpha^i
c^{(i)}}{i!}t^{z+iw}}}{e^{at^x}\wedge e^{bt^y}\wedge e^{ct^z}}$$ and hence  if $r-(x+y+z)\leq 0$ or $w \nmid r-(x+y+z)$ then $R$ does not have a component of weight $r$ and  $\Omega_{m,r}(R)=0.$ 

Suppose then that $r-(x+y+z)>0,$  $w| (r-(x+y+z))$ and let $q:=\frac{r-(x+y+z)}{w}$ as in the statement of the proposition. In this case the weight $r$ term of $R$ is given as 
$$
\sum_{i+j+k=q}e^{\frac{\alpha^ka^{(k)}}{k!}t^{x+kw}}\wedge e^{\frac{\alpha^ib^{(i)}}{i!}t^{y+iw}}\wedge e^{\frac{\alpha^j c^{(j)}}{j!}t^{z+jw}}.
$$
This implies that $\Omega_{m,r}(R)$ is equal to 
$$
\sum_{i+j+k=q}\frac{\alpha^ka^{(k)}}{k!} \Big( (y+iw)\frac{\alpha^ib^{(i)}}{i!}\big(\frac{\alpha^j c^{(j)}}{j!}\big)'-(z+jw) \big(\frac{\alpha^ib^{(i)}}{i!} \big) ' \frac{\alpha^j c^{(j)}}{j!} \Big)ds.
$$
We first claim that the expression above does not depend on $w.$  The coefficient of $w \cdot \frac{a^{(k)}}{k!}\alpha^{q-1}d \alpha $ in this expression is $\sum _{i+j=q-k}( i \frac{b^{(i)}}{i!}\frac{jc^{(j)}}{j!}-j \frac{ib^{(i)}}{i!}\frac{c^{(j)}}{j!})=0.$ The coefficient of $w \cdot \frac{a^{(k)}}{k!} \alpha^q ds$ in the same expression is 
$$
\sum _{i+j=q-k}( i \frac{b^{(i)}}{i!}\frac{c^{(j+1)}}{j!}-j \frac{b^{(i+1)}}{i!}\frac{c^{(j)}}{j!})=\sum_{i+j=q-k+1 \atop {1 \leq i, \, j}} \frac{b^{(i)}}{(i-1)!} \frac{c^{(j)}}{(j-1)!}-\sum_{i+j=q-k+1 \atop {1 \leq i, \, j}} \frac{b^{(i)}}{(i-1)!} \frac{c^{(j)}}{(j-1)!}=0.
$$
Therefore $\Omega_{m,r}(R)$ can be rewritten as 
\begin{eqnarray}\label{simpleformula}
\sum_{i+j+k=q}\frac{\alpha^ka^{(k)}}{k!} \Big( y\frac{\alpha^ib^{(i)}}{i!}\big(\frac{\alpha^j c^{(j)}}{j!}\big)'-z \big(\frac{\alpha^ib^{(i)}}{i!} \big) ' \frac{\alpha^j c^{(j)}}{j!} \Big)ds.
\end{eqnarray}

The coefficient of $\alpha' \alpha^{q-1}ds$ in the above expression is equal to 
$$
\sum _{0 \leq k \leq q-1} \frac{a^{(k)}}{k!}  \sum_{i+j=q-k}\big(  y \frac{b^{(i)}}{i!}\frac{jc^{(j)}}{j!}- z \frac{ib^{(i)}}{i!}\frac{c^{(j)}}{j!} \big)
$$
which agrees with the coefficient of $\alpha' \alpha^{q-1}ds$ in (\ref{antider}).

Fix $i_{0},\, j_0,$ and $k_0$ such that $i_0+j_0+k_0=q.$ Then the coefficient of $y\alpha ^{q} a^{(k_0)}b^{(i_0)}c^{(j_{0}+1)}$ in (\ref{antider}) is equal to 
$
\frac{1}{q}(\frac{1}{(k_0-1)!}\frac{1}{i_0!}\frac{1}{j_0!}+\frac{1}{k_0!}\frac{1}{i_0!}\frac{1}{(j_0-1)!} +\frac{1}{k_0!}\frac{1}{(i_0-1)!}\frac{1}{j_0!})=\frac{1}{k_0!}\frac{1}{i_0!}\frac{1}{j_0!},
$
which is exactly the same as the coefficient of the same term in (\ref{simpleformula}).
By symmetry, we deduce the same statement for the coefficients of $z\alpha ^{q} a^{(k_0)}b^{(i_0+1)}c^{(j_{0})}.$ This finishes the proof of the proposition. 
\end{proof}

\begin{corollary}\label{omega m+1}
Suppose that $\sigma$ and $e^{at^x} \wedge e^{bt^y} \wedge e^{ct^z}$ are as above. If $r=m+1,$ then $
\Omega_{m,r} \big(\frac{\sigma(e^{at^x}\wedge e^{bt^y}\wedge e^{ct^z} )}{e^{at^x}\wedge e^{bt^y}\wedge e^{ct^z}}\big)=0.
$ 
 \end{corollary}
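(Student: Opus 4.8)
The plan is to read the result straight off Proposition \ref{main comp residue}, whose built-in dichotomy already does the work; the only content is a short arithmetic observation about the constraints imposed by $r=m+1$.

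First I would record the numerical constraint. By hypothesis $m\le x$, and $y,z\ge 0$, so $x+y+z\ge m$, hence
\[
r-(x+y+z)=(m+1)-(x+y+z)\le 1 .
\]
Thus the second alternative of Proposition \ref{main comp residue} (namely $r-(x+y+z)\le 0$, or $w\nmid r-(x+y+z)$) applies and gives $\Omega_{m,r}$ of the ratio equal to $0$ outright, \emph{unless} we are in the first alternative, which now forces $r-(x+y+z)=1$ exactly. That single equality pins everything down: $x=m$, $y=z=0$, and since $w\ge 1$ must divide $1$ we get $w=1$ and $q=\frac{r-(x+y+z)}{w}=1$.

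It then remains to treat the unique surviving case $q=1$, $y=z=0$. Here the antiderivative \eqref{antider} collapses: the outer sum keeps only $k=0$, and for $q-k=1$ the inner sum runs over $i+j=1$, every summand of which carries a factor $y$ or $z$. With $y=z=0$ this sum vanishes, so \eqref{antider} equals $d(0)=0$. (As a cross-check one can instead expand the weight-$r$ part of the ratio and apply Proposition \ref{formula Omega} term by term: the contribution $e^{\alpha a' t^{m+1}}\wedge e^{b}\wedge e^{c}$ lies in the $y=z=0$ case and is killed, while $e^{at^{m}}\wedge e^{\alpha b' t}\wedge e^{c}$ and $e^{at^{m}}\wedge e^{b}\wedge e^{\alpha c' t}$ evaluate to $a\alpha b'c'\,ds$ and $-a\alpha b'c'\,ds$ and cancel.)

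I do not anticipate a genuine obstacle here: this is a bookkeeping corollary of Proposition \ref{main comp residue}. The only point needing a moment's care is verifying that $x+y+z\ge m$ together with $r=m+1$ really does force $w=1$ and $q=1$, so that one is never left with a $q\ge 2$ term, where \eqref{antider} would have a nontrivial antiderivative.
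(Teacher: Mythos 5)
Your proposal is correct and follows essentially the same route as the paper: both arguments observe that $m\le x$ together with $r-(x+y+z)>0$ and $r=m+1$ forces $x=m$, $y=z=0$, and then note that every term of (\ref{antider}) carries a factor of $y$ or $z$ and hence vanishes. The extra bookkeeping about $w=1$, $q=1$ and the cross-check via Proposition \ref{formula Omega} are fine but not needed beyond what the paper records.
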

 
 \begin{proof}
 In this case in order to have $m \leq x$ and $(m+1)-(x+y+z)=r-(x+y+z)>0,$ we have to have $x=m$ and $y=z=0.$ In this case, (\ref{antider}) is equal to 0. 
 \end{proof}

\begin{corollary} If $\pazocal{R}/k_{m+1}$ is a smooth $k_{m+1}$-algebra of relative dimension 1 as above, then for $r=m+1,$ we have a well-defined map  
$$
\omega_{m,m+1}:\Lambda^3 (\pazocal{R},(t^m) )^{\times} \to \Omega^{1} _{\underline{\pazocal{R}}/k}
$$
as in Definition \ref{defn omega path}, which does not depend on the choice of a splitting of $\pazocal{R}/(t^m).$  \end{corollary}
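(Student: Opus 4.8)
The plan is to deduce the statement from the invariance property packaged in Corollary~\ref{omega m+1}. Recall that Definition~\ref{defn omega path} already produces, for each splitting $\sigma$ of $\pazocal{R}/(t^m)$, a \emph{well-defined} map $\omega_{m,m+1,\sigma}$ (independent of the lift $\tau$ of $\sigma$ to a splitting of $\pazocal{R}$), so the only thing left to prove is that $\omega_{m,m+1,\sigma}$ does not depend on $\sigma$. I would therefore fix two splittings $\sigma_0,\sigma_1$ of $\pazocal{R}/(t^m)$, choose lifts $\tau_0,\tau_1\colon\underline{\pazocal{R}}_{m+1}\to\pazocal{R}$ (these exist by formal smoothness, exactly as used in Definition~\ref{defn omega path}), and form the $k_{m+1}$-automorphism $\phi:=\tau_1^{-1}\circ\tau_0$ of $\underline{\pazocal{R}}_{m+1}$, which is the identity modulo $(t)$ but in general only modulo $(t)$, not modulo $(t^m)$. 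Unwinding the transport of structure, $\omega_{m,m+1,\tau_0}=\omega_{m,m+1}\circ\iota_0$ and $\omega_{m,m+1,\tau_1}=\omega_{m,m+1}\circ\Lambda^3\phi\circ\iota_0$, where $\iota_0$ is the isomorphism $\Lambda^3(\pazocal{R},(t^m))^{\times}\to\Lambda^3(\underline{\pazocal{R}}_{m+1},(t^m))^{\times}$ induced by $\tau_0^{-1}$; since $\iota_0$ is invertible, it suffices to prove that the split map $\omega_{m,m+1}\colon\Lambda^3(\underline{\pazocal{R}}_{m+1},(t^m))^{\times}\to\Omega^1_{\underline{\pazocal{R}}/k}$ is invariant under every $k_{m+1}$-automorphism of $\underline{\pazocal{R}}_{m+1}$ that reduces to the identity modulo $(t)$.

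Next I would transfer this to $\Omega_{m,m+1}$. Such a $\phi$ satisfies $\phi(t)^m=t^m$ in $\underline{\pazocal{R}}_{m+1}$ (because $t^{m+1}=0$), so $\phi$ fixes the ideal $(t^m)$, preserves $I_{m,m+1}$, commutes with $\pi_1$ and $\pi_2$, hence commutes with $s$. Consequently $\omega_{m,m+1}\circ\Lambda^3\phi=\Omega_{m,m+1}\circ\Lambda^3\phi\circ s$, and since $\mathrm{im}(s)\subseteq I_{m,m+1}$ it is enough to show $\Omega_{m,m+1}\circ\Lambda^3\phi=\Omega_{m,m+1}$ on $I_{m,m+1}$. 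As in the proof of Corollary~\ref{cor semi path}, I would localize $\underline{\pazocal{R}}$ at a closed point and complete, reducing us to $\underline{\pazocal{R}}=k'[[s]]$ for a finite extension $k'/k$ and to an automorphism $\phi$ of $k'[[s]]_{m+1}$ with $\phi(s)=s+\alpha_1t+\cdots+\alpha_m t^m$.

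The final step is to decompose $\phi=\phi''\circ\phi_{m-1}\circ\cdots\circ\phi_1$, where each $\phi_w$ is of the form $s\mapsto s+\beta_w t^w$ with $1\le w\le m-1$ and $\phi''$ is the identity modulo $(t^m)$; this is the standard successive-approximation argument, peeling off the lowest-order term of $\phi(s)-s$ one power of $t$ at a time. Invariance of $\Omega_{m,m+1}$ under $\phi''$ is Corollary~\ref{cor semi path}, and invariance under each $\phi_w$ is Corollary~\ref{omega m+1}: by multilinearity one reduces to generators $e^{at^x}\wedge e^{bt^y}\wedge e^{ct^z}$ with $x\ge m$ of $I_{m,m+1}$, writing an arbitrary unit as a constant times a product $\prod_i e^{u_it^i}$ and using $e^{u_1t}e^{u_2t^2}\cdots=e^{\sum_i u_it^i}$; and Corollary~\ref{omega m+1} says exactly that $\Omega_{m,m+1}$ annihilates the ratio $\phi_w(e^{at^x}\wedge e^{bt^y}\wedge e^{ct^z})/(e^{at^x}\wedge e^{bt^y}\wedge e^{ct^z})$ when $r=m+1$. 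Invariance under the composition $\phi$ then follows from functoriality of $\Lambda^3$.

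I do not expect a serious obstacle here: the genuine computational content — that the antiderivative expression (\ref{antider}) vanishes identically when $r=m+1$ — is already isolated in Proposition~\ref{main comp residue} and Corollary~\ref{omega m+1}. The one place demanding actual care is the decomposition of an arbitrary automorphism that is the identity modulo $(t)$ into the elementary ones $s\mapsto s+\beta t^w$ together with one that is the identity modulo $(t^m)$, and the bookkeeping that the reductions (transport of structure, commutation with $s$, localization and completion) are harmless; these are routine, but are what I would write out most carefully.
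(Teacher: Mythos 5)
Your proposal is correct and follows the same route as the paper, which disposes of the corollary in one line by localizing and completing to $k'((s))_{m+1}$ and invoking Corollary \ref{omega m+1}. The only content you add is to spell out the transport-of-structure bookkeeping and the decomposition of a general automorphism (identity mod $(t)$) into elementary ones $s\mapsto s+\beta t^w$ composed with one that is the identity mod $(t^m)$ — details the paper leaves implicit in ``follows immediately.''
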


\begin{proof}
This follows immediately from Corollary \ref{omega m+1}, by reducing to the  case $\pazocal{R}=k'((s))_{m+1},$ after localising and completing. 
\end{proof}
For a general $r$ between $m$ and $2m,$ the following corollary will be essential. 

\begin{corollary}\label{corollary indep res}
Fix $m<r<2m,$ and let $\pazocal{R}/k_r$ be a smooth algebra of relative dimension 1 as above. Let $x$ be a closed point of the spectrum of $\pazocal{R},$  $k'$ its residue field, and let $\eta$ be the generic point of $\pazocal{R}.$ Then for any two splittings $\sigma$ and $\sigma'$ of $\pazocal{R}_{\eta}/(t^m),$ the reduction modulo $(t^m)$  of the local ring of $\pazocal{R}$ at $\eta,$ and for any $\alpha \in \Lambda ^3 (\pazocal{R}_{\eta},(t^m))^{\times},$  the residues of  $\omega_{m,r,\sigma}(\alpha)$ and  $\omega_{m,r,\sigma'}(\alpha) \in \Omega^{1}_{\underline{\pazocal{R}}_{\eta}/k}$ at $x$ are the same: 
$$
res_x\omega_{m,r,\sigma'}(\alpha)=res_x\omega_{m,r,\sigma}(\alpha) \in k'.
$$
\end{corollary}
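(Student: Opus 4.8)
The plan is to reduce the statement to the explicit computation in Proposition~\ref{main comp residue}. First I would observe that it suffices to treat the difference of the two splittings, i.e. to consider an automorphism $\tau = {\sigma'}^{-1}\circ \sigma$ of $\pazocal{R}_{\eta}/(t^m)$ which is the identity modulo $(t)$; by transport of structure and Definition~\ref{defn omega path}, $\omega_{m,r,\sigma}(\alpha)$ and $\omega_{m,r,\sigma'}(\alpha)$ differ precisely by the value of $\Omega_{m,r}$ on an element of the form $\frac{\Lambda^3\tilde\tau^*\alpha}{\alpha}$, where $\tilde\tau$ is any lift of $\tau$ to an automorphism of $\pazocal{R}_\eta$; more precisely, after choosing splittings the residue of $\omega_{m,r,\sigma'}(\alpha) - \omega_{m,r,\sigma}(\alpha)$ becomes $res_x\,\Omega_{m,r}\!\big(\tfrac{\Lambda^3\tilde\sigma(\beta)}{\beta}\big)$ for suitable $\beta$ in the split model $k'((s))_r$ after localizing at $x$ and completing. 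So the key reduction is: the residue at $x$ of $\Omega_{m,r}$ applied to $\frac{\tilde\sigma(\beta)}{\beta}$ vanishes, for every $\beta \in \Lambda^3(\pazocal{R}_\eta,(t^m))^\times$ and every automorphism $\tilde\sigma$ of the completed local ring that is the identity modulo $(t^m)$.

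Next I would reduce such a $\tilde\sigma$ to the elementary form $s \mapsto s + \alpha t^w$ with $m \le w$: a general such automorphism is, after completion, an infinite composite of elementary ones (in increasing $\star$-weight), and since $\Omega_{m,r}$ only sees the weight $r$ part and $r < 2m$, only finitely many factors contribute, and one can treat them one at a time by a telescoping/induction argument on $w$. This uses that $\Omega_{m,r}$ kills weights other than $r$ and the multiplicativity/additivity built into its construction. I would also reduce $\beta$ to a wedge of elementary units $e^{at^x}\wedge e^{bt^y}\wedge e^{ct^z}$ by bilinearity and by writing arbitrary units as products of such, exactly as in the proof of Proposition~\ref{formula Omega}; here one must be slightly careful that the relative condition $\beta \in \Lambda^3(\pazocal{R}_\eta,(t^m))^\times$ forces $x \ge m$ in the surviving terms, which is exactly the hypothesis of Proposition~\ref{main comp residue}.

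At this point Proposition~\ref{main comp residue} applies verbatim: $\Omega_{m,r}\!\big(\tfrac{\sigma(e^{at^x}\wedge e^{bt^y}\wedge e^{ct^z})}{e^{at^x}\wedge e^{bt^y}\wedge e^{ct^z}}\big)$ is either $0$ or an \emph{exact} $1$-form $d(G)$ for an explicit $G \in k'((s))$. Since the residue of an exact differential $dG$ on $k'((s))$ is $0$ (the residue of $d$ of anything vanishes), we conclude $res_x \Omega_{m,r}(\cdots) = 0$, which is exactly what is needed. Summing over the finitely many contributing elementary pieces and using additivity of the residue gives $res_x\omega_{m,r,\sigma'}(\alpha) = res_x\omega_{m,r,\sigma}(\alpha)$.

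The main obstacle I anticipate is \emph{not} Proposition~\ref{main comp residue} itself — that is already done in the excerpt — but the bookkeeping in the reductions: justifying rigorously that a general splitting-change is a finite composite of elementary automorphisms \emph{as far as $\Omega_{m,r}$ is concerned}, and that under this decomposition the value of $\Omega_{m,r}$ on $\frac{\tilde\sigma(\beta)}{\beta}$ splits as a sum of the elementary contributions plus terms $\Omega_{m,r}$ kills (e.g. terms of wrong weight, or terms handled by Proposition~\ref{value of omega on a boundary} / Corollary~\ref{cor semi path}). One has to be attentive to cross-terms when composing two elementary automorphisms, since $\Omega_{m,r}$ is only additive on $\Lambda^3$, not multiplicative; the saving grace is again that $r < 2m$ bounds how many weight-increasing corrections can interact before exceeding weight $r$. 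Once that combinatorial reduction is set up cleanly, the residue-of-an-exact-form observation closes the argument.
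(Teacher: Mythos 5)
Your strategy is the same as the paper's: localize and complete at $x$ to reduce to $k'((s))_r$, decompose the change of splitting into elementary automorphisms $s\mapsto s+\alpha t^w$, invoke Proposition \ref{main comp residue} to see that each contribution to the difference $\omega_{m,r,\sigma'}(\alpha)-\omega_{m,r,\sigma}(\alpha)$ is an exact form $dG$ with $G\in k'((s))$, and conclude that the residue vanishes. That is exactly how the paper argues, and your additional remarks (the hypothesis $m\le x$ of Proposition \ref{main comp residue} is supplied by the relative condition on $\alpha$; only finitely many elementary factors contribute because $\Omega_{m,r}$ only sees $\star$-weight $r<2m$) are the right bookkeeping.

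One slip needs correcting, because as written it collapses the proof to the trivial case. You first (correctly) say that ${\sigma'}^{-1}\circ\sigma$ is the identity modulo $(t)$, but then state the ``key reduction'' for automorphisms that are the identity modulo $(t^m)$ and accordingly restrict the elementary pieces to $m\le w$. If the automorphism really were the identity modulo $(t^m)$, then $\omega_{m,r,\sigma}=\omega_{m,r,\sigma'}$ as forms by Corollary \ref{cor semi path}, and there would be nothing to prove; the whole point of the corollary is that $\sigma$ and $\sigma'$ may differ modulo $(t^m)$, so the lifted splittings differ by an automorphism that is only the identity modulo $(t)$, and the elementary pieces have $1\le w$, including $w<m$. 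Fortunately Proposition \ref{main comp residue} is stated precisely for $w\ge 1$ (only the exponent $x$ in $e^{at^x}$ is required to be $\ge m$), so once you drop the restriction $m\le w$ your argument goes through verbatim and agrees with the paper's.
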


\begin{proof}
Again by localising and completing we reduce to the case of $k'((s))_{r}.$ By Proposition  \ref{main comp residue}, we see that the difference $\omega_{m,r,\sigma'}(\alpha)-\omega_{m,r,\sigma}(\alpha)$ is the differential of an element in $k'((s))$ and hence has zero residue.    
\end{proof}

\begin{remark}
Let $\pazocal{R}/k_r$
 be as above. Suppose that $\tau$ and $\sigma$ are two splittings $\underline{\pazocal{R}}_{m}\to \pazocal{R}/(t^m).$ In this case, there should be  a map 
$$
h\omega_{m,r}(\tau,\sigma):\Lambda^3 (\pazocal{R},(t^m) )^{\times} \to \underline{\pazocal{R}}
$$
such that 
$$
d(h\omega_{m,r}(\tau,\sigma))=\omega_{m,r,\tau}-\omega_{m,r,\sigma}. 
$$
Moreover, $h\omega_{m,r}(\tau,\sigma)$ should vanish on the image of $B_{2}(\pazocal{R},(t^m))\otimes (\pazocal{R},(t^m))^{\times}.$

In case $r=m+1,$ $h\omega_{m,m+1}=0$ does satisfy the properties above.  Let us look at the first non-trivial case when $m=3$ and $r=5.$ Note that the reduction modulo $(t^2)$ of the automorphism $\tau^{-1} \circ \sigma: \underline{\pazocal{R}}_3 \to \underline{\pazocal{R}}_3,$ which lifts the identity map on $\underline{\pazocal{R}},$ is determined by a $k$-derivation $\theta: \underline{\pazocal{R}} \to \underline{\pazocal{R}}.$

Define $h\Omega_{3,5}(\theta):I_{3,5} \subseteq (\Lambda ^{3}\underline{\pazocal{R}}_{5} ^{\times})^{\circ} \to \underline{\pazocal{R}},$ as 
$$
h\Omega_{3,5}(\theta)(e^{at^3}\wedge e^{bt}\wedge c)=ab\theta(\frac{dc}{c}),
$$
where $a,\, b, \in \underline{\pazocal{R}}$ and $c \in \underline{\pazocal{R}}^{\times}.$ Let $h\Omega_{3,5}(\theta)$ be defined as 0 on all the other type of  elements in $I_{3.5}$. Then $h\omega_{3,5}(\tau,\sigma ): \Lambda^3 (\pazocal{R},(t^3) )^{\times} \to \underline{\pazocal{R}} $ defined by 
$$
h\omega_{3,5}(\tau,\sigma )(\alpha) :=-h\Omega_{3,5}(\theta)(s(\sigma^{-1}(\alpha))) 
$$
satisfies the desired properties above. An analog of this construction is one of the main tools in defining an infinitesimal version of the Bloch regulator in \cite{unv4}. 
\end{remark}

\begin{definition}\label{defn residue 1-form}
Let $\pazocal{R}/k_r$ be a smooth algebra of relative dimension 1 as above. Let $\eta$ be the generic point and $x$ be a closed point of the spectrum of $\pazocal{R}.$ Then we have a canonical map 
$$
res_{x}\omega_{m,r}:\Lambda ^3 (\pazocal{R}_{\eta},(t^m))^{\times}\to k',
$$
where $k'$ is the residue field of $x.$ The map is defined by choosing any splitting $\sigma$ of $\pazocal{R}_{\eta}/(t^m)$ and letting $res_{x}\omega_{m,r}:=res_{x}\omega_{m,r,\sigma}.$ This is independent of the choice of the splitting $\sigma$, by Corollary \ref{corollary indep res}.
\end{definition}

\subsection{Variant of the residue map for different liftings.}\label{subsection resdiue of different liftings}  For the construction of the infinitesimal Chow dilogarithm, we need a variant of Definition \ref{defn residue 1-form}. Fortunately, we do not need to do extra work, Corollary \ref{cor semi path}  and Proposition \ref{main comp residue}   will still be sufficient to give us what we are looking for. 

Suppose that  $A$ is a ring with an ideal $I$ and $B$ and $B'$ are two $A$-algebras together with an isomorphism $\chi: B/IB \simeq B'/IB'$ of $A$-algebras. We let 
$$
(B,B',\chi)^{\times}:=\{(p,p')| p \in B^{\times} \;{\rm and} \;p' \in B'^{\times} \; {\rm s.t.}\; \chi(p|_I)=p'|_I \},
$$
where $p|_{I}$ denotes the image of $p$ in $(B/IB)^{\times}.$ Similarly, we define $(B,B',\chi) ^{\flat}$ and $B_{2}(B,B',\chi)$ and obtain maps, $B_{2}(B,B',\chi) \to \Lambda ^{2}(B,B',\chi)^{\times}$ and $B_{2}(B,B',\chi)\otimes (B,B',\chi)^{\times} \to \Lambda ^{3} (B,B',\chi)^{\times}.$ We will use these definitions below with $A=k_{\infty}$ and $I=(t^m).$ In fact the following variant will be essential in what follows. 

Suppose that $\pazocal{S}/k_m$ is a smooth algebra of relative dimension 1, with $x$ a closed point and $\eta$ the generic point of its spectrum.    Suppose that $\pazocal{R},\, \pazocal{R}'/k_r$ are liftings of $\pazocal{S}_{\eta}$ to $k_r.$ In other words, we have  fixed isomorphisms: 
$$
\psi:\pazocal{R}/(t^m)\to \pazocal{S}_{\eta}
$$
and 
$$
\psi':\pazocal{R}'/(t^m)\to \pazocal{S}_{\eta}.
$$

Letting $\chi:={\psi'}^{-1}\circ\psi,$  we would like to construct a map 
$$
res_{x}\omega_{m,r}:\Lambda ^3 (\pazocal{R},\pazocal{R}',\chi)^{\times} \to k',
$$
where $k'$ is the residue field of $x.$ Note that $(\pazocal{R},\pazocal{R}',\chi)^{\times}$ consists of pairs of $(p,p')$ with $p \in \pazocal{R}^{\times}$ and $p' \in {\pazocal{R}'}^{\times}$ such that $\psi(p|_{t^m})=\psi'(p'|_{t^m}).$ In other words, it consists of different liftings of elements of $\pazocal{S}_{\eta} ^{\times}.$ We sometimes use the notation $(\pazocal{R},\pazocal{R}',\psi,\psi')^{\times}$ to denote the same set. 

In order to construct this map,   let $$
\tilde{\chi}:\pazocal{R} \to \pazocal{R}'
$$
be an isomorphism of $k_r$-algebras which is a lifting of  $\chi.$ This provides us with a map 
$$
\xymatrix{
(\pazocal{R},\pazocal{R}',\chi)^{\times}  \ar^{\tilde{\chi}^{*}}[r] & (\pazocal{R},(t^m))^{\times}}.
$$
Choosing a splitting $\sigma: \underline{\pazocal{R}}_{m} \to \pazocal{R}/(t^m),$ by Definition \ref{defn omega path}  we obtain the map $\omega_{m,r,\sigma},$ composing this with the map induced by the reduction $\underline{\psi}$ of $\psi ,$ we obtain 
\begin{align}\label{residue general diagram}
\xymatrix@R+1pc@C+1pc{
 \Lambda^3 (\pazocal{R},\pazocal{R}',\chi)^{\times} \ar^{\Lambda^3\tilde{\chi}^{*}}[r] & \Lambda^3 (\pazocal{R},(t^m))^{\times} \ar[r]^{\;\;\;\omega_{m,r,\sigma}} &\Omega^1 _{\underline{\pazocal{R}}/k} \ar[r]^{d\underline{\psi}}& \Omega^1 _{\underline{\pazocal{S}}_{\eta}/k} \ar^{res_x}[r]& k'.}
\end{align}

\begin{proposition}
The map (\ref{residue general diagram}) above is independent of the choices of the lifting $\tilde{\chi}$ of $\chi$ and the choice of the splitting $\sigma$ of $\pazocal{R}/(t^m).$ 
\end{proposition}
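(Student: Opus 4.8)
The plan is to show independence in two stages: first fix the splitting $\sigma$ and vary the lifting $\tilde{\chi}$, then fix $\tilde{\chi}$ and vary $\sigma$. For the first stage, suppose $\tilde{\chi}$ and $\tilde{\chi}'$ are two liftings of $\chi$ to isomorphisms $\pazocal{R}\to\pazocal{R}'$ of $k_r$-algebras. Then $\tilde{\chi}'^{-1}\circ\tilde{\chi}$ is an automorphism of $\pazocal{R}$ as a $k_r$-algebra which reduces to the identity on $\pazocal{R}/(t^m)$. Composing with the splitting $\sigma$ (and transporting structure as in Definition \ref{defn omega path}), this means $\tilde{\chi}^*$ and $\tilde{\chi}'^*$ differ by precomposition with an automorphism of $(\pazocal{R},(t^m))^{\times}$ induced by such an automorphism of $\pazocal{R}$. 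By Corollary \ref{cor semi path} we have $\omega_{m,r,\sigma}\circ\Lambda^3(\text{that automorphism})=\omega_{m,r,\sigma}$, so the composite map in (\ref{residue general diagram}) is unchanged. Here it is important that Corollary \ref{cor semi path} gives invariance of $\omega_{m,r}$ itself (not merely its residue) under automorphisms that are the identity modulo $(t^m)$; no residue is needed at this step.

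For the second stage, fix $\tilde{\chi}$ and let $\sigma,\sigma'$ be two splittings $\underline{\pazocal{R}}_m\to\pazocal{R}/(t^m)$. These two splittings are in general genuinely different modulo $(t^m)$—this is exactly the phenomenon that does not occur when $r=m+1$—so we cannot expect $\omega_{m,r,\sigma}=\omega_{m,r,\sigma'}$ as $1$-forms. However, $\sigma'^{-1}\circ\sigma$ is an automorphism of $\underline{\pazocal{R}}_m$ which is the identity modulo $(t)$, and after localizing at the generic point and completing we are reduced to the situation of $k'((s))_\infty$ (or rather $k'((s))_r$) with an automorphism of the type analyzed in Proposition \ref{main comp residue}, namely one sending $s\mapsto s+\alpha t^w$ with $1\le w$, identity modulo $(t)$. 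Proposition \ref{main comp residue} shows that on generators $e^{at^x}\wedge e^{bt^y}\wedge e^{ct^z}$ with $x\geq m$, the difference $\Omega_{m,r}$ applied to $\sigma(\cdot)/(\cdot)$ is either zero or the differential $d(\text{something in }k'((s)))$ of an explicit element; hence $\omega_{m,r,\sigma}(\alpha)-\omega_{m,r,\sigma'}(\alpha)$ is exact, and therefore has zero residue at $x$. This is precisely the content already packaged as Corollary \ref{corollary indep res}; applying it after the reductions $d\underline{\psi}$ and $res_x$ finishes the argument.

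Combining the two stages, the composite (\ref{residue general diagram}) depends on neither $\tilde{\chi}$ nor $\sigma$. One routine point to check is that the reductions to the complete local case are legitimate: the map $\Lambda^3(\pazocal{R}_\eta,(t^m))^\times\to k'$ factors through the completion $\widehat{\pazocal{O}}_{\underline{\pazocal{S}}_\eta,x}$ because $res_x$ does, and $\omega_{m,r,\sigma}$ is defined in terms of $\Omega_{m,r}$ which is local in nature (it lands in $\Omega^1_{\underline{\pazocal{R}}/k}$ and commutes with localization by construction, cf.\ the localization arguments in Proposition \ref{formula Omega} and Corollary \ref{cor semi path}).

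The main obstacle is conceptual bookkeeping rather than computation: one must be careful that ``independence of $\tilde{\chi}$'' genuinely reduces to Corollary \ref{cor semi path} and not to the weaker residue statement—i.e.\ that the relevant automorphism of $\pazocal{R}$ really is the identity modulo $(t^m)$ and not merely modulo $(t)$—while ``independence of $\sigma$'' genuinely does need the residue (the $1$-forms differ), so the two halves of the proof invoke different results and it would be an error to conflate them. The heavy lifting has already been done in Corollaries \ref{cor semi path} and \ref{corollary indep res}; what remains is to assemble them in the right order.
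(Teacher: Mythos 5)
Your proposal is correct and follows essentially the same route as the paper: independence of $\tilde{\chi}$ is reduced to Corollary \ref{cor semi path} (via the automorphism $\tilde{\chi}'^{-1}\circ\tilde{\chi}$, which is the identity modulo $(t^m)$, so the $1$-form itself is unchanged), and independence of $\sigma$ is reduced to Proposition \ref{main comp residue} (equivalently Corollary \ref{corollary indep res}), where only the residue is invariant because the forms differ by an exact term. Your explicit warning not to conflate the two invariance statements is exactly the point the paper's terse proof relies on.
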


\begin{proof}
That the composition is independent of the choice of $\tilde{\chi}$ follows from Corollary \ref{cor semi path} and Definition \ref{defn omega path}. That it is independent of the choice of the splitting $\sigma$ follows from Proposition \ref{main comp residue}. 
\end{proof}

\begin{definition}
We denote the composition (\ref{residue general diagram}) above by 
$$
res_{x}\omega_{m,r}(\psi,\psi'):  \Lambda^3 (\pazocal{R},\pazocal{R}',\psi,\psi')^{\times} \to k'.
$$
If $\psi$ and $\psi'$ are clear from the context, we denote this map by $res_x\omega_{m,r},$ and $(\pazocal{R},\pazocal{R}',\psi, \psi')^{\times}$ by $(\pazocal{R},\pazocal{R}',(t^m))^{\times}.$ Depending on the context, we also use the notation $res_x\omega_{m,r}(\chi):\Lambda ^3(\pazocal{R},\pazocal{R}',\chi) \to k'$ for the same map, with $\chi=\psi'^{-1}\circ\psi.$
\end{definition}
%We would like to emphasize that only the residue of the 1-form is independent of the choices in general. This is unlike the case in modulus 2, where we can define a 1-form independent of the splittings. 
With these definitions, we have the following corollary.

\begin{corollary}\label{corollary value of omega on a boundary}
Suppose that $\pazocal{R}$ and $\pazocal{R}'$ are smooth $k_r$-algebras of dimension 1 as above which are liftings of the generic local ring $\pazocal{S}_{\eta}$ of a smooth $k_m$-algebra $\pazocal{S}.$ Let  $\chi: \pazocal{R}/(t^m) \to \pazocal{R}'/(t^m)$ be the corresponding isomorphism of $k_m$-algebras. Let $x$ be a closed point of $\underline{\pazocal{S}}.$ Then the map 
$$
res_{x}\omega_{m,r}(\chi):\Lambda ^3(\pazocal{R},\pazocal{R}',\chi)^{\times}\to k'
$$
vanishes on the image of $B_{2}(\pazocal{R},\pazocal{R}',\chi)\otimes (\pazocal{R},\pazocal{R}',\chi)^{\times}.$ 

\end{corollary}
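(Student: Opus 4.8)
The plan is to reduce the statement to Proposition \ref{value of omega on a boundary} by exploiting the functoriality of the relative weight-$3$ complex under the map $\tilde{\chi}^{*}$. Fix, as in the construction of $res_{x}\omega_{m,r}(\chi)$, a lifting $\tilde{\chi}:\pazocal{R}\to\pazocal{R}'$ of $\chi$ and a splitting $\sigma:\underline{\pazocal{R}}_{m}\to\pazocal{R}/(t^m)$. By the proposition establishing that the composition (\ref{residue general diagram}) is independent of these two choices, it suffices to prove the vanishing for one fixed pair $(\tilde{\chi},\sigma)$; with this choice $res_{x}\omega_{m,r}(\chi)$ is the composite $res_{x}\circ d\underline{\psi}\circ\omega_{m,r,\sigma}\circ\Lambda^{3}\tilde{\chi}^{*}$.

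First I would record the (entirely formal) fact that $\tilde{\chi}^{*}$ is a morphism of relative weight-$3$ complexes. On $\pazocal{R}^{\times}\times{\pazocal{R}'}^{\times}$ the map $\tilde{\chi}^{*}$ is $(p,p')\mapsto(p,\tilde{\chi}^{-1}(p'))$, i.e.\ the identity on the first factor and the ring isomorphism $\tilde{\chi}^{-1}$ on the second, and it carries $(\pazocal{R},\pazocal{R}',\chi)^{\times}$ bijectively onto $(\pazocal{R},(t^m))^{\times}$ precisely because $\tilde{\chi}$ reduces to $\chi$ modulo $(t^m)$. Since $1-(\cdot)$, multiplication, inversion, and $\wedge$ are all performed componentwise, $\tilde{\chi}^{*}$ sends $(\pazocal{R},\pazocal{R}',\chi)^{\flat}$ into $(\pazocal{R},(t^m))^{\flat}$, carries the five-term relations of $B_{2}(\pazocal{R},\pazocal{R}',\chi)$ to those of $B_{2}(\pazocal{R},(t^m))$, and hence induces compatible maps on $B_{2}$, on $B_{2}\otimes(\cdot)^{\times}$, and on $\Lambda^{3}(\cdot)^{\times}$ that commute with the differentials $\delta$; in particular $\Lambda^{3}\tilde{\chi}^{*}\circ\delta=\delta\circ\tilde{\chi}^{*}$ on $B_{2}(\pazocal{R},\pazocal{R}',\chi)\otimes(\pazocal{R},\pazocal{R}',\chi)^{\times}$.

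With this in hand the conclusion is immediate. For $\xi\in B_{2}(\pazocal{R},\pazocal{R}',\chi)\otimes(\pazocal{R},\pazocal{R}',\chi)^{\times}$ one has $\Lambda^{3}\tilde{\chi}^{*}(\delta\xi)=\delta(\tilde{\chi}^{*}\xi)$, and $\tilde{\chi}^{*}\xi$ lies in $B_{2}(\pazocal{R},(t^m))\otimes(\pazocal{R},(t^m))^{\times}$. By Proposition \ref{value of omega on a boundary}, $\omega_{m,r,\sigma}$ annihilates $\delta(\tilde{\chi}^{*}\xi)$ already in $\Omega^{1}_{\underline{\pazocal{R}}/k}$, so applying $d\underline{\psi}$ and then $res_{x}$ gives $res_{x}\omega_{m,r}(\chi)(\delta\xi)=0$, which is the assertion.

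I do not expect any real obstacle: all the substantive content sits in Proposition \ref{value of omega on a boundary}, and what remains is the bookkeeping of the second paragraph. The one point I would be careful to spell out is that $\tilde{\chi}^{*}$ acts through a ring homomorphism on each component, since this is exactly what makes it commute with forming $1-a$, with taking the ratios appearing in the five-term relation, and with $\delta$; it is also why $\tilde{\chi}^{*}$ can be promoted from a bijection of unit groups to a morphism of the relative Bloch complexes.
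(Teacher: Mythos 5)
Your proposal is correct and follows the same route as the paper: the paper's proof is literally the one-line reduction to Proposition \ref{value of omega on a boundary}, and your paragraphs supply exactly the bookkeeping that reduction implicitly uses, namely that $\tilde{\chi}^{*}=(\mathrm{id},\tilde{\chi}^{-1})$ is a componentwise ring isomorphism, hence a morphism of relative Bloch complexes commuting with $\delta$, so $\Lambda^{3}\tilde{\chi}^{*}$ carries boundaries to boundaries in $\Lambda^{3}(\pazocal{R},(t^m))^{\times}$. Nothing further is needed.
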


\begin{proof}
Follows from Proposition \ref{value of omega on a boundary}. 
\end{proof}

\section{The residue of $\omega_{m,r}$ on good liftings.} 

Suppose that $\pazocal{R}/k_r$ is as above. Moreover, we assume   that the reduction $\underline{\pazocal{R}}$ of $\pazocal{R}$ modulo $(t)$ is a discrete valuation ring with $x$ being the closed point. We let $\tilde{x}/k_r$ be a {\it lifting} of $x$ to $\pazocal{R}.$  By this what we mean is as follows. Let $s$ be a uniformizer at $x,$ and let $\tilde{s}$ be any lifting of $s$ to $\pazocal{R},$ we call $\tilde{s}$ also a uniformizer at $x$ on $\pazocal{R}.$ The associated scheme $\tilde{x},$ which is smooth over $k_r,$ is what we call a lifting of $x.$ In other words a lifting of $x$ is a 0-dimensional closed subscheme $\tilde{x}$ of $\pazocal{R}$ such that its ideal is generated by a single element which reduces to a uniformizer on the closed fiber. Note that if we are given $\tilde{x},$ then $\tilde{s}$ is determined up to a unit in $\pazocal{R}.$ Sometimes we will abuse the notation and write $(\tilde{s})$ instead of $\tilde{x}.$   Let $\eta$ denote the generic point of $\pazocal{R}.$ We let 
$$
(\pazocal{R},\tilde{x})^{\times}:=\{\alpha \in \pazocal{R} _{\eta} ^{\times}| \alpha=u\tilde{s}^n,\; {\rm for\; some} \;\; u \in \pazocal{R}^{\times}\; {\rm and} \; n \in \mathbb{Z}  \}.
$$
We  say that  an element $\alpha \in \pazocal{R}^{
\times} _{\eta}$ is  {\it good with respect to } $\tilde{x},$ if $\alpha \in (\pazocal{R},\tilde{x})^{\times}.$  Note that this property depends only on $\tilde{x},$ and not on $\tilde{s}.$ The importance of this notion for us is that for wedge products of good liftings, we can define their residue along $(s)$ as in  \cite[\textsection 2.4.5]{unv3}. Namely, there is a map 
$$
res_{\tilde{x}}: \Lambda ^{n}(\pazocal{R},\tilde{x})^{\times} \to \Lambda^{n-1}(\pazocal{R}/(s))^{\times},
$$
with the properties that it vanishes on $\Lambda ^{n}\pazocal{R}^{\times}$ and $s\wedge \alpha_{1}\wedge \cdots \wedge \alpha_{n-1}$ is mapped to $\underline{\alpha}_{1}\wedge \cdots \wedge \underline{\alpha}_{n-1},$ if $\alpha_ i \in \pazocal{R}^{\times}$ and $\underline{\alpha}_{i}$
 denotes the image of $\alpha _i$ in $(R/(s))^{\times},$ for $1 \leq i \leq n-1.$  

Suppose that $\pazocal{R}'/k_r$  is another such ring, and $\tilde{x}'$ a lifting of the closed point of $\underline{\pazocal{R}}'.$  Suppose that there is an isomorphism $\chi: \pazocal{R}/(t^m) \to \pazocal{R}'/(t^m)$ which transfers $\tilde{x}/(t^m)$ to $\tilde{x}'/(t^m).$ Then we let 
$$
(\pazocal{R},\pazocal{R}',\tilde{x},\tilde{x}',\chi)^{\times}:=\{(p,p')| p \in (\pazocal{R},\tilde{x})^{\times} \;{\rm and} \;p' \in (\pazocal{R}',\tilde{x}')^{\times} \; {\rm s.t.}\; \chi(p|_{t^m})=p'|_{t^m} \}.
$$
Note that clearly $(\pazocal{R},\pazocal{R}',\tilde{x},\tilde{x}',\chi)^{\times}\subseteq (\pazocal{R},\pazocal{R}',\chi)^{\times}.$ In case $\pazocal{R}'=\pazocal{R}$ with $\chi$ the identity map, we denote the corresponding group by $(\pazocal{R},\tilde{x},(t^m))^{\times}.$ Denote the natural maps $(\pazocal{R},\pazocal{R}',\tilde{x},\tilde{x}',\chi)^{\times} \to (\pazocal{R},\tilde{x})^{\times}$ and 
  $(\pazocal{R},\pazocal{R}',\tilde{x},\tilde{x}',\chi)^{\times} \to (\pazocal{R}',\tilde{x}')^{\times}$ by $\pi_1$ and $\pi_2.$ 
 
In this section, we would like to compute $res_x \omega_{m,r}(\chi)(\alpha) $ for $\alpha \in \Lambda ^3(\pazocal{R},\pazocal{R}',\tilde{x},\tilde{x}',\chi)^{\times}$ in terms of the value of $\ell  _{m,r}$ on the residue of $\alpha.$ The main result of this section is Proposition \ref{general residue prop}. We will first start with certain explicit computations on the formal power series rings and then finally  reduce our general statement to these special cases. Let us immediately remark that in order to compute the residues, we immediately reduce to the case when $\pazocal{R}$ and $\pazocal{R}'$ are complete with respect to the ideal which correspond to their closed points.

We will first consider the case of $R=k'[[s]]$ and that of the same uniformizer on both of the liftings as follows. 

 Note that 
$$
res_{(s)}(s\wedge \alpha \wedge \beta)=\underline{\alpha}\wedge \underline {\beta} \in \Lambda ^2 {k'_r} ^{\times},
$$
where $\underline{\alpha}$ and $\underline {\beta}$ are the images of $\alpha$ and $\beta$ under the natural projection $\pazocal{R}^{\times} \to (\pazocal{R}/(s))^{\times}={k'_r} ^{\times}. $ 
Similarly for $p'.$ 

\begin{lemma}\label{lemma same s}
 Suppose that $R=k'[[s]],$   and $\pazocal{R}:=R_r=k'[[s,t]]/(t^r).$   
Suppose that $\alpha, \alpha' ,$ and $\beta \in \pazocal{R} ^{\times}$ such that $\alpha'|_{t^m} =\alpha |_{t^m} \in \pazocal{R}/(t^m).$ Let $p':=s\wedge \alpha' \wedge \beta,$  $p:=s\wedge \alpha \wedge \beta$ and  $(p,p'):=(s,s) \wedge (\alpha,\alpha')\wedge (\beta,\beta) \in \Lambda^3(\pazocal{R},(s),(t^m))^{\times}\subseteq \Lambda ^3 (\pazocal{R}_{\eta},(t^m))^{\times}.$ Then  the residue of $ \omega_{m,r}(p,p' ) $ at the closed point of $\underline{\pazocal{R}}=k'[[s]]$ is given by 
$$
res_{s=0} \omega_{m,r}(p,p' )=\ell_{m,r}(res_{(s)}(p))-\ell_{m,r}(res_{(s)}(p')).
$$
\end{lemma}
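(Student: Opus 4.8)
The plan is to reduce the identity to the explicit formula for $\Omega_{m,r}$ in Proposition \ref{formula Omega} by writing everything in terms of exponentials of the form $e^{at^i}$ and tracking residues coefficient by coefficient. First I would observe that both sides of the claimed equality are bilinear (additive) in the wedge factors and continuous in the appropriate sense, so after localizing and completing — as the paper already allows — it suffices to verify the identity on generators. Concretely, since $\alpha,\alpha',\beta \in \pazocal{R}^\times = (k'[[s,t]]/(t^r))^\times$ and $\alpha'|_{t^m}=\alpha|_{t^m}$, I can write $\alpha = \alpha_0 e^{u}$ and $\alpha' = \alpha_0 e^{u'}$ with $\alpha_0 \in k'[[s]]^\times$ the common constant term and $u, u' \in t\,k'[[s,t]]/(t^r)$ satisfying $u|_m = u'|_m$; similarly $\beta = \beta_0 e^{v}$. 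Then $(\alpha,\alpha')$ differs from the "split" pair only in the $t$-degrees $\geq m$ part of $u,u'$, and $s(p,p') = p - p'$ (in the notation of \S5) reduces, modulo the image of $B_2$, to a sum of terms $e^{at^i}\wedge e^{bt^j}\wedge e^{ct^k}$ to which the formula of Proposition \ref{formula Omega} applies.

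The key computational step is to expand $res_{s=0}\omega_{m,r}(p,p')$ using $\omega_{m,r}=\Omega_{m,r}\circ s$ and the explicit formula $\Omega_{m,r}(e^{at^x}\wedge e^{bt^y}\wedge e^{ct^z}) = a(yb\,dc - zc\,db)$ for $x\geq m$, $x+y+z=r$. When one of the three wedge entries is the constant uniformizer $s$ (i.e.\ $e^{at^x}$ with $x=0$, which does not satisfy $x\geq m$), one must instead arrange the monomial so the factor of $t$-degree $\geq m$ sits in the first slot; using the antisymmetry of $\Lambda^3$ and the fact that $s$ itself contributes $ds$ and has no higher $t$-part, I expect the surviving contributions to be exactly those where the "$s$" slot plays the role of the $y=0$ or $z=0$ degenerate case of Proposition \ref{formula Omega}, for which the formula reads $\Omega_{m,r}(e^{at^x}\wedge s\wedge e^{ct^z}) = a(\,(\text{term with }db=d\log s)\,)$, and the point is that $d\log s = \tfrac{ds}{s}$ has residue $1$ at $s=0$. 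Matching these against $\ell_{m,r}(res_{(s)}(p)) = \ell_{m,r}(\underline\alpha\wedge\underline\beta) = (\Lambda^2\log^\circ(\underline\alpha\wedge\underline\beta)\mid \sum_{1\le i\le r-m} it^{r-i}\wedge t^i)$, and noting $\underline\alpha, \underline\beta$ are obtained by setting $s=0$, should produce precisely the difference on the right-hand side, since the $\ell_{m,r}$ of $res(p)$ picks out the $t$-degree $r$ part of $\log^\circ(\underline\alpha)$ wedged against $\log^\circ(\underline\beta)$ weighted by the coefficients $i$, which is exactly what the residue of $a(yb\,dc - zc\,db)$ collects after one integrates by parts and uses $res_{s=0} d(\text{anything in }k'((s)))=0$.

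The main obstacle, I expect, will be the bookkeeping that shows the "non-degenerate" terms $e^{at^x}\wedge e^{bt^y}\wedge e^{ct^z}$ with all of $x,y,z\geq 1$ — which arise when expanding $s(p,p')$ because $s\wedge\alpha\wedge\beta$ is not literally a sum of monomials with an $s$ in one slot once one substitutes $\alpha=\alpha_0 e^u$ etc.\ — contribute zero to the residue at $s=0$. Here one uses that in $\Omega_{m,r}(e^{at^x}\wedge e^{bt^y}\wedge e^{ct^z}) = a(yb\,dc - zc\,db)$ the entries $a,b,c$ lie in $k'[[s]]$ with no pole, so $yb\,dc$ and $zc\,db$ are regular differentials and have zero residue; the only way to get a nonzero residue is through a genuine $d\log s = ds/s$, which forces the uniformizer $s$ into one of the slots, hence the degenerate case. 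Once this vanishing is in place, the identity becomes a finite, purely formal comparison of the two degenerate contributions (one from $p$, one from $p'$, with opposite signs coming from $s = \Lambda^3\pi_1 - \Lambda^3\pi_2$) against the two terms $\ell_{m,r}(res_{(s)}p) - \ell_{m,r}(res_{(s)}p')$, and the equality follows from the defining formula of $\ell_{m,r}$ in Definition \ref{defnlmr} together with Lemma \ref{alphalemma}. A secondary technical point to handle carefully is that $res_{(s)}(p)$ requires $p$ to be a wedge of good elements, which holds here because $s$ is literally the uniformizer and $\alpha,\beta$ are units in $\pazocal{R}$, so $res_{(s)}(s\wedge\alpha\wedge\beta) = \underline\alpha\wedge\underline\beta$ is unambiguous.
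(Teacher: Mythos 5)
Your proposal is correct and follows essentially the same route as the paper: reduce to monomial generators, apply the explicit formula of Proposition \ref{formula Omega}, observe that only the terms carrying $d\log s=\tfrac{ds}{s}$ contribute to the residue, and match the result against $\ell_{m,r}\circ res_{(s)}$ via Definition \ref{defnlmr}. The paper's argument is slightly cleaner at one point you flag as the ``main obstacle'': since $p-p'=s\wedge\tfrac{\alpha}{\alpha'}\wedge\beta$ with $\tfrac{\alpha}{\alpha'}\in 1+(t^m)$, every monomial in the expansion already has the uniformizer $s$ in one slot, so the non-degenerate terms with all $t$-degrees positive never arise and no separate vanishing argument (nor any reduction modulo the image of $B_2$) is needed.
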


\begin{proof}
By  Definition \ref{defn om}, we see that $\omega_{m,r}(p,p')=\Omega_{m,r}(p-p')=\Omega_{m,r}(s\wedge \frac{\alpha}{\alpha'}\wedge \beta).$ Let us compute the residue at $s=0$ of an expression of the type
$
\Omega_{m,r}(s\wedge e^{at^i}\wedge e^{bt^j}),
$
with $a, b \in k[[s]]$ and $i\geq m ,$  such that if $j=0,$ we use the convention in Proposition \ref{formula Omega}.  Since $$
\Omega_{m,r}(s\wedge e^{at^i}\wedge e^{bt^j})=jab\frac{ds}{s},
$$
when $i+j=r$ and is 0 otherwise, 
 by Proposition \ref{formula Omega}, we conclude that    its residue is equal to  $ja(0)b(0)$ if $i+j=r,$ and is 0 otherwise. Since, for $i \geq m,$  
 $$
 \ell_{m,r}(res_{(s)} (s\wedge e^{at^i}\wedge e^{bt^j}))=\ell _{m,r}(e^{a(0)t^i}\wedge e^{b(0)t^j})
 $$
 is equal to $ja(0)b(0)$ if $i+j=r,$ and is 0 otherwise, we conclude that 
 \begin{align}\label{eqn res om loc}
    res_{s=0}(\Omega_{m,r}(s\wedge e^{at^i}\wedge e^{bt^j}))= \ell_{m,r}(res_{(s)} (s\wedge e^{at^i}\wedge e^{bt^j})). 
 \end{align}
On the other hand, since $\alpha'|_{t^m}=\alpha|_{t^m},$ $s\wedge \frac{\alpha}{\alpha'}\wedge \beta$ is a sum of terms of the above type, and the linearity of both sides of (\ref{eqn res om loc}) imply that (\ref{eqn res om loc}) is also valid for $s\wedge \frac{\alpha}{\alpha'}\wedge \beta.$ By linearity of $\ell _{m,r}$ and $res_{(s)},$ we have  
$$
\ell_{m,r}(res_{(s)} (s\wedge \frac{\alpha}{\alpha'}\wedge \beta ))=\ell_{m,r}(res_{(s)} (s\wedge \alpha\wedge \beta))-\ell_{m,r}(res_{(s)} (s\wedge \alpha'\wedge \beta ) ),
$$
 which together with the above proves  the lemma. 
 \end{proof}

 Let us now try to prove the same formula when the choice of the uniformizer is not the same. In other words, with notation as above let $s' \in \pazocal{R}$ such that $s'|_{t^m}=s|_{t^m}.$ For simplicity, let us temporarily use the notation $(\pazocal{R},(s),(s'),(t^m))^{\times}:=(\pazocal{R},\pazocal{R},(s),(s'),id_{\pazocal{R}/(t^m)})^{\times}.$ Let $p':=s'\wedge\alpha \wedge \beta, $   $p:=s\wedge \alpha \wedge \beta $ and $(p,p'):=(s,s') \wedge (\alpha,\alpha)\wedge (\beta,\beta) \in \Lambda ^3 (\pazocal{R},(s),(s'),(t^m))^{\times}.$
  
  \begin{lemma}\label{lemma diff s}
   With notation as above, the residue of $\omega_{m,r}(p,p')$ at the closed point of $k'[[s]]$ is given by the following formula: 
   $$
   res_{s=0} \omega_{m,r}(p,p') = \ell_{m,r}(res_{(s)}(p))-\ell_{m,r}(res_{(s')}(p')).
   $$
  \end{lemma}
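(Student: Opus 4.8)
The plan is to reduce the case of two different uniformizers to the already-proven case of a single uniformizer (Lemma \ref{lemma same s}) by interpolating between $s$ and $s'$. First I would write $s' = s \cdot u$ for a unit $u \in \pazocal{R}^{\times}$ with $u|_{t^m} = 1$, i.e.\ $u \in 1+(t^m)$; this is possible since $s'|_{t^m}=s|_{t^m}$ and both are uniformizers. Then, using multilinearity of the wedge product, I would split
$$
(p,p') = (s,s')\wedge(\alpha,\alpha)\wedge(\beta,\beta) = (s,s)\wedge(\alpha,\alpha)\wedge(\beta,\beta) + (1,u)\wedge(\alpha,\alpha)\wedge(\beta,\beta),
$$
where the first summand is the ``same $s$'' situation handled by Lemma \ref{lemma same s} (with $\alpha'=\alpha$, giving residue $0$ on the $\omega_{m,r}$ side), and the second summand is a correction term of the form $\omega_{m,r}\big((1,u)\wedge(\alpha,\alpha)\wedge(\beta,\beta)\big)$ that I must show has residue exactly $\ell_{m,r}(res_{(s)}(p)) - \ell_{m,r}(res_{(s')}(p'))$. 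The right-hand side of the lemma can be rewritten the same way: since $res_{(s)}$ and $res_{(s')}$ agree on wedges not involving the uniformizer and differ by a controlled amount on $s\wedge\alpha\wedge\beta$ versus $s'\wedge\alpha\wedge\beta$, the difference $\ell_{m,r}(res_{(s)}(p)) - \ell_{m,r}(res_{(s')}(p'))$ reduces to a computation with the extra factor $u$.

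Concretely, I would first reduce to monomial generators: write $\alpha = e^{\sum a_i t^i}$ (times a unit in $k'[[s]]$), $\beta$ similarly, and $u = e^{w}$ with $w \in t^m k'[[s,t]]/(t^r)$, say $w = \sum_{m\le \ell < r} w_\ell t^\ell$. By multilinearity it then suffices to evaluate $\Omega_{m,r}$ and the residues on elements of the shape $s\wedge e^{at^i}\wedge e^{bt^j}$ and $e^{w_\ell t^\ell}\wedge e^{at^i}\wedge e^{bt^j}$ with $a,b \in k'[[s]]$. For the first type, equation (\ref{eqn res om loc}) from the proof of Lemma \ref{lemma same s} already gives what I need. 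For the second type, the only weight-$r$ contribution comes from $\ell + i + j = r$; applying Proposition \ref{formula Omega} with $x = \ell \ge m$, the value of $\Omega_{m,r}$ is $w_\ell(jb\,da - ia\,db)$, whose residue at $s=0$ matches the corresponding $\ell_{m,r}$ term. I would organize this as: $res_{s=0}\omega_{m,r}(p,p')$ equals $res_{s=0}\Omega_{m,r}(s\wedge\alpha\wedge\beta - s'\wedge\alpha\wedge\beta)$, expand $s' = su$, and bookkeep the finitely many monomial terms against the analogous expansion of $\ell_{m,r}(res_{(s)}(p)) - \ell_{m,r}(res_{(s')}(p'))$, which itself uses only the standard behaviour of the tame symbol/residue map under multiplying the uniformizer by a unit.

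The main obstacle I anticipate is purely bookkeeping: tracking how the residue maps $res_{(s)}$ and $res_{(s')}$ differ when one replaces $s$ by $s'=su$ with $u\in 1+(t^m)$, and checking that the extra cross-terms produced by $\Lambda^2\log^{\circ}$ inside the definition of $\ell_{m,r}$ (Definition \ref{defnlmr}) exactly cancel the extra terms produced by $\Omega_{m,r}$ on the correction wedge $(1,u)\wedge(\alpha,\alpha)\wedge(\beta,\beta)$. Because $u-1 \in (t^m)$ and $m < r < 2m$, all the ``doubly infinitesimal'' contributions (products of two factors each in $(t^m)$) vanish automatically, which is what makes the cancellation work; making this vanishing explicit and confirming the surviving linear-in-$u$ terms agree on both sides is the crux. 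Once that is verified on monomial generators, linearity of $\ell_{m,r}$, $res_{(s)}$, $res_{(s')}$ and $\Omega_{m,r}$ finishes the proof.
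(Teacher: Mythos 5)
Your overall strategy is the same as the paper's: reduce to monomial generators $e^{at^i}$, evaluate the left-hand side with Proposition \ref{formula Omega}, evaluate the right-hand side from Definition \ref{defnlmr}, and use $r<2m$ to kill every term that is quadratic in $(t^m)$. (The paper additionally telescopes, via a transitivity argument, to the case $s'=s+at^i$ for a single $i\geq m$; handling all of $\log(s'/s)$ at once, as you propose, is equivalent by linearity.) But there are two related problems. First, $u=s'/s=1+\tfrac{s'-s}{s}$ is \emph{not} a unit of $\pazocal{R}=k'[[s]]_r$ in general: $s'-s\in(t^m)$ has coefficients in $k'[[s]]$ that need not be divisible by $s$, so $u$ lies only in $\pazocal{R}_\eta^{\times}$. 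This is not cosmetic: it is precisely why $(s)$ and $(s')$ are genuinely different liftings $\tilde{x}\neq\tilde{x}'$ of the closed point, and why $res_{(s')}$ is not $res_{(s)}$ precomposed with a unit change of uniformizer. Your appeal to ``the standard behaviour of the tame symbol/residue map under multiplying the uniformizer by a unit'' is therefore the wrong mechanism; it would only cover the degenerate case $(s)=(s')$, where both sides of the lemma vanish. Relatedly, your decomposition $(p,p')=(s,s)\wedge\cdots+(1,u)\wedge\cdots$ leaves the group $\Lambda^3(\pazocal{R},(s),(s'),(t^m))^{\times}$ on which the two residue maps are defined, so the right-hand side of the lemma cannot be split along the same lines; only the $\Omega_{m,r}$ side decomposes this way.

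Second, and consequently, the crux of the proof is missing: one must compute $res_{(s')}(s'\wedge e^{bt^j}\wedge e^{ct^k})$ by reducing $b,c\in k'[[s]]$ modulo $(s')$ rather than $(s)$. Writing $s'=s+at^i$, one has $s\equiv -a(0)t^i$ modulo $(s')$ and $(t^r)$ (using $2i\geq 2m>r$), hence $e^{bt^j}\mapsto e^{b(0)t^j-a(0)b'(0)t^{i+j}}$, and it is these first-order Taylor corrections, fed through $\ell_{m,r}$, that produce $-a(0)\bigl(jb(0)c'(0)-kc(0)b'(0)\bigr)$, matching $res_{s=0}\Omega_{m,r}\bigl(e^{-\frac{a}{s}t^i}\wedge e^{bt^j}\wedge e^{ct^k}\bigr)$ computed from Proposition \ref{formula Omega}. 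Until that computation is carried out (and your sign in ``$w_\ell(jb\,da-ia\,db)$'' is checked against Proposition \ref{formula Omega}, which gives $w_\ell(ia\,db-jb\,da)$ for $e^{w_\ell t^\ell}\wedge e^{at^i}\wedge e^{bt^j}$), the identity you need is asserted rather than proved.
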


 \begin{proof}
If $s'' $ is another lift of the uniformizer $s,$ in other words $s'' \in \pazocal{R}$ with $s''|_{t^m}=s|_{t^m}$ then 
$$
   res_{s=0} \omega_{m,r}(p,p'')=   res_{s=0} \omega_{m,r}(p,p')+   res_{s=0} \omega_{m,r}(p',p'')
$$
and $ \ell_{m,r}(res_{(s)}(p))-\ell_{m,r}(res_{(s'')}(p''))=$
$$
\big(  \ell_{m,r}(res_{(s)}(p))-\ell_{m,r}(res_{(s')}(p'))\big)+\big( \ell_{m,r}(res_{(s')}(p'))-\ell_{m,r}(res_{(s'')}(p''))\big).
$$
Therefore in order to prove the lemma we may assume without loss of generality that $s'=s+at^i,$ with $a \in k'[[s]]$ and $m \leq i.$ Note that in $\pazocal{R},$ we have $s+at^i=se^{\frac{a}{s}t^i},$ since $r<2m.$ Letting $\alpha=e^{bt^j}$ and $\beta=e^{ct^k},$ we can rewrite $\omega_{m,r}(p,p')$ as $\Omega_{m,r}(p-p')=\Omega_{m,r}(e^{-\frac{a}{s}t^i}\wedge e^{bt^j} \wedge e^{ct^k})=$
$$
-\frac{a}{s}(jb\cdot dc-kc\cdot db)
$$
by Proposition \ref{formula Omega}, if $i+j+k=r$ and  0 otherwise. Its residue is 
\begin{align}\label{diff of residues formula in lemma}
-a(0)(jb(0)c'(0)-kc(0)b'(0))
\end{align}
if $i+j+k=r $ and 0 otherwise, with the usual conventions if $j$ or $k$ is 0.

On the other hand, $res_{(s)}(p)=e^{b(0)t^j}\wedge e^{c(0)t^k}$ and 
$$
res_{(s')}(p')=e^{b(0)t^j-a(0)b'(0)t^{i+j}}\wedge e^{c(0)t^k-a(0)c'(0)t^{i+k}}\in \Lambda ^2{k'_r}^{\times}.
$$
By the linearity of $\ell _{m,r},$ the right hand side of the expression in the statement of the lemma is then equal to 
$$
-\ell _{m,r}(e^{b(0)t^j}\wedge e^{-a(0)c'(0)t^{i+k}})-\ell_{m,r}(e^{-a(0)b'(0)t^{i+j}}\wedge e^{c(0)t^k})-\ell_{m,r}(e^{-a(0)b'(0)t^{i+j}}\wedge e^{-a(0)c'(0)t^{i+k}}).
$$
The last summand is equal to 0 since $\ell _{m,r}$ is of weight $r$ and $i+j+i+k\geq 2i\geq 2m>r.$ For the same reason, the first two summands are 0 if $i+j+k \neq r$
and if $i+j+k=r,$ then the total expression is equal to 
$
-a(0)c'(0)jb(0)+a(0)b'(0)kc(0),
$
which agrees with the formula (\ref{diff of residues formula in lemma}) for the residue of $\Omega_{m,r}$. 
Since $\alpha $ and $\beta $ are sums of the terms of the above type, this proves the lemma. 
\end{proof}

 \begin{proposition}\label{general residue prop}
 Suppose that $\pazocal{R},\, \pazocal{R}'/k_r$ are local algebras which are smooth of relative dimension 1 over $k_r$,  together with liftings $\tilde{x}, \, \tilde{x}'$ of their closed points and a $k_{m}$-isomorphism $\chi:\pazocal{R}/(t^m) \to \pazocal{R}'/(t^m)$ which maps the reductions of $\tilde{x}$ and $\tilde{x}'$ to each other. Then for $q \in \Lambda^3(\pazocal{R},\pazocal{R}',\tilde{x},\tilde{x}',\chi)^{\times},$ we have the following formula for the at the closed point $x,$
 $$
res_{x}\omega_{m,r}(q)=\ell _{m,r}(res_{\tilde{x}} (\Lambda ^3\pi_1) (q))-\ell _{m,r}(res_{\tilde{x}'} (\Lambda ^3\pi_2) (q)).
 $$
 \end{proposition}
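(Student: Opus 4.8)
The plan is to reduce the identity, in two steps, to the two explicit computations already made in Lemma \ref{lemma same s} and Lemma \ref{lemma diff s}, and then to deduce the general case by a telescoping argument. First I would pass to completions. As remarked at the beginning of this section, both $res_x\omega_{m,r}$ and the residue maps $res_{\tilde x},res_{\tilde x'}$ factor through the completions of $\pazocal{R}$ and $\pazocal{R}'$ at their closed points, so we may assume $\pazocal{R}$ and $\pazocal{R}'$ are complete; then $\underline{\pazocal{R}}\cong k'[[s]]$ and $\underline{\pazocal{R}}'\cong k'[[s']]$ with $k'=k(x)$ the common residue field (the isomorphism of residue fields being induced by $\chi$), and after choosing splittings $\pazocal{R}\cong\underline{\pazocal{R}}_r$, $\pazocal{R}'\cong\underline{\pazocal{R}}'_r$. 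Next I would fix a lifting $\tilde\chi\colon\pazocal{R}\to\pazocal{R}'$ of $\chi$ and transport the whole picture along it. On the left hand side this is legitimate by the very definition of $res_x\omega_{m,r}$ in (\ref{residue general diagram}); on the right hand side, the functoriality of the residue under isomorphisms together with the canonicity of the identification entering the definition of $\ell_{m,r}$ shows that replacing $q$ by $\Lambda^3\tilde\chi^*(q)$ and $\tilde x'$ by $\tilde x'':=\tilde\chi^{-1}(\tilde x')$ changes neither term. We are thus reduced to the following: $\pazocal{R}/k_r$ is complete local, smooth of relative dimension $1$, $\tilde x$ and $\tilde x''$ are two liftings of the closed point $x$ which agree modulo $(t^m)$, and $q\in\Lambda^3(\pazocal{R},\pazocal{R},\tilde x,\tilde x'',\mathrm{id})^\times$; we must show that $res_x\omega_{m,r,\sigma}(q)$ (independent of $\sigma$ by Corollary \ref{corollary indep res}) equals $\ell_{m,r}(res_{\tilde x}\Lambda^3\pi_1(q))-\ell_{m,r}(res_{\tilde x''}\Lambda^3\pi_2(q))$.

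Then I would choose uniformizers $\tilde s$ and $\tilde s''$ generating $\tilde x$ and $\tilde x''$; after rescaling them by units (which affects neither the liftings nor the residue maps) we may assume $\tilde s\equiv\tilde s''\pmod{(t^m)}$, so that $(\tilde s,\tilde s'')$ lies in our group. Since $(\pazocal{R},\tilde x)^\times=\langle\pazocal{R}^\times,\tilde s\rangle$ and $(\pazocal{R},\tilde x'')^\times=\langle\pazocal{R}^\times,\tilde s''\rangle$, the group $(\pazocal{R},\pazocal{R},\tilde x,\tilde x'',\mathrm{id})^\times$ is generated by $(\tilde s,\tilde s'')$ together with pairs $(u,u')$ with $u,u'\in\pazocal{R}^\times$ and $u\equiv u'\pmod{(t^m)}$. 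Using multilinearity of $\wedge$ and $(\tilde s,\tilde s'')\wedge(\tilde s,\tilde s'')=0$, the element $q$ is therefore a $\mathbb{Q}$-linear combination of elementary tensors of two kinds: (a) all three factors of the form $(u,u')$, and (b) one factor equal to $(\tilde s,\tilde s'')$ and the other two of the form $(u,u')$. By $\mathbb{Q}$-linearity of both sides it suffices to treat (a) and (b) separately.

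For a tensor of type (a) the right hand side vanishes, the residue of a wedge of units being $0$; and on the left, $s(q)=\Lambda^3\pi_1(q)-\Lambda^3\pi_2(q)$, after writing $u_i'=u_ie^{\mu_i}$ with $\mu_i\in(t^m)$ and expanding multilinearly, is a sum of elements of $I_{m,r}$ on which, by Proposition \ref{formula Omega}, $\Omega_{m,r}$ returns relative $1$-forms whose coefficients lie in $\underline{\pazocal{R}}$ and whose differential parts are logarithmic differentials of units of $\underline{\pazocal{R}}$ --- hence forms that are regular at $x$, with vanishing residue. For a tensor $q_b=(\tilde s,\tilde s'')\wedge(u_2,u_2')\wedge(u_3,u_3')$ of type (b) I would telescope
$$
s(q_b)=\tilde s\wedge u_2\wedge u_3-\tilde s''\wedge u_2'\wedge u_3'=\tilde s\wedge\tfrac{u_2}{u_2'}\wedge u_3+\tilde s\wedge u_2'\wedge\tfrac{u_3}{u_3'}+\tfrac{\tilde s}{\tilde s''}\wedge u_2'\wedge u_3',
$$
each summand lying in $I_{m,r}$ because $u_2/u_2'$, $u_3/u_3'$ and $\tilde s/\tilde s''$ lie in $1+(t^m)\pazocal{R}_\eta$. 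Lemma \ref{lemma same s} computes the residue of $\Omega_{m,r}$ on the first two summands (the uniformizer being $\tilde s$ on both liftings) and Lemma \ref{lemma diff s} on the third (the last two factors being $u_2',u_3'$ on both liftings); substituting these three residues and telescoping the resulting $\ell_{m,r}\circ res$ terms leaves exactly $\ell_{m,r}(res_{\tilde x}\Lambda^3\pi_1(q_b))-\ell_{m,r}(res_{\tilde x''}\Lambda^3\pi_2(q_b))$, which is the asserted formula.

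The main obstacle is, I expect, not the concluding telescoping but the two reductions: carrying out the passage to completions and the identification along $\tilde\chi$ in a way strictly compatible with the construction of $res_x\omega_{m,r}$ in \textsection\ref{subsection resdiue of different liftings} --- in particular keeping track of how $res_{\tilde x'}$ and the identification entering $\ell_{m,r}$ transform under $\tilde\chi$ --- and, in the decomposition step, verifying that the tensors of types (a) and (b) genuinely span $\Lambda^3(\pazocal{R},\pazocal{R},\tilde x,\tilde x'',\mathrm{id})^\times$ while each ``difference'' $s(\cdot)$ produced along the way actually lands in the domain $I_{m,r}$ of $\Omega_{m,r}$, so that Proposition \ref{formula Omega} and the two lemmas are directly applicable.
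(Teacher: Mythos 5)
Your proposal is correct and follows essentially the same route as the paper: pass to completions, identify $\pazocal{R}$ with $\pazocal{R}'$ via a lifting $\tilde{\chi}$ of $\chi$ sending one uniformizer to the other, reduce by multilinearity and antisymmetry to the case of three unit factors (where both sides vanish, the left because $\Omega_{m,r}$ produces a regular form) and the case of one uniformizer factor, and finish the latter by the same three-term telescoping handled by Lemma \ref{lemma same s} (twice) and Lemma \ref{lemma diff s} (once).
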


 \begin{proof}
 In order to prove the statement, we can replace $\pazocal{R}$ and $\pazocal{R}'$ with their completions at their closed points.  Therefore, without loss of generality, we will assume that $\pazocal{R}=\pazocal{R}'=k'[[s]]_r,$ $\tilde{x}$  is given by $s=0$ and $\tilde{x}'$ is given by $s'=0$ for  some $s' \in k'[[s]]_r$ with $s'|_{t^m}=s|_{t^m},$ and $\chi$ is the map which is identity on $k'[[s]]_{m}.$ 
 
 In order to make the computations we need to choose a lifting $\tilde{\chi}$ of $\chi$ from $\pazocal{R}$ to $\pazocal{R}'.$ We choose this lifting to be the one that sends $s$ to $s'$ and is identity on $k'.$ Note that $\tilde{\chi}$ being a map of $k_r$ algebras has to satisfy $\tilde{\chi}(t)=t.$  
 
 The statement above then reduces to the following: suppose that $\alpha, \beta, \, \gamma \in (k'[[s]]_r,(s))^{\times}$ and $\alpha', \beta', \, \gamma' \in (k'[[s]]_r,(s'))^{\times}$ such that $\alpha|_{t^m}=\alpha'|_{t^m},$ $\beta|_{t^m}=\beta'|_{t^m},$ and $\gamma|_{t^m}=\gamma'|_{t^m},$ and $p=\alpha \wedge \beta \wedge \gamma,$ $p'=\alpha' \wedge \beta' \wedge \gamma',$ and $(p,p')=(\alpha,\alpha') \wedge (\beta, \beta') \wedge (\gamma,\gamma'),$ then 
 $$
 res_{s=0}\Omega_{m,r}(p-p')=\ell _{m,r}(res_{(s)}(p))-\ell _{m,r}(res_{(s')}(p')).
 $$
 By assumption  $\alpha$ is of the form $us^{n}$  for some $u \in k'[[s]]_{r} ^{\times}$ and $n \in \mathbb{Z}.$ Similarly, $\alpha'$ is of the form $u's'^{n'},$ with $u' \in k'[[s]]_r ^{\times}.$ The condition that $\alpha|_{t^m}=\alpha'|_{t^m}$ implies that $n=n'.$ The same is true for $\beta ,\,\beta',$ and $\gamma, \, \gamma'.$ By multi-linearity and anti-symmetry, we reduce to checking the above identity in the following two cases: in the first case where $\alpha, \, \beta, \gamma \in k'[[s]]^{\times}$ and  in the second case where $\alpha=s,$ $\alpha'=s'$ and $\beta, \, \gamma \in k'[[s]]^{\times}.$  

If $\alpha, \, \beta, \gamma \in k'[[s]]^{\times},$ then $\alpha', \, \beta', \gamma' \in k'[[s]]^{\times}.$ This implies on the one hand that $res_{(s)}(p)=0$ and $res_{(s')}(p')=0,$ and on the other that $p-p' \in I_{m,r}=(1+(t^m)\otimes \Lambda ^2k'[[s]]_r ^{\times})\subseteq (\Lambda ^{3}k'[[s]]_r ^{\times})^{\circ},$  which implies  that $\Omega_{m,r}(p-p') \in \Omega^1 _{k'[[s]]/k}.$ Therefore $res_{s=0}\Omega_{m,r}(p-p')=0=\ell _{m,r}(res_{(s)}(p))-\ell _{m,r}(res_{(s')}(p'))$ in this case. 
 
Let us now consider the more interesting case of $\alpha=s,$ $\alpha'=s'$ and $\beta,\, \gamma, \,\beta', \,\gamma' \in k'[[s]]^{\times},$ with $\beta|_{t^m}=\beta'|_{t^m}$ and $\gamma|_{t^m}=\gamma'|_{t^m}.$  Applying Lemma \ref{lemma same s} first with $p=(s,\alpha,\beta)$ and  $p'=(s,\alpha',\beta)$ then with $p=(s,\alpha',\beta)$ and  $p'=(s,\alpha',\beta')$  and then applying Lemma \ref{lemma diff s} with $p=(s,\alpha',\beta')$  and $p'=(s',\alpha',\beta')$ and adding all the equalities finishes the proof of the proposition. 
 \end{proof}

\section{Construction of $\rho$ and a regulator on curves}

\subsection{Regulators on curves. } 
Let $\pazocal{R}/k_m$ be  smooth of relative dimension 1, as in the previous section but without the  assumption that $\underline{\pazocal{R}}$ is a discrete valuation ring.   Choose and fix  a lifting $\mathfrak{c}$ of $c$ to $\pazocal{R}$ for every closed point $c$ of $\underline{\pazocal{R}}$ as in the previous section.  We denote  the set of these liftings by $\mathcal{P}.$ We let $k(c)$ denote the residue field of $c$  and  $k(\mathfrak{c})$ denote the artin ring of regular functions on $\mathfrak{c}.$ Let $|\underline{\pazocal{R}}|=|\pazocal{R}|$ denote the set of closed points of $\underline {\pazocal{R}},$ or equivalently of $\pazocal{R}.$    Note that the reductions of the localizations $\pazocal{R}_{c}$ of $\pazocal{R}$ are discrete valuation rings. We let $$(\pazocal{R},\mathcal{P})^{\times}:=\bigcap_{c \in |\pazocal{R}| }(\pazocal{R}_{c },\mathfrak{c})^{\times}$$
and 
$(\pazocal{R},\mathcal{P})^{\flat}:=\{f \in (\pazocal{R},\mathcal{P})^{\times}|1-f \in (\pazocal{R},\mathcal{P})^{\times}  \}.$  We define $B_{2}(\pazocal{R},\mathcal{P})$ to be the vector space over $\mathbb{Q}$ generated by the symbols $[f]$ with $f \in (\pazocal{R},\mathcal{P})^{\flat}$ modulo the five term relations associated to pairs $f$ and $g$ in $(\pazocal{R},\mathcal{P})^{\flat}$ which have the property that $f-g\in (\pazocal{R},\mathcal{P})^{\times}.$  As usual we have  maps $B_{2}(\pazocal{R},\mathcal{P}) \to \Lambda ^2(\pazocal{R},\mathcal{P})^{\times} $ and $  B_{2}(\pazocal{R},\mathcal{P})\otimes(\pazocal{R},\mathcal{P})^{\times}\to \Lambda ^3(\pazocal{R},\mathcal{P})^{\times}.$ We also have a residue map  $res_{\mathfrak{c}}:B_{2}(\pazocal{R},\mathcal{P}) \otimes (\pazocal{R},\mathcal{P})^{\times}\to B_{2}(k(\mathfrak{c}))$ that is  defined exactly as in \cite[\textsection 3.3.1]{unv3} and  which gives a commutative diagram: 
$$
\xymatrix{
B_{2}(\pazocal{R},\mathcal{P}) \otimes (\pazocal{R},\mathcal{P})^{\times}\ar^{res_{\mathfrak{c}}}[d] \ar^{\;\;\;\;\;\;\;\delta}[r] & \Lambda ^3(\pazocal{R},\mathcal{P})^{\times} \ar^{res_{\mathfrak{c}}}[d] \\
B_{2}(k(\mathfrak{c})) \ar^{\delta}[r] & \Lambda ^{2}k(\mathfrak{c})^{\times}.
 }
$$

Suppose  that $C/k_m$ is a  smooth and projective curve. For every closed point $c $ of $\underline{C},$ choose and fix a smooth lifting of $\mathfrak{c}$ of  $c$ to $C.$ We denote $\mathcal{P}$ to be the set of these liftings.   We let $(\pazocal{O}_{C},\underline{\mathcal{P}}) ^{\times}$ denote the sheaf  on $\underline{C}$ which associates to an open set $U$ of $\underline {C},$ the group $(\pazocal{O}_{C}(U),\mathcal{P}|_{U})^{\times}.$ Similarly, $B_{2}(\pazocal{O}_C, \underline{\mathcal{P}})$ is the sheaf associated to the presheaf, which associates to $U$ the group $B_{2}(\pazocal{O}_{C}(U),\pazocal{P}|_{U}).$ For each $c \in |C|,$ let   $i_{c}$ denote  the imbedding of $c$ in $\underline{C}.$ The commutative diagram above gives us a  complex $\underline{\Gamma}'_{B}(C,\mathcal{P},3)$ of sheaves: 
$$
B_{2}(\pazocal{O}_C, \underline{\mathcal{P}}) \otimes (\pazocal{O}_{C},\underline{\mathcal{P}}) ^{\times}\to \oplus _{c \in |C|}i_{c*}(B_{2}(k(\mathfrak{c}))) \oplus \Lambda ^3 (\pazocal{O}_{C},\underline{\mathcal{P}}) ^{\times} \to  \oplus _{c \in |C|}i_{c*}( \Lambda ^2 k(\mathfrak{c}) ^{\times}),
$$
concentrated in degrees $[2,4].$ We use the following sign conventions in the above complex: the first map is $(\delta, res)$ and the second one is  $-\delta+res.$ We will be interested in the infinitesimal part of the  degree 3 cohomology ${\rm H}^{3} _{B}(C,\mathbb{Q}(3) ):={\rm H}^{3}(\underline{C},\underline{\Gamma}'_{B}(C,\mathcal{P},3) )$ of the complex $\underline{\Gamma}'_{B}(C,\mathcal{P},3).$ More precisely, we will be interested in defining regulator maps from ${\rm H}^{3} _{B}(C,\mathbb{Q}(3) )$ to $k$ for every $m<r<2m.$ 
 
The above cohomology group is a candidate for the motivic cohomology group ${\rm H}^{3}_{\pazocal{M}}(C,\mathbb{Q}(3)).$ To be more precise, we would expect a sheaf $B_{3}(\pazocal{O}_C, \underline{\mathcal{P}}) $ of Bloch groups of weight 3 as in \cite{config},
 which would fit into a complex   
$\underline{\Gamma}_{B}(C,\mathcal{P},3)$ of sheaves on $\underline{C}$: 
$$
B_{3}(\pazocal{O}_C, \underline{\mathcal{P}}) \to   B_{2}(\pazocal{O}_C, \underline{\mathcal{P}})  \otimes (\pazocal{O}_{C},\underline{\mathcal{P}}) ^{\times}\to \oplus i_{c*}(B_{2}(k(\mathfrak{c}))) \oplus \Lambda ^3 (\pazocal{O}_{C},\underline{\mathcal{P}}) ^{\times} \to  \oplus i_{c*}( \Lambda ^2 k(\mathfrak{c}) ^{\times}),
$$ 
and which would compute motivic cohomology of weight 3. 
Since we are only interested in  ${\rm H}^3(\underline{C},\underline{\Gamma}_{B}(C,\mathcal{P},3))$ and since on a curve, by Grothendieck's vanishing theorem, the  cohomology of any sheaf vanishes in degree greater than 1, we have an isomorphism  
$$
{\rm H}^3(\underline{C},\underline{\Gamma}'_{B}(C,\mathcal{P},3)) \simeq {\rm H}^3(\underline{C},\underline{\Gamma}_{B}(C,\mathcal{P},3)).
$$

For a sheaf of complexes $\mathcal{F}_{\bigcdot},$ let  $\check{\rm H}^{\bigcdot}(\underline{C},\mathcal{F}_{\bigcdot})$ denote the colimit of all the \v{C}ech cohomology groups over all Zariski covers of $\underline{C}.$ For a sheaf $\mathcal{F},$  the natural map $\check{\rm H}^{i}(\underline{C},\mathcal{F}) \to {\rm H}^{i}(\underline{C},\mathcal{F})$ is an isomorphism for $i=0, \, 1.  $ By  the same argument, it follows that the same is true for a complex of sheaves $\mathcal{F}_{\bigcdot}$ which is concentrated in degrees 0 and 1. This applied to the complex above implies that the natural map 
$$
\check{\rm H}^3(\underline{C},\underline{\Gamma}'_{B}(C,\mathcal{P},3))\simeq {\rm H}^3(\underline{C},\underline{\Gamma}'_{B}(C,\mathcal{P},3))
$$
is an isomorphism. Therefore, it is enough to construct the map $\check{\rm H}^3(\underline{C},\underline{\Gamma}'_{B}(C,\mathcal{P},3)) \to k.$ We will in fact construct the map as the composition
$$
\check{\rm H}^3(\underline{C},\underline{\Gamma}'_{B}(C,\mathcal{P},3)) \hookrightarrow \check{\rm H}^3(\underline{C},\underline{\Gamma}''_{B}(C,\mathcal{P},3)) \to k,
$$
where $\underline{\Gamma}''_{B}(C,\mathcal{P},3)$ is the quotient complex:
$$
 B_{2}(\pazocal{O}_C, \underline{\mathcal{P}}) \otimes (\pazocal{O}_{C},\underline{\mathcal{P}}) ^{\times}\to \oplus _{c \in |C|}i_{c*}(B_{2}(k(\mathfrak{c}))) \oplus \Lambda ^3 (\pazocal{O}_{C},\underline{\mathcal{P}}) ^{\times}
$$
of  $\underline{\Gamma}'_{B}(C,\mathcal{P},3).$

Suppose that we are given a Zariski open cover $U_{\bigcdot}$ of $\underline{C}$, we will   define a map from the corresponding cocycle group 
$\check{{\rm Z}} ^{3}(U_{\bigcdot}, \underline{\Gamma}''_{B}(C,\mathcal{P},3))$ to $k,$ which will vanish on the coboundaries and hence induce the map in the cohomology group that we are looking for.  
Suppose that we start with a cocyle as above, given by the data:  

(i) $\gamma_{i} \in \Lambda ^3 (\pazocal{O}_{C},\underline{\mathcal{P}}) ^{\times}(U_i),$ for all $i \in I.$ 

(ii) $\varepsilon_{i,c} \in B_{2}(k(\mathfrak{c})) $ for every $c \in U_{i}$ all but finitely many of which are 0, for all $i \in I$

(iii) $\beta_{ij} \in  (B_{2}(\pazocal{O}_C, \underline{\mathcal{P}}) \otimes (\pazocal{O}_{C},\underline{\mathcal{P}}) ^{\times})(U_{ij}),$ for all  $i, \, j \in I$

These data are supposed to satisfy the following  properties:

(i) $\delta(\beta_{ij})=\gamma_{j}|_{U_{ij}}-\gamma_i|_{U_{ij}},$ 

(ii) $res_{\mathfrak{c}}(\beta_{ij})=\varepsilon_{j,c}-\varepsilon_{i,c},$ for $c \in U_{ij} ,$

(iii) $\beta_{jk}|_{U_{ijk}}-\beta_{ik}|_{U_{ijk}}+\beta_{ij}|_{U_{ijk}}=0.$

We  will construct the image of the above element by making several choices and then proving that the construction is independent of all the choices.

(i) Let $\tilde{\pazocal{A}}_{\eta}/k_{\infty}$ be a smooth lifting of $\pazocal{O}_{C,\eta}$ and for every $c \in |C|,$ let $\tilde{\pazocal{A}}_{c}/k_{\infty}$ be a smooth lifting of the completion $\hat{\pazocal{O}}_{C,c}$ of the local ring of $C$ at $c,$ together with a smooth lifting $\tilde{\mathfrak{c}}$ of $\mathfrak{c}$ as in the previous section. 
Moreover, choose:

(ii) an arbitrary $i \in I$ and for each $c$ choose a $j_{c} \in I$ such that $c \in U_{j_c}$ 

(iii) an arbitrary lifting $\tilde{\gamma}_{i\eta} \in \Lambda^3\tilde{\pazocal{A}}_{\eta} ^{\times}$  of the germ $\gamma _{i\eta} \in \Lambda ^3\pazocal{O}_{C,\eta}^{\times}$  of $\gamma_i$ at the generic point $\eta$ 

(iii) a good lifting $\tilde{\gamma}_{j_c} \in \Lambda ^3(\tilde{\pazocal{A}}_c,\tilde{\mathfrak{c}})^{\times}$  of the image $\hat{\gamma}_{j_c,c}$ of  $\gamma_{j_c}$ in $\Lambda^3(\hat{\pazocal{O}}_{C,c},\mathfrak{c})^{\times},$ for every $c \in |C|,$  

(iv) an arbitrary lifting   $\tilde{\beta}_{j_c i,\eta} \in B_{2}(\tilde{\pazocal{A}}_{\eta})\otimes \tilde{\pazocal{A}}_{\eta}$  of the image  $\beta_{j_c i,\eta}\in B_{2}(\pazocal{O}_{C,\eta})\otimes \pazocal{O}_{C,\eta} ^{\times}$ of   $\beta_{j_c i},$ for every $c \in |C|.$    
 
Note that it does not make sense to require that $\tilde{\gamma}_{i\eta}$  be a good lifting since in this context there is no  a fixed specialization of the generic point. Similarly,  we cannot require that $\tilde{\beta}_{j_c i,\eta}$ be a good lifting,  since we know that $\delta(\tilde{\beta}_{j_c i,\eta})$ is a lifting of  $\delta(\beta_{j_c i,\eta})=\gamma_{i} -\gamma_{j_c}$ and even  this last expression need not be good at $c$ as $\gamma_{i}$ need not be good at $c.$

We define the value of the regulator $\rho_{m,r}$ on the above element by the expression
\begin{align}\label{reg formula in general}
   \sum _{c \in |C|}{\rm Tr}_{k}\big(\ell _{m,r}(res_{\tilde{\mathfrak{c}}}\tilde{\gamma}_{j_c})-\ell i _{m,r} (\varepsilon_{j_c,c})+res_{c}\omega_{m,r}(\tilde{\gamma}_{i\eta}-\delta(\tilde{\beta}_{j_ci,\eta}),\tilde{\gamma}_{j_c})\big). 
\end{align}
Let us first explain what we mean by the above expression. Since $\tilde{\gamma}_{j_c}$ is $\tilde{\mathfrak{c}}$-good, the residue $res_{\tilde{\mathfrak{c}}}\tilde{\gamma}_{j_c}$ along $\tilde{\mathfrak{c}}$  is defined as an element of $\Lambda ^{2}k(\tilde{\mathfrak{c}})^{\times}.$  The  \'{e}taleness of $\tilde{\mathfrak{c}}$ over $k_{\infty},$ implies that we have a canonical isomorphism $k(\tilde{\mathfrak{c}})\simeq k(c)_{\infty}$ of $k_{\infty}$-algebras. Using this isomorphism and the map $\ell _{m,r}:\Lambda ^2 k(c)_{\infty} ^{\times} \to k(c)$ in Definition \ref{defnlmr}, we obtain $\ell _{m,r}(res_{\tilde{\mathfrak{c}}}\tilde{\gamma}_{j_c}) \in k(c).$ For the second term,  note that, as above,  there is a canonical isomorphism $k(\mathfrak{c})\simeq k(c)_{m}$ of $k_m$ algebras using which we can view $\varepsilon_{j_c,c}\in B_{2}(k(c)_{m}).$ Applying $\ell i_{m,r}:B_{2}(k(c)_m)\to k(c)$ to this element gives $\ell i_{m,r}(\varepsilon_{j_c,c})\in k(c).$ For the last term, note that $\tilde{\gamma}_{i\eta}-\delta(\tilde{\beta}_{j_ci,\eta})$ is a lifting of $\gamma_{i\eta}-\delta(\beta_{j_ci,\eta})=\gamma_{j_c}$ to $\Lambda ^{3}\tilde{\pazocal{A}}_{\eta} ^{\times}$ and so is $\tilde{\gamma}_{j_c}$ a lifting of $\gamma_{j_c}$ to $\Lambda ^3\tilde{\pazocal{A}}_{c} ^{\times}.$  Using the theory of \textsection \ref{subsection resdiue of different liftings}, we see that the last term 
$res_{c}\omega_{m,r}(\tilde{\gamma}_{i\eta}-\delta(\tilde{\beta}_{j_ci,\eta}),\tilde{\gamma}_{j_c})\in k(c)$ is unambiguously defined. Letting  ${\rm Tr}_k$ denote the normalized trace to $k,$ the summands above are defined.  

In order to show that the sum makes sense, we also need to show that the sum is finite. Below we will show that the sum is independent of all the choices, therefore it will be enough to show that the sum is finite for a particular choice. 
First by shrinking $U_i$ if necessary, and choosing a refinement of the cover, we will assume that $\gamma_{i} \in \Lambda ^3 \pazocal{O}_{C} ^{\times}(U_i).$ Similarly, by shrinking $U_i$ even further, we will assume that the lifting $\tilde{\gamma}_i$ is good on $U_i.$  Therefore, for $c \in U_i,$ we can choose $j_c=i$ and $\tilde{\gamma}_{j_c}=\tilde{\gamma}_i.$ Since for these c,  $\beta_{j_ci}=0$ we can choose $\tilde{\beta}_{j_ci}=0.$ In order to show that the sum in (\ref{reg formula in general}) is finite, we can concentrate on $c \in U_i,$ as $|C|\setminus|U_i|$ is finite. For $c \in U_i,$ $res_{c}\tilde{\gamma}_{j_c}=res_{c}\tilde{\gamma}_{i}=0,$ since $\gamma_{i} $ is invertible on $U_i$ by assumption. Also for the residues we have  $res_{c}\omega_{m,r}(\tilde{\gamma}_{i\eta}-\delta(\tilde{\beta}_{j_ci,\eta}),\tilde{\gamma}_{j_c})=res_{c}\omega_{m,r}(\tilde{\gamma}_{i},\tilde{\gamma}_{i})=0$ since $i=j_c,$ $\tilde{\gamma}_{j_{c}}=\tilde{\gamma}_{i}$ and $\tilde{\beta}_{j_ci}=0.$ Therefore the summand, for $c \in U_i,$ is equal to  ${\rm Tr}_k(-\ell i_{m,r}(\varepsilon_{j_c,c}))={\rm Tr}_k(-\ell i_{m,r}(\varepsilon_{i,c})).$ Since $\varepsilon_{i,c}=0,$ for all but a finite number of $c \in U_i,$ we are done.

We  now show that the expression makes sense and is independent of all the choices. Note that there are many of them.

\begin{theorem}\label{mainthm}
For every $m<r<2m,$ the above formula (\ref{reg formula in general})  gives a well-defined regulator map 
$
\rho_{m,r}: \check{{\rm Z}} ^{3}(U_{\bigcdot}, \underline{\Gamma}''_{B}(C,\mathcal{P},3)) \to k,$  independent of all the choices.  This map vanishes on the coboundaries and hence induces the regulator map 
$$
\rho_{m,r}: {\rm H}^{3}_{B}(C,\mathbb{Q}(3))\to k
$$
of $\star$-weight $r.$ 
\end{theorem}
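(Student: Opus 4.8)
The plan is to prove that the expression (\ref{reg formula in general}) depends only on the cohomology class of the input cocycle, by verifying in turn: (a) independence of the auxiliary liftings chosen in steps (i)--(iv); (b) independence of the combinatorial choices of $i$ and $j_c$; (c) vanishing on coboundaries; and finally (d) the $\star$-weight assertion. Throughout, the key technical inputs are the well-definedness and boundary-vanishing of $res_x\omega_{m,r}$ established in \textsection\ref{subsection resdiue of different liftings} (Proposition \ref{value of omega on a boundary}, Corollary \ref{corollary value of omega on a boundary}), the residue computation Proposition \ref{general residue prop} expressing $res_x\omega_{m,r}$ on good liftings in terms of $\ell_{m,r}$ of ordinary residues, and the compatibility of $\delta$ with $res_{\mathfrak c}$ recorded in the commutative square of \textsection 7.

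First I would fix the cocycle and all choices as in the construction, and vary one choice at a time. If the generic lifting $\tilde\gamma_{i\eta}$ is replaced by another lifting, the two differ by an element of $\Lambda^3(\tilde{\pazocal A}_\eta,(t))^\times$ lying over $1$; since $\tilde\gamma_{i\eta}-\delta(\tilde\beta_{j_ci,\eta})$ enters $res_c\omega_{m,r}$ only through its image as a lifting of $\gamma_{j_c}$, and the map $res_c\omega_{m,r}$ is, by its very construction via $\omega_{m,r,\sigma}$ and Proposition \ref{main comp residue}, insensitive to the chosen lifting of $\chi$ and the splitting, the value is unchanged --- but one must check carefully that changing $\tilde\gamma_{i\eta}$ changes the first argument of $res_c\omega_{m,r}$ by something in the kernel handled by that proposition. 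Replacing $\tilde\beta_{j_ci,\eta}$ by another lifting changes the first argument by $\delta$ of an element of $(B_2\otimes(\,)^\times)^\circ$, so Corollary \ref{corollary value of omega on a boundary} gives invariance. Replacing the good lifting $\tilde\gamma_{j_c}$ by another good lifting changes both $res_{\tilde{\mathfrak c}}\tilde\gamma_{j_c}$ and the second argument of $res_c\omega_{m,r}$; here Proposition \ref{general residue prop} is exactly what reconciles the two: the change in $\ell_{m,r}(res_{\tilde{\mathfrak c}}\tilde\gamma_{j_c})$ is cancelled by the change in $res_c\omega_{m,r}(\,\cdot\,,\tilde\gamma_{j_c})$. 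Independence of the choice of $\tilde{\pazocal A}_\eta$, $\tilde{\pazocal A}_c$, $\tilde{\mathfrak c}$ follows by functoriality of all the maps involved under isomorphisms of smooth liftings that reduce to the identity.

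Next I would treat the combinatorial choices. Changing $i$ to $i'$: using the cocycle relation $\delta(\beta_{ii'})=\gamma_{i'}-\gamma_i$ and $res_{\mathfrak c}(\beta_{ii'})=\varepsilon_{i',c}-\varepsilon_{i,c}$, I would show the per-point summand changes by a term that telescopes; the third term changes via $\tilde\gamma_{i'\eta}-\delta(\tilde\beta_{j_ci',\eta})$ versus $\tilde\gamma_{i\eta}-\delta(\tilde\beta_{j_ci,\eta})$, whose difference is $\delta$ of a lifting of $\beta_{j_ci'}-\beta_{j_ci}=-\beta_{ii'}$ (by the cocycle condition (iii)), again killed by Corollary \ref{corollary value of omega on a boundary} after accounting for the matching change $\ell i_{m,r}(\varepsilon_{i',c})-\ell i_{m,r}(\varepsilon_{i,c})$ via the compatibility of $res_{\mathfrak c}$ with $\delta$ and the identity $\ell i_{m,r}=\ell_{m,r}\circ\delta$ from Definition \ref{defnlmr}. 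Changing $j_c$ to $j_c'$ with $c\in U_{j_c'}$ is handled identically using $\beta_{j_cj_c'}$. For vanishing on coboundaries one writes a typical coboundary of a $2$-cochain $(\mu_i\in(B_2(\,)\otimes(\,)^\times)(U_i))$ and checks, term by term, that each of the three summands in (\ref{reg formula in general}) becomes either $\ell_{m,r}\circ res\circ\delta$ or $res_c\omega_{m,r}\circ\delta$ applied to the lift of $\mu$, all of which vanish by the boundary-vanishing statements; finiteness was already argued in the text for a convenient choice, hence holds in general. Finally, the $\star$-weight $r$ statement: the $\star$-action on $k_\infty$ induces one on all liftings, and $\ell_{m,r}$, $\ell i_{m,r}$, $\omega_{m,r}$ are each of $\star$-weight $r$ by Lemma \ref{lemma-descent-to-m}, Proposition \ref{wrongmap} and Definition \ref{defn om}; taking residues and traces preserves the weight, so $\rho_{m,r}$ is homogeneous of weight $r$.

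The main obstacle I anticipate is step (a) for the change of the good lifting $\tilde\gamma_{j_c}$ combined with the change of the generic lifting: one has to show that the two correction terms --- the change in $\ell_{m,r}(res_{\tilde{\mathfrak c}}\tilde\gamma_{j_c})$ on one side, and the change in $res_c\omega_{m,r}$ coming from feeding a different second argument on the other --- are genuinely equal and opposite, rather than merely both lying in some ambiguity group. This is precisely where Proposition \ref{general residue prop} must be invoked in its full strength (with $\pazocal R\ne\pazocal R'$ and a nontrivial $\chi$), together with the observation that the first argument $\tilde\gamma_{i\eta}-\delta(\tilde\beta_{j_ci,\eta})$ and the second argument $\tilde\gamma_{j_c}$ are two liftings of the \emph{same} element $\gamma_{j_c}$ of $\Lambda^3(\hat{\pazocal O}_{C,c},\mathfrak c)^\times$, so that $res_c\omega_{m,r}$ of the pair makes sense and transforms the way the formula in Proposition \ref{general residue prop} predicts. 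Once that bookkeeping is done carefully the remaining verifications are routine telescoping arguments using the cocycle identities (i)--(iii) and the compatibility diagram for $res_{\mathfrak c}$.
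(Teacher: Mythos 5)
Your overall architecture --- vary one choice at a time, use Proposition \ref{general residue prop} to reconcile the change of the good local lifting $\tilde\gamma_{j_c}$ with the change of $\ell_{m,r}(res_{\tilde{\mathfrak c}}\tilde\gamma_{j_c})$, use Corollary \ref{corollary value of omega on a boundary} for changes of the $\tilde\beta$'s, telescope with the cocycle identities, and reduce the vanishing on coboundaries to $\ell i_{m,r}=\ell_{m,r}\circ\delta$ together with the commutative square for $res_{\mathfrak c}$ --- coincides with the paper's proof as far as the \emph{local} choices are concerned (the choice of $j_c$, of $\tilde{\pazocal{A}}_c$, $\tilde{\mathfrak c}$, of the good lifting $\tilde\gamma_{j_c}$, and of $\tilde\beta_{j_ci}$), and your coboundary argument is essentially the paper's.

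There is, however, a genuine gap in your treatment of the \emph{generic} choices: the index $i$, the generic lifting $\tilde{\pazocal{A}}_\eta$, and above all the arbitrary lifting $\tilde\gamma_{i\eta}$. When $i$ is replaced by $i'$ (or $\tilde\gamma_{i\eta}$ by another lifting), the terms $\ell_{m,r}(res_{\tilde{\mathfrak c}}\tilde\gamma_{j_c})$ and $\ell i_{m,r}(\varepsilon_{j_c,c})$ do not move at all, since they depend only on $j_c$; so there is no ``matching change $\ell i_{m,r}(\varepsilon_{i',c})-\ell i_{m,r}(\varepsilon_{i,c})$'' to telescope against, and the entire change sits in the third term. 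After using Corollary \ref{corollary value of omega on a boundary} to trade $\delta(\tilde\beta'_{j_ci'}-\tilde\beta_{j_ci})$ for $\delta(\tilde\beta_{ii'})$, what remains at each $c$ is $res_c\omega_{m,r}(\tilde\gamma'_{i'\eta}-\tilde\gamma_{i\eta},\delta(\tilde\beta_{ii'}))$. This is \emph{not} killed by Corollary \ref{corollary value of omega on a boundary}: that corollary applies only when both entries of the pair are $\delta$-images of elements of the relative group $B_2\otimes(\,\cdot\,)^{\times}$, whereas $\tilde\gamma'_{i'\eta}-\tilde\gamma_{i\eta}$ is merely \emph{some} lifting of $\delta(\beta_{ii'})$, not a priori a boundary; the individual local contributions genuinely need not vanish. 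What the paper does instead is choose a splitting of $\tilde{\pazocal{A}}_\eta$, identify the total change with $\sum_{c}{\rm Tr}_k\, res_c$ of the single global meromorphic $1$-form $\Omega_{m,r}(\tilde\gamma'_{i'\eta}-\tilde\gamma_{i\eta}-\delta(\tilde\beta_{ii'}))$ on $\underline{C}$, and invoke the residue theorem: the sum of residues of a meromorphic $1$-form on a projective curve is zero. This is the one genuinely global input of the whole construction (the infinitesimal analogue of Stokes' theorem in the complex picture) and the only place where projectivity of $C$ enters; without it the independence of the generic choices, and hence the well-definedness of $\rho_{m,r}$, does not follow.
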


\begin{proof} We first show the independence of the definition from the various choices. For readability, we separate these into parts.

{\it Independence of the choice of $j_c$ and the liftings $\tilde{\beta}_{j_ci}$ and $\tilde{\gamma}_{j_c}.$}  Suppose that we choose a  different $j_c'$ with $c \in U_{j_c'};$ a different lifting $\tilde{\pazocal{A}}'_{c}$ of $\hat{\pazocal{O}}_{C,c},$ together with $\tilde{\mathfrak{c}}'$ as above; a $\tilde{\mathfrak{c}}'$-good lifting $\tilde{\gamma}'_{j_c'}$ of $\gamma_{j_c'}$ to $\tilde{\pazocal{A}}_{c}';$  
and a lifting $\tilde{\beta}'_{j'_ci}$ of 
$\beta_{j'_ci}$ to $\tilde{\pazocal{A}}_{\eta}.$
Since $\tilde{\pazocal{A}}_{c} \simeq k(c)[[\tilde{\mathfrak{s}}]]_{\infty},$ where $\tilde{\mathfrak{s}}$ is a choice of a  uniformizer associated to $\tilde{\mathfrak{c}}$ and similarly for $\tilde{\pazocal{A}}_{c}',$ we choose and fix a $k_{\infty}$-algebra isomorphism between $\tilde{\pazocal{A}}_{c}$ and $\tilde{\pazocal{A}}_{c}'$ which is identity modulo $(t^m)$ and which sends $\tilde{\mathfrak{s}}$ to $\tilde{\mathfrak{s}}'.$ This last condition is possible to impose since both $\tilde{\mathfrak{s}}$ and $\tilde{\mathfrak{s}}'$ lift $\mathfrak{s}$ by assumption. Below we  identify these two algebras using this isomorphism. 

 We need to compare the two expressions 
\begin{align}\label{reg indep loc ex 1}
   \ell _{m,r}(res_{\tilde{\mathfrak{c}}}\tilde{\gamma}_{j_c})-\ell i _{m,r} (\varepsilon_{j_c,c})+res_{c}\omega_{m,r}(\tilde{\gamma}_{i\eta}-\delta(\tilde{\beta}_{j_ci}),\tilde{\gamma}_{j_c}) 
\end{align}
and 
\begin{align}\label{reg indep loc ex 2}
 \ell _{m,r}(res_{\tilde{\mathfrak{c}}'}\tilde{\gamma}'_{j'_c})-\ell i _{m,r} (\varepsilon_{j_c',c})+res_{c}\omega_{m,r}(\tilde{\gamma}_{i\eta}-\delta(\tilde{\beta'}_{j'_ci}),\tilde{\gamma}'_{j'_c}).   
\end{align}

By linearity we have 
$$
res_{c}\omega_{m,r}(\tilde{\gamma}_{i\eta}-\delta(\tilde{\beta'}_{j'_ci}),\tilde{\gamma}'_{j'_c})-res_{c}\omega_{m,r}(\tilde{\gamma}_{i\eta}-\delta(\tilde{\beta}_{j_ci}),\tilde{\gamma}_{j_c}) =res_{c}\omega_{m,r}(\tilde{\gamma}_{j_c}-\tilde{\gamma}'_{j'_c},\delta(\tilde{\beta'}_{j'_ci}-\tilde{\beta}_{j_ci})). 
$$
Let $\tilde{\beta}_{j_c ' j_c}$ be a $\tilde{\mathfrak{c}}$-good lifting of $\beta_{j_c'j_c}$ to $\tilde{\pazocal{A}}_{c}.$  Since $\beta_{j_c'j_c}$ itself is $\mathfrak{c}$-good such a lifting exists.    We have the identity $\beta_{j_c'j_c}=\beta_{j_c'i}-\beta_{j_c i}$ on $U_{ij_c j_c '},$ which might not contain $c,$ but does of course contain the generic point $\eta.$ We deduce that 
$\tilde{\beta}_{j_c ' j_c, \eta}$ and $\tilde{\beta'}_{j'_ci,\eta}-\tilde{\beta}_{j_ci,\eta}$ have the same reduction $\beta_{j_c'j_c,\eta}.$ Now by Corollary \ref{corollary value of omega on a boundary},  we conclude that 
$$
res_{c}\omega_{m,r}(\delta(\tilde{\beta}_{j_c ' j_c, \eta}),\delta(\tilde{\beta'}_{j'_ci,\eta}-\tilde{\beta}_{j_ci,\eta}))=0.
$$
This implies that, using  transitivity and linearity, we have:  
$$
res_{c}\omega_{m,r}(\tilde{\gamma}_{j_c}-\tilde{\gamma}'_{j'_c},\delta(\tilde{\beta'}_{j'_ci}-\tilde{\beta}_{j_ci}))=res_{c}\omega_{m,r}(\tilde{\gamma}_{j_c}-\tilde{\gamma}'_{j'_c},\delta(\tilde{\beta}_{j_c ' j_c, \eta}))=res_{c}\omega_{m,r}(\tilde{\gamma}_{j_c}-\delta(\tilde{\beta}_{j_c ' j_c, \eta}),\tilde{\gamma}'_{j'_c}).
$$
In this expression,   $\tilde{\gamma}_{j_c}-\delta(\tilde{\beta}_{j_c ' j_c})$ is a $\tilde{\mathfrak{c}}$-good lifting to $\tilde{\pazocal{A}}_{c}$ and $\tilde{\gamma}'_{j'_c}$ is a $\tilde{\mathfrak{c}}'$-good lifting to $\tilde{\pazocal{A}}_{c}'.$   Then Proposition \ref{general residue prop} implies that 
\begin{align}\label{regulator indep loc}
res_{c}\omega_{m,r}(\tilde{\gamma}_{j_c}-\delta(\tilde{\beta}_{j_c ' j_c}),\tilde{\gamma}'_{j'_c})=\ell_{m,r}(res_{\tilde{\mathfrak{c}}}(\tilde{\gamma}_{j_c}-\delta(\tilde{\beta}_{j_c ' j_c})))-\ell _{m,r}(res_{\tilde{\mathfrak{c}}'}(\tilde{\gamma}'_{j'_c})).
\end{align}
On the other hand, 
$$
\ell _{m,r}(res_{\tilde{\mathfrak{c}}}(\delta(\tilde{\beta}_{j_c ' j_c})))=\ell _{m,r}(\delta(res_{\tilde{\mathfrak{c}}}(\tilde{\beta}_{j_c ' j_c}) ) )=\ell i_{m,r}(res_{c}(\beta_{j_c ' j_c})),
$$
by the definition of $\ell i_{m,r}.$ Since by assumption $res_c(\beta_{j_c ' j_c})=\varepsilon_{j_c,c}-\varepsilon_{j_c',c},$ we can rewrite the right hand side of (\ref{regulator indep loc}) as 
$$
\ell_{m,r}(res_{\tilde{\mathfrak{c}}}(\tilde{\gamma}_{j_c}))- \ell_{m,r}(res_{\tilde{\mathfrak{c}}'}(\tilde{\gamma}'_{j'_c}))-\ell i_{m,r}(\varepsilon_{j_c,c})
+\ell i_{m,r}(\varepsilon_{j_c',c}).$$ 
Combining all of the above, we see that the last expression is equal to the difference 
$$
res_{c}\omega_{m,r}(\tilde{\gamma}_{i\eta}-\delta(\tilde{\beta'}_{j'_ci}),\tilde{\gamma}'_{j'_c})-res_{c}\omega_{m,r}(\tilde{\gamma}_{i\eta}-\delta(\tilde{\beta}_{j_ci}),\tilde{\gamma}_{j_c}),
$$
which implies the equality of the two expressions (\ref{reg indep loc ex 1}) and (\ref{reg indep loc ex 2}) and thus proves the independence we were looking for. 

{\it Independence of the choice of $i$ and the liftings $\tilde{\gamma}_{i\eta}$ and $\tilde{\beta} _{j_c i}.$} 
Let us choose an $i',$ a lifting $\tilde{\pazocal{A}}'_{\eta}$ of $\pazocal{O}_{C,\eta}$  and liftings $\tilde{\gamma}'_{i'\eta}$ and $\tilde{\beta}' _{j_c i'}$ to $\tilde{\pazocal{A}}'_{\eta},$ for each $c \in |C|.$ 

We need to compare 

\begin{align}\label{reg global indep ex 1}
    \sum _{c \in |C|}{\rm Tr}_k\big(\ell _{m,r}(res_{\tilde{\mathfrak{c}}}\tilde{\gamma}_{j_c})-\ell i _{m,r} (\varepsilon_{j_c,c})+res_{c}\omega_{m,r}(\tilde{\gamma}_{i\eta}-\delta(\tilde{\beta}_{j_ci}),\tilde{\gamma}_{j_c})\big)\end{align}
and 
\begin{align}\label{reg global indep ex 2}
    \sum _{c \in |C|}{\rm Tr}_k\big(\ell _{m,r}(res_{\tilde{\mathfrak{c}}}\tilde{\gamma}_{j_c})-\ell i _{m,r} (\varepsilon_{j_c,c})+res_{c}\omega_{m,r}(\tilde{\gamma}'_{i'\eta}-\delta(\tilde{\beta}' _{j_c i'}),\tilde{\gamma}_{j_c})\big).
    \end{align}

 The difference between the above expressions is  
 $$
  \sum _{c \in |C|}{\rm Tr}_kres_{c}\omega_{m,r}(\tilde{\gamma}'_{i'\eta}-\delta(\tilde{\beta}' _{j_c i'}),\tilde{\gamma}_{i\eta}-\delta(\tilde{\beta}_{j_ci})).
 $$
 Choosing an isomorphism $\tilde{\pazocal{A}}_{\eta}\simeq \tilde{\pazocal{A}}'_{\eta}$ of $k_{\infty}$-algebras which lifts the given one modulo $(t^m),$ we identify $\tilde{\pazocal{A}}_{\eta}$ and  $\tilde{\pazocal{A}}'_{\eta}.$ The above sum can then be rewritten as: 
 $$
 \sum _{c \in |C|}{\rm Tr}_kres_{c}\omega_{m,r}(\tilde{\gamma}'_{i'\eta}-\tilde{\gamma}_{i\eta},\delta(\tilde{\beta}' _{j_c i'}-\tilde{\beta}_{j_ci})).
 $$
As in the above argument since $\tilde{\beta}_{ii'}$ has the same reduction modulo $(t^m)$ as $\tilde{\beta}' _{j_c i'}-\tilde{\beta}_{j_ci}$ for any $j_c,$ we have $res_{c}\omega_{m,r}(\delta(\tilde{\beta}' _{j_c i'}-\tilde{\beta}_{j_ci}),\delta(\tilde{\beta}_{ii'}))=0$ by Corollary \ref{corollary value of omega on a boundary}. So  we can rewrite the above sum as: 
$$
\sum _{c \in |C|}{\rm Tr}_kres_{c}\omega_{m,r}(\tilde{\gamma}'_{i'\eta}-\tilde{\gamma}_{i\eta},\delta(\tilde{\beta}_{ii'})).
$$
 Choosing a  splitting of $\tilde{\pazocal{A}}_{\eta},$ we identify this algebra with $(\underline{\tilde{\pazocal{A}}}_{\eta})_{\infty}=(\pazocal{O}_{\underline{C},\eta})_{\infty}.$ Using this identification,  the last expression is the sum of residues of the  meromorphic 1-form $\Omega_{m,r}(\tilde{\gamma}'_{i'\eta}-\tilde{\gamma}_{i\eta}-\delta(\tilde{\beta}_{ii'}))$  on $\underline{C}$ and therefore is equal to 0.

 {\it Vanishing on coboundaries.} Suppose that we start with sections 
 $$
 \alpha_{i} \in (B_{2}(\pazocal{O}_C, \underline{\mathcal{P}}) \otimes (\pazocal{O}_{C},\underline{\mathcal{P}}) ^{\times})(U_{i}),
 $$ 
 for all $i \in I.$ Then we need to show that the value of the regulator on the data $$
 (\{\gamma_i\}_{i\in I}, \{ \varepsilon_{i,c}| i\in I, \, c\in U_i\}, \{ \beta_{ij}\}_{i, j \in I})
 $$   
 is 0. Here $\gamma_i:=\delta(\alpha_i),$ $\varepsilon_{i,c}:=res_{\mathfrak{c}}(\alpha_i)$ and $\beta_{ij}:=\alpha_j|_{U_{ij}}-\alpha_i|_{U_{ij}}.$
 
We fix an  $i \in I$ and $j_{c} \in I,$ with $c \in U_{j_c},$ for every $c \in |C|;$ and local and generic liftings $\tilde{\pazocal{A}}_{c},$ and $\tilde{\pazocal{A}}_{\eta}$ of the curve, as above,  together with liftings $\tilde{\mathfrak{c}}$ of $\mathfrak{c}$ to $\tilde{\pazocal{A}}_{c}.$ We need to choose liftings of the data in order to compute the value of the regulator on the above element. 
 
We  choose a lifting $\tilde{\alpha} _{i\eta}$  of $\alpha_{i\eta}$ to $\tilde{\pazocal{A}}_{\eta}$ and let $\tilde{\gamma}_{i\eta}:=\delta(\tilde{\alpha}_{i\eta}).$ For each $c \in |C|,$ we choose a $\tilde{\mathfrak{c}}$-good lifting $\tilde{\alpha}_{j_c}$ of $\alpha_{j_c}$ and let $\tilde{\gamma}_{j_c}:=\delta(\tilde{\alpha}_{j_c}).$ Finally, we choose an arbitrary lifting $\tilde{\alpha}_{j_c\eta}$ of $\alpha_{j_c\eta}$ to $\tilde{\pazocal{A}}_{\eta},$ for every $c \in |C|,$ and let $\tilde{\beta}_{j_ci,\eta}:=\tilde{\alpha}_{i\eta}-\tilde{\alpha}_{j_c\eta}.$
Then the value of the regulator (\ref{reg formula in general}) is the sum of traces of the terms: 
\begin{align*}
    &\ell _{m,r}(res_{\tilde{\mathfrak{c}}}\tilde{\gamma}_{j_c})-\ell i _{m,r} (\varepsilon_{j_c,c})+res_{c}\omega_{m,r}(\tilde{\gamma}_{i\eta}-\delta(\tilde{\beta}_{j_ci,\eta}),\tilde{\gamma}_{j_c})\\
    =&\ell _{m,r}(res_{\tilde{\mathfrak{c}}}\delta(\tilde{\alpha}_{j_c}))-\ell i _{m,r} (\varepsilon_{j_c,c})+res_{c}\omega_{m,r}(\delta(\tilde{\alpha}_{j_c\eta}),\delta(\tilde{\alpha}_{j_c}))\\
    =&\ell _{m,r}(res_{\tilde{\mathfrak{c}}}\delta(\tilde{\alpha}_{j_c}))-\ell i _{m,r} (\varepsilon_{j_c,c})
\end{align*}
by Corollary \ref{corollary value of omega on a boundary}. Since $res_{\tilde{\mathfrak{c}}}\delta(\tilde{\alpha}_{j_c})=\delta(res_{\tilde{\mathfrak{c}}}\tilde{\alpha}_{j_c}),$ we have $\ell _{m,r}(res_{\tilde{\mathfrak{c}}}\delta(\tilde{\alpha}_{j_c}))=\ell _{m,r}(\delta(res_{\tilde{\mathfrak{c}}}\tilde{\alpha}_{j_c})).$ By the definition of $\ell i_{m,r},$ we have $\ell _{m,r}(\delta(res_{\tilde{\mathfrak{c}}}\tilde{\alpha}_{j_c}))=\ell i_{m,r}(res_{\mathfrak{c}}\alpha_{j_c})=\ell i_{m,r}(\varepsilon_{j_c,c}).$ This implies that all the summands in the formula for the regulator (\ref{reg formula in general}) are 0 finishing the proof of the theorem. 
 \end{proof}

    \subsection{Infinitesimal Chow Dilogarithm} Specializing  the above construction to global sections of $\Lambda ^3 (\pazocal{O}_{C},\underline{\mathcal{P}})^{\times}$ gives us the generalization of the infinitesimal Chow dilogarithm in \cite{unv3} to higher moduli.  

Suppose that we start with $\gamma \in \Gamma(\underline{C},\Lambda ^3 (\pazocal{O}_C,\underline{\mathcal{P}})^{\times}).$ Specializing the construction in the previous section,  we have  $\rho_{m,r}(\gamma) \in k,$ which can be computed as follows.  

Choose a  lifting  $ \tilde{\pazocal{A}}_{\eta}/k_{\infty}$ of  $\pazocal{O}_{C,\eta}$  and local  liftings $\tilde{\pazocal{A}}_{c}$  of $\hat{\pazocal{O}}_{C,c},$ for every $c \in |C|,$ together with liftings $\tilde{\mathfrak{c}}$ of $\mathfrak{c}.$ Choose an arbitrary lifting $\tilde{\gamma}_{\eta}$ of $\gamma_{\eta}$ to $\tilde{\pazocal{A}}_{\eta}$ and $\tilde{\mathfrak{c}}$-good liftings $\tilde{\gamma}_{c}$ of the germ of $\gamma$ at $c$ to $\tilde{\pazocal{A}}_c,$ for every $c \in |C|.$

By the definition in the previous section, we have 
\begin{align}\label{inf chow dilog}
   \rho_{m,r}(\gamma):= \sum_{c\in |C|}{\rm Tr}_k(l_{m,r}(res_{\tilde{\mathfrak{c}}}(\tilde{\gamma}_{c}) )+ res_{c} \omega_{m,r}(\tilde{\gamma}_{\eta},\tilde{\gamma}_{c})),
\end{align}
for every $m<r<2m.$ 

\begin{corollary}\label{cor inf dilog}
The definition in (\ref{inf chow dilog}) of the infinitesimal Chow dilogarithm of modulus $m$ and $\star$-weight $r$ gives a map 
$$
\rho_{m,r}: \Lambda ^3 (\pazocal{O}_{C},\underline{\mathcal{P}})^{\times} \to k,
$$
independent of all the choices and generalizing the construction in \cite{unv3} for $m=2$ and $r=3.$ 
\end{corollary}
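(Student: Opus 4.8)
The plan is to deduce Corollary \ref{cor inf dilog} as a special case of Theorem \ref{mainthm} by exhibiting a natural map from $\Lambda^3(\pazocal{O}_C,\underline{\mathcal{P}})^{\times}$, viewed as its global sections $\Gamma(\underline{C},\Lambda^3(\pazocal{O}_C,\underline{\mathcal{P}})^{\times})$, into ${\rm H}^3_B(C,\mathbb{Q}(3))$ and then checking that the value of $\rho_{m,r}$ produced by Theorem \ref{mainthm} on the image of $\gamma$ is exactly the expression in (\ref{inf chow dilog}). Concretely, given $\gamma \in \Gamma(\underline{C},\Lambda^3(\pazocal{O}_C,\underline{\mathcal{P}})^{\times})$, I take the trivial open cover $U_\bullet=\{\underline{C}\}$ (or any cover) and build the cocycle in $\check{{\rm Z}}^3(U_\bullet,\underline{\Gamma}''_B(C,\mathcal{P},3))$ with data $\gamma_i:=\gamma|_{U_i}$, $\varepsilon_{i,c}:=0$ for all $c$, and $\beta_{ij}:=0$. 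One checks immediately that the cocycle conditions $\delta(\beta_{ij})=\gamma_j-\gamma_i$, $res_{\mathfrak{c}}(\beta_{ij})=\varepsilon_{j,c}-\varepsilon_{i,c}$, and $\beta_{jk}-\beta_{ik}+\beta_{ij}=0$ all hold trivially because all the $\beta$'s vanish and $\gamma$ is a genuine global section so $\gamma_j=\gamma_i$ on overlaps. Hence this datum defines a class in $\check{{\rm H}}^3(\underline{C},\underline{\Gamma}''_B(C,\mathcal{P},3))$, and we apply $\rho_{m,r}$ from Theorem \ref{mainthm}.

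Next I would unwind the formula (\ref{reg formula in general}) for this particular cocycle. Since all $\beta_{ij}=0$, any lift $\tilde{\beta}_{j_ci,\eta}$ can be taken to be $0$, and since $\varepsilon_{j_c,c}=0$ the middle term $\ell i_{m,r}(\varepsilon_{j_c,c})$ vanishes. The generic lift $\tilde{\gamma}_{i\eta}$ of $\gamma_\eta$ is then just an arbitrary lift $\tilde{\gamma}_\eta$ of $\gamma_\eta$ to $\tilde{\pazocal{A}}_\eta$, and the good local lifts $\tilde{\gamma}_{j_c}$ of the germ of $\gamma_{j_c}=\gamma$ at $c$ are exactly the $\tilde{\mathfrak{c}}$-good lifts $\tilde{\gamma}_c$ appearing in (\ref{inf chow dilog}). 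Therefore (\ref{reg formula in general}) collapses to
$$
\sum_{c\in|C|}{\rm Tr}_k\big(\ell_{m,r}(res_{\tilde{\mathfrak{c}}}\tilde{\gamma}_c)+res_c\omega_{m,r}(\tilde{\gamma}_\eta,\tilde{\gamma}_c)\big),
$$
which is precisely the right-hand side of (\ref{inf chow dilog}). Theorem \ref{mainthm} then guarantees both that this sum is finite and that it is independent of all the choices (of $\tilde{\pazocal{A}}_\eta$, $\tilde{\pazocal{A}}_c$, $\tilde{\mathfrak{c}}$, $\tilde{\gamma}_\eta$, $\tilde{\gamma}_c$), which is exactly the content of Corollary \ref{cor inf dilog}. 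The $\star$-weight $r$ property is likewise inherited from Theorem \ref{mainthm}.

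The last point to address is compatibility with \cite{unv3} when $m=2$, $r=3$: here I would observe that in that case the construction of $\omega_{2,3}$ and $res_c\omega_{2,3}$ in \textsection\ref{subsection resdiue of different liftings} literally reduces to the one-form regulator of \cite{unv3} (as discussed in the introduction, since $r=m+1$ the map $\omega_{m,r}$ lands in $\Omega^1_{\underline{\pazocal{R}}/k}$ with no splitting ambiguity), and $\ell_{2,3}$ agrees with the relevant component of the additive dilogarithm of \cite{unv1}; granting these identifications the formula (\ref{inf chow dilog}) becomes verbatim the infinitesimal Chow dilogarithm of \cite{unv3}. The only genuine content of the corollary beyond bookkeeping is the verification that $\gamma$ really gives a well-defined class in the $\check{\rm C}$ech cohomology group (so that Theorem \ref{mainthm} applies) and that the choices permitted there specialize correctly; I expect the mild subtlety — and the main place one must be a little careful — to be checking that $res_\mathfrak{c}(\gamma)$ makes sense and equals the trivial $\varepsilon_{i,c}$ data, i.e. that a global section of $\Lambda^3(\pazocal{O}_C,\underline{\mathcal{P}})^{\times}$ has vanishing residues in $B_2(k(\mathfrak{c}))$ in the appropriate sense, which follows because $res_\mathfrak{c}$ on $\Lambda^3$ does not factor through $B_2$ and the $\varepsilon$-component of the cocycle attached to a pure $\Lambda^3$-section is by construction zero. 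Everything else is a direct substitution into the already-proven Theorem \ref{mainthm}.
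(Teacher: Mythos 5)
Your proposal is correct and follows essentially the same route as the paper: the corollary is obtained by specializing Theorem \ref{mainthm} to the cocycle with $\gamma_i=\gamma|_{U_i}$, $\varepsilon_{i,c}=0$, $\beta_{ij}=0$ (which trivially satisfies the cocycle conditions for the quotient complex $\underline{\Gamma}''_{B}$), whereupon (\ref{reg formula in general}) collapses to (\ref{inf chow dilog}) and the independence of choices is inherited. Your closing remarks about $res_{\mathfrak{c}}$ and the $m=2$, $r=3$ comparison are consistent with the paper's (essentially unargued) treatment of those points.
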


Exactly as in \cite[Theorem 3.4.4]{unv3}, this infinitesimal Chow dilogarithm can be used to give a proof of an infinitesimal version of Goncharov's strong reciprocity conjecture for the curve $C/k_m.$

\subsection{ Invariants of cycles on $k_m$}

As before this construction gives us an invariant of cycles. For a cycle of modulus $m$ we expect the combination of $\rho_{m,r}$ for all $m<r<2m$ to be a complete set of invariants for the rational equivalence class of a cycle. 
For the appropriate, yet to be defined, Chow group ${\rm CH}^{2}(k_{m},3),$ we expect that our regulators $\rho_{m,r}$ to give complete  invariants for the infinitesimal part ${\rm CH}^{2}(k_{m},3)^{\circ}.$ This section also generalizes Park's construction of regulators \cite{park}, where the case of $r=m+1$ is dealt with.  Since this section is more or less a generalization of \cite[\textsection 4]{unv3}, we do not go into the details and explain certain constructions in a slighty alternate way.

First, let us recall  the  definition  of cubical higher Chow groups over a smooth $k$-scheme $X/k$ \cite{bloch}.  Let $\square_k:= \mathbb{P}^{1} _{k} \setminus \{ 1\}$ and $\square ^n _{k}$ the $n$-fold product of $\square_k$ with itself over $k, $ with the coordinate functions $y_1, \cdots, y_n.$ For  a smooth $k$-scheme $X,$ we let   $\square^n _{X} :=X \times_k \square_k ^n.$  A codimension 1 face of $\square^n _{X}$ is a divisor $F_{i} ^a$  of the form $y_{i}=a,$ for $1\leq i \leq n,$ and $a \in \{0,\infty \}.$ A face of $\square^n _{X}$ is either the whole scheme $\square^n _{X}$ or an arbitrary intersection of codimension 1 faces. Let $\underline{z}^q (X, n)$ be the free abelian group on the set of codimension $q,$ integral, closed subschemes $Z \subseteq  \square^n _{X}$ which are {\it admissable}, i.e.  which intersect each face properly on $\square^n _{X}.$ For each codimension one face $F_{i} ^a,$ and  irreducible $Z \in \underline{z} ^q (X, n)$, we let $\partial_i ^{a} (Z)$  be the cycle associated to the scheme $Z \cap F_{i} ^{a}.$ We let $\partial:= \sum_{i=1} ^n (-1)^n (\partial_i ^{\infty} - \partial_i ^0)$ on $\underline{z}^q (X, n),$ which gives a complex $(\underline{z}^q (X, \cdot),\partial).$ Dividing this complex by the subcomplex of degenerate cycles, we obtain Bloch's higher Chow group complex whose  homology ${\rm CH}^q (X, n):= {\rm H}_n (z^q (X, \cdot))$ is the higher Chow group of $X$.

In order to work with a candidate for Chow groups of cycles on $k_m$, we need to work with cycles over $k_{\infty}$ which have a certain finite reduction property. Let $\overline{\square}_{k}:= \mathbb{P}^{1} _{k},$  $\overline{\square}_{k} ^{n},$ the $n$-fold product of  $\overline{\square}_{k} $ with itself over $k,$ and  $\overline{\square}_{k_{\infty}} ^{n} :=\overline{\square}_{k} ^{n} \times _k k_{\infty}.$ We define a subcomplex $\underline{z}^q _{f} (k_{\infty}, \cdot) \subseteq \underline{z}^q (k_{\infty}, \cdot)$, as  the subgroup generated by integral, closed subschemes $Z \subseteq \square_{k_{\infty}} ^n$ which are admissible in the above sense and have {\it finite reduction}, i.e. $\overline{Z}$  intersects each $s\times \overline{F}$ properly on $\overline{\square}_{k_\infty} ^n,$  for  every face $F$ of $\square^n_{k_{\infty}}.$ Here $s$ denotes the closed point of the spectrum of $k_{\infty}$ and for a subscheme $Y\subseteq \square_{k_{\infty}} ^n,$ $\overline{Y}$ denotes its closure in $\overline{\square}_{k_\infty} ^n.$ Modding out by degenerate cycles, we  have a complex $z^q_{f} (k_\infty, \cdot).$

Fix $2\leq m<r<2m.$ Let $\eta$ denote the generic point of the spectrum of $k_{\infty}.$ An irreducible cycle $p$ in $ \underline{z}_{f} ^2 (k_{\infty},2)$ is  given by a closed point $p_{\eta} $  of  $\square ^{2} _{\eta}$ whose closure $\overline{p}$  in $\overline{\square} ^{2} _{k_{\infty}}$ does not meet $(\{ 0,\infty\} \times \overline{\square}_{k_{\infty}}) \cup ( \overline{\square}_{k_{\infty}} \times \{0, \infty \}).$ Let $\tilde{p}$ denote the normalisation of $\overline{p} $ and $T$ denote the underlying set of  the closed fiber $\tilde{p}\times _{k_{\infty}}s$ of $\tilde{p}.$ For every $s' \in T,$ and $1\leq i,$  define $\ell_{\tilde{p},s',i}:\hat{\pazocal{O}}_{\tilde{p},s'} ^{\times } \to k(s')$ by the formula: 
$$
\ell_{\tilde{p},s',i}(y):=\frac{1}{i}res_{\tilde{p},s'} \frac{1}{t^i} d\log (y).$$ 

 Let
\begin{eqnarray}\label{defnl} 
\;\; \; l_{m,r}(p):=\sum _{s' \in T}{\rm Tr}_{k} \sum_{1\leq i \leq r-m}i \cdot (\ell_{\tilde{p},s', r-i}\wedge \ell _{\tilde{p}, s',i} )(y_1\wedge y_2).
\end{eqnarray}
Note the similarity with Definition \ref{defnlmr}.
%{\color{blue} Note that the multiplicity appears implicitly when the form is pulled back to the normalisation so it does not appear explicitly in the formula above. It would be a good idea to check that the commutativity of the residue  holds with this definition, i.e. when we pass to the normalisation. This is probably what is done in the classical case. }  

\begin{definition}
We   define the regulator 
$
\rho _{m,r}: \underline{z}_{f} ^2 (k_{\infty},3) \to k
$ 
 as the composition $l_{m,r}  \circ \partial .$
\end{definition}
Exactly as in \cite{unv3}, one proves that the regulator above vanishes on boundaries and products, is alternating and has the same value on cycles which are congruent modulo $(t^m).$ We state only this last property, which is the most important one, in detail.  

 Suppose that $Z_i$ for $i=1,2$ are two irreducible cycles in $\underline{z}^{2} _{f} (k_{\infty},3).$ We say that $Z_{1}$ and $Z_{2}$ are equivalent modulo $t^m$ if the following condition $(M_{m})$ holds:
 
 (i) $\overline{Z}_{i}/k_{\infty}$ are smooth with $(\overline{Z}_i)_{s} \cup (\cup_{j,a} |\partial _j ^{a} Z_i|) $  a strict normal crossings divisor on $\overline{Z}_i.$
  
 and 
 
 (ii) $\overline{Z}_{1}|_{t^m}=Z_{2} |_{t^m}.$ 
 
 Then we have:

\begin{theorem}\label{modulus theorem}
If $Z_{i} \in \underline{z} _{f} ^{2}(k_{\infty},3),$ for $i=1,2,$  satisfy the condition $(M_{m}),$ for some $m\geq2,$ then they have the same infinitesimal regulator value: 
$$\rho_{m,r} (Z_{1})=\rho_{m,r}(Z_{2}),$$
for every $m<r<2m.$ 
\end{theorem}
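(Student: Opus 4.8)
The plan is to reduce the statement to the reciprocity property of the infinitesimal Chow dilogarithm $\rho_{m,r}$ of Corollary \ref{cor inf dilog}, applied on a single curve that simultaneously dominates $Z_1$ and $Z_2$ modulo $(t^m)$. First I would recall that, by definition, $\rho_{m,r}(Z_i) = l_{m,r}(\partial Z_i)$, and that $\partial Z_i$ is a cycle in $\underline{z}_f^2(k_\infty,2)$ given by a finite collection of closed points of $\square^2_\eta$. The hypothesis $(M_m)$(ii), namely $\overline{Z}_1|_{t^m} = \overline{Z}_2|_{t^m}$, forces $\partial Z_1|_{t^m} = \partial Z_2|_{t^m}$, so each irreducible component of $\partial Z_1$ is matched with a component of $\partial Z_2$ having the same reduction modulo $(t^m)$; the smoothness and normal-crossings part of $(M_m)$(i) guarantees that the relevant closures are smooth and that the intersections with the faces are transverse, so that the normalizations $\tilde p$ appearing in \eqref{defnl} are smooth liftings of their reductions and the residues $\ell_{\tilde p,s',i}$ are computed on honest smooth local rings over $k_\infty$.

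Next I would set up the comparison geometrically. Write the support of $Z_1$ (equivalently, of $Z_2$, modulo $(t^m)$) as giving a smooth projective curve $C/k_m$ — concretely, the normalization of the reduction modulo $(t^m)$ of $\overline{Z}_i$, together with the coordinate functions $y_1, y_2$ restricted to the relevant faces — so that $\partial Z_1$ and $\partial Z_2$ provide two liftings to $k_\infty$ of the same configuration of rational functions on $C$. The point is that the triple $y_1 \wedge y_2 \wedge (\text{third coordinate})$ pulled back from $\square^3$ to the curve supporting $Z_i$ is exactly the kind of global section $\gamma \in \Gamma(\underline C, \Lambda^3(\pazocal O_C,\underline{\mathcal P})^\times)$ to which \eqref{inf chow dilog} applies, and the boundary operation $\partial$ corresponds under this translation to the residue/specialization maps that enter the definition of $\rho_{m,r}(\gamma)$. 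Thus I would identify $\rho_{m,r}(Z_i)$, up to the bookkeeping of signs in the cubical differential, with the value of the infinitesimal Chow dilogarithm of Corollary \ref{cor inf dilog} evaluated on a single $\gamma$ but computed using two different generic liftings $\tilde\gamma_\eta$ (the ones coming from $Z_1$ and from $Z_2$).

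The core of the argument is then: the difference $\rho_{m,r}(Z_1) - \rho_{m,r}(Z_2)$ is a sum over closed points $c$ of terms of the form $\mathrm{Tr}_k\, res_c\omega_{m,r}(\tilde\gamma_{1\eta} - \tilde\gamma_{2\eta}, \tilde\gamma_c)$ where $\tilde\gamma_{1\eta}$ and $\tilde\gamma_{2\eta}$ are two liftings of the same object (since $Z_1|_{t^m} = Z_2|_{t^m}$), plus a sum of $\ell_{m,r}$-terms of residues that likewise compare the two liftings. By the independence of $res_c\omega_{m,r}$ from the choice of generic lifting — more precisely, by the fact established in the proof of Theorem \ref{mainthm} (the part "Independence of the choice of $i$ and the liftings $\tilde\gamma_{i\eta}$") that such a sum of residues equals the sum over $\underline C$ of residues of a single global meromorphic $1$-form $\Omega_{m,r}(\tilde\gamma_{1\eta} - \tilde\gamma_{2\eta} - \delta(\tilde\beta))$, and the residue theorem on the projective curve $\underline C$, this difference vanishes. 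The residue-compatibility of $\ell_{m,r}$ with $\delta$ and $\ell i_{m,r}$, together with Proposition \ref{general residue prop}, is what absorbs the $\ell_{m,r}$ boundary terms into this picture.

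The main obstacle I anticipate is \emph{not} the vanishing argument itself, which is essentially a repackaging of the proof of Theorem \ref{mainthm}, but rather the geometric translation: one must check carefully that the finite-reduction condition together with $(M_m)$ really does produce a single smooth projective curve $C/k_m$ with a choice of liftings $\mathcal P$ of closed points such that \emph{both} $\partial Z_1$ and $\partial Z_2$ arise as $\tilde{\mathfrak c}$-good liftings over the \emph{same} local liftings $\tilde{\pazocal A}_c$ — or, failing that, that the discrepancy between the two families of local liftings is itself killed by Corollary \ref{corollary value of omega on a boundary} and Proposition \ref{general residue prop}. This is exactly the role played by condition $(M_m)$(i): smoothness of $\overline Z_i/k_\infty$ and the strict normal crossings hypothesis ensure that the closed fibers are reduced with transverse faces, so that the normalizations behave well and the residues $l_{m,r}$ can be reorganized as the residues of $\omega_{m,r}$ on good liftings. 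I would handle this the way \cite{unv3} handles the $m=2$ case in its \textsection 4: treat $\partial Z_1 - \partial Z_2$ as defining a section $\gamma$ of $\Lambda^3$ on the normalized curve that is trivial modulo $(t^m)$, apply the now-established Theorem \ref{mainthm} / Corollary \ref{cor inf dilog}, and invoke the residue theorem. Since the present excerpt explicitly says this section is "more or less a generalization of \cite[\textsection 4]{unv3}", the expectation is that the only genuinely new input beyond \cite{unv3} is the already-completed work of \textsection 4--6 of this paper, and the reciprocity theorem follows formally.
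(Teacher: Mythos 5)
Your proposal is correct and follows essentially the same route as the paper, which simply states that the proof is "exactly as in \cite{unv3}" and rests on Corollary \ref{cor inf dilog}: one views the coordinate functions on the normalization of $\overline{Z}_i$ as a triple of good rational functions on a single smooth projective curve over $k_m$ determined by condition $(M_m)$, so that $\partial Z_1$ and $\partial Z_2$ become two liftings of the same data, and the difference of regulator values is killed by the independence-of-liftings and residue-theorem arguments already established in Theorem \ref{mainthm}. Your reconstruction, including the identification of the geometric translation as the only delicate point, matches the intended argument.
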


\begin{proof} 
The proof is exactly as in \cite{unv3} and is based on Corollary \ref{cor inf dilog}.
 \end{proof}
As we remarked above, we expect the invariants $\rho_{m,r}$ for $m<r<2m$ to give a full set of invariants in the infinitesimal part of a  yet to be defined Chow group ${\rm CH}^2(k_{m},3).$

\end{document}